\newtheorem{thm}{Theorem}[section]
\newaliascnt{theo}{thm}
\newaliascnt{cor}{thm}
\newtheorem{cor}[cor]{Corollary}
\newaliascnt{prop}{thm}
\newtheorem{prop}[prop]{Proposition}
\newaliascnt{lem}{thm}
\newtheorem{lem}[lem]{Lemma}
\newaliascnt{conj}{thm}
\newaliascnt{que}{thm}
\newaliascnt{ass}{thm}
\newaliascnt{defn}{thm}
\newtheorem{defn}[defn]{Definition}
\theoremstyle{remark}
\newaliascnt{rem}{thm}
\newtheorem{rem}[rem]{Remark}
\theoremstyle{definition}
\newtheorem{exmp}[thm]{Example}
\newtheorem{notation}[thm]{Notation}
\newcommand{\C}{\mathbb{C}\xspace}
\newcommand{\Z}{\mathbb{Z}\xspace}
\newcommand{\Q}{\mathbb{Q}\xspace}
\newcommand{\G}{\mathbb{G}\xspace}
\newcommand{\ch}{\text{CH}}
\DeclareMathOperator{\Spec}{Spec}
\DeclareMathOperator{\res}{res}
\DeclareMathOperator{\Tr}{Tr}
\DeclareMathOperator{\ord}{ord}
\DeclareMathOperator{\dv}{div}
\DeclareMathOperator{\alb}{alb}
\DeclareMathOperator{\Pic}{Pic}
\DeclareMathOperator{\Alb}{Alb}
\let\c@equation\c@theo
\numberwithin{equation}{section}
\title{A Filtration of the Chow Group of Zero-Cycles for a Product of Curves and an Abelian Variety}
\author{Thomas Jaklitsch$^*$}\thanks{\noindent(*) Department of Mathematics, University of Virginia, 119 Kerchof Hall, 141 Cabell Dr., Charlottesville, VA, 22903, USA. Email: gvs3ka@virginia.edu}
\begin{document}

\maketitle
\begin{abstract}
    In this paper we define a descending filtration on the Chow group of zero cycles for varieties of the form $A \times C_1 \times \cdots \times C_d$ where $A$ is an abelian variety and each $C_i$ is a smooth projective curve. We give explicit generators and relations for the successive quotients of this filtration by showing that they can be described by Somekawa K-groups. This extends the work of Raskind and Spiess who proved this result for products of curves and Gazaki who proved this for abelian varieties. 
\end{abstract}

\section{Introduction}
Let $X$ be a smooth projective variety over a field $k$ such that $X(k) \ne \emptyset$, and let $\ch_0(X)$ denote the group of zero cycles modulo rational equivalence. There is a well-defined degree map 
\[
    \text{deg}: \ch_0(X) \to \Z
\]
and we denote the kernel by $A_0(X)$. When $X$ is a curve, $\ch_0(X)$ coincides with the Picard group $\text{Pic}(X)$, and the Abel-Jacobi map
\[
    A_0(X) \to \text{Jac}_X(k)
\]
is an isomorphism. In higher dimensions, however, the situation is more complicated. Like in the case of curves, there is an abelian variety $\Alb_X$, the Albanese variety of $X$, which is universal with respect to maps from $X$ to abelian varieties and a homomorphism 
\[
    \alb_X: A_0(X) \to \Alb_X(k).
\]
Unlike the Abel-Jacobi map for curves, however, this map is very far from being an isomorphism in general. In particular, the kernel of this map can be quite large. Thus, we now have a filtration 
\[
    \ker(\alb_X) \subset A_0(X) \subset \ch_0(X)
\]
where the first successive quotient is isomorphic to $\Z$ and the second is isomorphic to a subgroup of the $k$-points of an abelian variety. Over many fields of interest, the structure of the group of rational points on an Abelian variety is well understood, so the main obstruction to our understanding of $\ch_0(X)$ is in the kernel of the Albanese map $\alb_X$. 

The structure of the Albanese kernel is very sensitive to changes in the base field. For example, when $k = \C$ and $X$ is a surface with positive geometric genus, $\ker(\alb_X)$ is extremely large \cite{bloch1975k2}, \cite{mumford1969rational}, and it is not parameterized by the points of a variety. On the other hand, famous conjectures of Beilinson \cite{beilinson2006height} (conjecture 5.0) and Bloch \cite{bloch1984algebraic} (page 94) predict that $\ker(\alb_X)$ is finite for every smooth projective variety $X$ defined over $\Q$. The variance of the structure of the Albanese kernel as the base field changes illustrates the difficulty of studying this group. Indeed, there are very few known examples of the Beilinson-Bloch conjecture for surfaces defined over $\Q$. 

One way to make the problem of studying the Albanese kernel more tractable is by extending the natural filtration on $\ch_0(X)$ and identifying the successive quotients with groups which are more computable. In this paper, we construct a descending integral filtration 
\[
    \ldots F^r\ch_0(X) \subset \ldots \subset F^1\ch_0(X) \subset F^0\ch_0(X) = \ch_0(X)
\]
on the Chow group of zero-cycles when $X \cong C_1\times \ldots C_d\times A$ where $A$ is an abelian variety, and $C_1, \ldots, C_d$ are smooth, projective, geometrically integral curves with a $k$-rational point. We relate the successive quotients of this filtration to certain $K$-groups as studied by Somekawa in \cite{somekawa1990milnor}.

More precisely, Somekawa defines the groups $K(k; G_1,\ldots, G_r)$ for semi-abelian varieties $G_1, \ldots, G_r$. These groups are a generalization of Milnor $K$-groups in the sense that when $G_i = \G_m$ for all $i$, $K(k; G_1, \ldots, G_r) \cong K^M_r(k)$. We define a natural quotient of the Somekawa $K$-groups and prove they are isomorphic to the successive quotients of our filtration up to bounded torsion. Our main result is the following. 
\begin{thm}\label{thm: main}
    Let $A$ be an abelian variety and let $C_1, \ldots, C_d$ be smooth projective geometrically irreducible curves such that $C(k) \ne \emptyset$. Then there exists an isomorphism
    \[
        \mathbb{Z}[1/r!]\otimes \frac{F^r(C_1\times \ldots C_d\times A)}{F^{r+1}(C_1\times \ldots C_d\times A)} \xrightarrow{\Z[1/r!]\otimes \Phi_r} \mathbb{Z}[1/r!]\otimes S_r(k; \underline{J_1}\times \cdots \times \underline{J_d}\times A).
    \]
\end{thm}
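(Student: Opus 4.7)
The plan is to follow the strategy of Raskind--Spiess and Gazaki, building their constructions into a hybrid argument adapted to the mixed product $X = C_1 \times \cdots \times C_d \times A$. I would first fix base points $x_i \in C_i(k)$, inducing embeddings $\iota_i : C_i \hookrightarrow J_i$ with $\iota_i(x_i) = 0$, and use the universal property of the Albanese to get a canonical decomposition $\Alb_X \cong J_1 \times \cdots \times J_d \times A$. The filtration $F^r$ should be defined iteratively via the ``sum map'' on $\Alb_X$: for each closed point $P \in X$, one writes $P$ as a sum of ``translates'' and defines $F^{r+1}$ to be generated by cycles which become trivial after $r$ iterated applications of the difference map. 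On generators of the form $[P] - [\text{base}]$ with $P$ defined over a finite extension $L/k$, the candidate map $\Phi_r$ is the obvious one, sending such a cycle to the Somekawa symbol $\{\iota_1(p_1) - \iota_1(x_1), \ldots, \iota_d(p_d) - \iota_d(x_d), a\}_{L/k}$.

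The bulk of the argument consists of verifying well-definedness in both directions. In one direction, I would check that the Somekawa relations (the projection formula and the Weil reciprocity / function-field relation) hold modulo $F^{r+1}$, by pulling back a rational function $f$ on a curve $Y \to X$ and unpacking its divisor via the standard manipulation of symbols along $Y$; this should reduce after some algebra to the Somekawa relations exactly as in \cite{raskind2000milnor} and Gazaki's work. In the other direction, I would construct an inverse $\Psi_r$ on symbols by choosing, for each $L$-point $g_i$ of $J_i$, a preimage as a degree-zero divisor on $C_i$ of bounded complexity, then forming the external product; the key point is that for the abelian variety factor $A$, symbols involving $a \in A(L)$ must be related back to cycles on $X$ by sending $a$ to a diagonal-type cycle on $A^{\times r}$ in the spirit of Gazaki's lifting.

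Proving that $\Phi_r \circ \Psi_r$ and $\Psi_r \circ \Phi_r$ both equal multiplication by some divisor of $r!$ is where the $\Z[1/r!]$ enters. The natural maps involve symmetrizations over the $r$ factors in the Somekawa symbol, and one obtains a factor of $r!$ because the filtration naturally lives on the ``symmetric tensor'' side of the sum map, whereas the Somekawa symbol is multi-linear and alternating only by imposition of the defining relations. I expect the main technical obstacle to be the compatibility check between the iterated difference map defining $F^r$ and the formal symbol manipulations in $S_r$: the proof requires carefully tracking how mixed symbols involving both $J_i$'s and $A$ transform, since the curves appear only through $\iota_i$ but the Somekawa symbol treats them on the same footing as $A$. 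My plan would be to handle the ``pure abelian'' part by direct citation of Gazaki and the ``pure curve'' part by direct citation of Raskind--Spiess, and then prove only the mixed compatibility by an explicit computation in which a symbol of mixed type is reduced using a rational function on a suitable auxiliary curve inside $J_1 \times \cdots \times J_d \times A$.

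Finally, once the maps are constructed and shown to be mutually inverse up to $r!$, the theorem follows by inverting $r!$ throughout. The torsion coming from $r!$ is unavoidable in this framework because the Somekawa relations encode alternating/multilinear behavior that the filtration encodes only symmetrically; it would be interesting to try to sharpen the result, but for the main theorem as stated, inverting $r!$ suffices and is the cleanest way to package the bounded torsion.
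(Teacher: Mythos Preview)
Your sketch has the right overall shape---build $\Phi_r$ and a putative inverse $\Psi_r$, then show the compositions are multiplication by $r!$---but several concrete points are off in ways that would derail the argument. First, your description of $\Phi_r$ produces a symbol of length $d+1$ (one entry per curve plus one for $A$), whereas $S_r$ consists of symbols of length $r$; the paper's $\Phi_r$ sends a closed point $z$ to the $r$-fold \emph{diagonal} symbol $\{\varphi_X(z),\ldots,\varphi_X(z)\}_{k(z)/k}$ in $S_r(k;\Alb_X)$, then projects to the quotient $S_r(k;\underline{J_1}\times\cdots\times\underline{J_d}\times A)$. That underline is not decorative: it means one keeps only the direct summands of $S_r(k;J_1\times\cdots\times J_d\times A)$ in which each Jacobian occurs at most once, reflecting that each $C_i$ is a curve. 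Your plan never mentions this quotient, and without it the map $\Psi_r$ you propose cannot be well defined (two copies of the same $J_i$ would force an intersection of two divisors on a curve). Also, the symbols in $S_r$ are \emph{symmetric}, not alternating; the $r!$ arises from symmetrization, not antisymmetrization.

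More structurally, your proposed definition of the filtration via iterated difference maps is not the one the paper uses, and this matters. The paper sets $F^r := \bigcap_{j<r}\ker\Phi_j$, which makes $\Phi_r$ \emph{automatically injective} on $F^r/F^{r+1}$. The entire proof then reduces to the single computation $\Phi_r\circ\Psi_r' = r!$ (Corollary~\ref{cor: phi circ psi}); from this, the well-definedness of $\Psi_r$ on $S_r$ follows for free (if $x$ maps to zero in $S_r$ then $\Phi_r(\Psi_r'(x))=0$, hence $\Psi_r'(x)\in F^{r+1}$), and surjectivity after inverting $r!$ is immediate. Your plan instead requires directly verifying that $\Psi_r$ respects the projection formula and Weil reciprocity modulo $F^{r+1}$, and separately that $\Psi_r\circ\Phi_r$ is multiplication by $r!$. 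The latter would need $F^r$ to be generated by the image of $\Psi_r'$, which the paper calls $F^r_\Psi$; but the paper only proves $F^r_\Psi = F^r$ \emph{rationally} (Section~\ref{section: finiteness of filt}), so your route would likely yield only a $\mathbb{Q}$-statement rather than the sharper $\mathbb{Z}[1/r!]$-isomorphism.
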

Here $S_r(k; \underline{J_1}\times \cdots \times \underline{J_d}\times A)$ is a quotient of a Somekawa $K$-group as described above (see Definition \ref{def: main K-group def}). The map $\Phi_r$ is defined in section \ref{section: Filtration}. Importantly, this filtration extends the natural filtration of $\ch_0(X)$ in the sense that $F^1\ch_0(X) = A_0(X)$ and $F^2\ch_0(X) = \ker(\alb_X)$. One of the main utilities of this is that the $K$-groups give explicit generators and relations for the successive quotients, which aids in the study of the Albanese kernel of $X$. To illustrate this point, we provide some computations in section \ref{section: Example computation} to show how this filtration might be used to study the Albanese kernel in a special case. Moreover, our filtration is constructed over any perfect field and is integral. These properties are essential for the filtration to be useful in studying questions of arithmetic interest. 

Such a filtration has been constructed in the case when $X$ is a product of curves by Raskind and Spiess in \cite{raskind2000milnor}. It has also been constructed in the case when $X$ is an abelian variety by Gazaki in \cite{gazaki2015filtration}. Kakinoki extends the results of Gazaki in \cite{kakinoki2020filtration} to construct a filtration for all Chow groups $\ch^{g+s}(A, s)$ where $A$ is an abelian variety, $g = \text{dim}(A)$, and $\ch^{g+s}(A,s)$ denote Bloch's higher Chow groups \cite{bloch1986algebraic}.

In fact, it is expected that similar filtrations of $\ch_0(X)$ should exist for any smooth projective variety $X$ over $k$. More precisely, a conjecture of Beilinson predicts that there is an abelian category $\mathcal{MM}_k$ (the category of mixed motives over $k$) and a spectral sequence 
\[
    E^{\nu,\mu}_2 = \text{Ext}^\nu_{\mathcal{MM}_k}(1, h^\mu(X)(m))\implies \ch^m(X, 2m-(\nu+\mu)).
\]
In general, this spectral sequence is expected to degenerate after tensoring with $\Q$, so we would obtain a filtration on $\ch_0(X)$ with the property that after tensoring with $\Q$ the $r$th successive quotient is isomorphic to the Ext group
\[
    \text{Ext}_{\mathcal{MM}_k}^r(1, h^{2d-r}(X)(d))
\]
where $d$ is the dimension of $X$ (for more information on this conjecture of Beilinson and its applications see \cite{jannsen1994motivic}). While this conjecture is far out of reach in general, the work of Raskind and Spiess in \cite{raskind2000milnor} provides strong evidence that the filtration they construct for a product of curves is the one arising from the above conjecture of Beilinson for the following reason. In \cite{somekawa1990milnor}, Somekawa predicts that the $K$-groups he studies in that paper are isomorphic to certain $\text{Ext}$ groups in the conjectural category of mixed motives. Assuming this conjecture, Raskind and Spiess show that their filtration has the correct motivic interpretation (see Remarks 2.4.2 of \cite{raskind2000milnor}). Given the fact that the category of mixed motives over $k$ has not been established, Somekawa's expectation cannot be verified. However, Kahn and Yamazaki in \cite{kahn2013voevodsky} prove "the closest approximation to Somekawa's conjecture" by showing that 
\[
    K(k; G_1, \ldots, G_r) \cong \text{Hom}_{\textbf{DM}^\text{eff}_{-}}(\Z, G_1[0]\otimes \cdots \otimes G_r[0])
\]
where $\textbf{DM}^\text{eff}_{-}$ is Voevodsky's triangulated category of effective motivic complexes\footnote{In fact they show this isomorphism more generally for homotopy invariant Nisnevich sheaves with transfers}. Our filtration is an extension of the one constructed by Raskind and Spiess and the successive quotients are described by quotients of Somekawa $K$-groups. Therefore, the remarks above suggest that our filtration is closely related to the one that would arise from Beilinson's conjecture.

Another indication that our filtration is the one predicted by Beilinson is given by the following vanishing theorem.
\begin{thm}\label{thm: vanishing}
    For $r > \text{dim}(C_1\times\cdots \times C_d \times A)$, 
    \[
        F^r\ch_0(C_1\times\cdots \times C_d\times A)\otimes \Q = 0. 
    \]
\end{thm}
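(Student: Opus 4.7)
The plan is to reduce the vanishing to a statement about Somekawa K-groups via Theorem~\ref{thm: main}, and then deduce the latter from Gazaki's vanishing theorem for abelian varieties.

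By Theorem~\ref{thm: main}, for every $r\geq 0$,
\[
    \frac{F^r \ch_0(X)}{F^{r+1} \ch_0(X)} \otimes \Q \;\cong\; S_r\bigl(k;\underline{J_1}\times\cdots\times\underline{J_d}\times A\bigr) \otimes \Q.
\]
Once we prove $S_r \otimes \Q = 0$ for every $r > \dim X$, we obtain $F^r \ch_0(X) \otimes \Q = F^{r+1} \ch_0(X) \otimes \Q = \cdots$ for all such $r$. The construction of the filtration in Section~\ref{section: Filtration} presents $F^r \ch_0(X)$ as built from images of length-$r$ Somekawa symbols, so this stabilized value must itself be zero, and a descending induction then gives $F^r \ch_0(X) \otimes \Q = 0$ for every $r > \dim X$.

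To prove $S_r \otimes \Q = 0$ for $r > \dim X = d + \dim A$, observe that $S_r$ is assembled from Somekawa K-groups of the form $K(k; J_{i_1},\ldots,J_{i_s},A,\ldots,A)$ with $r - s$ copies of $A$; since each Jacobian $J_i$ appears at most once, we have $s \leq d$ and hence $r - s \geq r - d > \dim A$, so strictly more than $\dim A$ copies of $A$ appear in each summand. Gazaki's main theorem in \cite{gazaki2015filtration} establishes that for an abelian variety $A$ of dimension $g$, the Somekawa K-group on $r$ copies of $A$ vanishes rationally for $r > g$; the key input is the Beauville decomposition $\ch_0(A)\otimes\Q = \bigoplus_{j=0}^g \ch_0(A)^{(j)}$ together with the Pontryagin product, the upshot being that the top Beauville weight on $\ch_0(A)$ is $\dim A$. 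Applying this eigenspace argument to the $A$-factors of each summand of $S_r$ yields the vanishing.

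The main obstacle is extending Gazaki's purely abelian-variety argument to the present mixed setting in which curve Jacobian factors coexist with the $A$-factors. One must verify that the Beauville projectors acting on the $A$-slots commute with the defining Somekawa relations, so that the piece of $K(k;J_{i_1},\ldots,J_{i_s},A,\ldots,A)\otimes \Q$ on which the $A$-factors carry total weight exceeding $\dim A$ can be isolated and shown to vanish. This compatibility can be checked either directly from multilinearity and the Weil reciprocity/projection relations defining the Somekawa K-group, or via the motivic interpretation of Kahn--Yamazaki \cite{kahn2013voevodsky} in $\mathbf{DM}^{\text{eff}}_-$ together with the Chow--K\"unneth decomposition of the motive of $A$, either of which isolates the offending weight piece and forces it to be zero.
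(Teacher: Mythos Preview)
Your argument has a genuine gap at the ``descending induction'' step. From $S_r\otimes\Q=0$ for all $r>\dim X$ you correctly deduce that $F^r\ch_0(X)\otimes\Q=F^{r+1}\ch_0(X)\otimes\Q=\cdots$, but you then assert that this common value is zero because ``the construction of the filtration in Section~\ref{section: Filtration} presents $F^r\ch_0(X)$ as built from images of length-$r$ Somekawa symbols.'' That is not how the filtration is defined: $F^r\ch_0(X)=\bigcap_{j<r}\ker\Phi_j$ is an intersection of kernels, not a span of images. Nothing in your outline rules out the possibility that $\bigcap_{r}F^r\ch_0(X)\otimes\Q$ is nonzero, and without that the stabilization tells you nothing.

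This is precisely the difficulty the paper's proof is designed to overcome. The paper introduces an auxiliary filtration $F^r_\Psi\ch_0(X)$ which \emph{is} generated by images of the maps $\Psi_j'$ for $j\ge r$; for this filtration the vanishing $F^r_\Psi\otimes\Q=0$ for $r>\dim X$ is immediate from the Bloch--Beauville--Deninger--Murre vanishing of high Pontryagin powers in $\ch_0(A)\otimes\Q$. The substantial work is then Proposition~\ref{prop: Fr and FrPsi agree}, showing $F^r\otimes\Q=F^r_\Psi\otimes\Q$: first over an algebraically closed field (by building a map $\overline\Psi_r$ on the K-group side, checking Weil reciprocity directly, and running an induction on $r$), then over general $k$ by a trace/restriction argument. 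Your step~2 is also only sketched; the claim that the mixed K-groups $K(k;J_{i_1},\ldots,J_{i_s},A,\ldots,A)\otimes\Q$ vanish when more than $\dim A$ copies of $A$ appear is plausible but not established in the references you cite, and making it precise would require work comparable to what the paper does via $F^r_\Psi$.
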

\noindent Such a property is expected for motivic filtrations. Namely, the filtration should vanish rationally after the dimension of the variety. Thus, the above theorem provides more evidence that our filtration is the expected one coming from the conjectural category of mixed motives. Moreover, the rational finiteness of our filtration plays a major role in computing generators and relations of the Albanese kernel as demonstrated by the example in Section \ref{section: Example computation}.

\subsection{Outline}
This paper follows the structure of \cite{gazaki2015filtration}. In Section 2 we recall some definitions regarding Chow groups and Somekawa $K$-groups which are the main objects of interest. In section 3 for $X:= C_1\times\cdots \times C_d\times A$, we define the map 
\[
    \Phi_r: \ch_0(X) \to S_r(k; \underline{J_1}\times \cdots \times \underline{J_d}\times A)
\]
which appears in the statement of Theorem \ref{thm: main} and use it to construct the filtration $F^\bullet\ch_0(X)$. We also verify that our filtration extends the natural one of $\ch_0(X)$. In Section 4 we prove the main theorem by defining a map $\Psi_r$ from $S_r(k; \underline{J_1}\times \cdots \times \underline{J_d}\times A)$ to the successive quotients of the filtration and verifying that this is an isomorphism after tensoring with $\Z[1/r!]$. In Section 5 we prove Theorem \ref{thm: vanishing}, and in Section 6 we give an example of how the filtration can be used to study the Albanese kernel of $\ch_0(C\times A)$ where $C$ is a hyperelliptic curve and $A$ is its Jacobian. 

\subsection{Notation}\label{notation: cycle class notation}
Throughout this paper, $k$ is a perfect field. For a variety $X/k$, and a field extension $L/k$, we denote the base change $X_L := X \times_{\Spec k} \Spec L$. For a closed point $x \in X$, $k(x)$ will denote its residue field, and $[x]$ will denote its cycle class in the Chow group $\ch_0(X)$. If $L/k(x)$ is a finite extension, consider the map $\Spec L \to X\times_{\Spec k} \Spec L$ which is determined by $\Spec L \to \Spec k(x) \to X$ and $\text{id}: \Spec L \to \Spec L$. Then we define $[x]_L \in \ch_0(X_L)$ to be the class of the closed point associated to this map (For example, see Definition \ref{def:Psi-map}).

\subsection{Acknowledgments}
I would like to thank my thesis advisor Evangelia Gazaki for her unwavering support and guidance on this project. She introduced me to this problem and gave me invaluable feedback at every stage in the process.

\section{Definitions}
Let $k$ be a perfect field. Let $C_1,\ldots, C_d$ be smooth, projective, and geometrically integral curves over $k$, and let $A$ be an abelian variety over $k$. We assume $C_i(k) \ne \emptyset$ for $1\le i\le d$, and denote the Jacobian of $C_i$ by $J_i$. We fix points $p_i \in C_i(k)$ which determine embeddings $\iota_{p_i}: C_i \to J_i.$

\subsection{Somekawa K-groups}\label{section: Somekawa-Kgps}

We now recall the definition of the K-groups defined in \cite{somekawa1990milnor}. For more information on such groups see also \cite{gazaki2015filtration}. 

Let $A_1, \ldots, A_r$ be abelian varieties over $k$. Then we define the group
\[
    \tilde{K}(k; A_1,\ldots, A_r) := \left[\bigoplus_{k'/k}A_1(k') \otimes \cdots \otimes A_r(k')\right]/PF
\]
where the sum is taken over all finite extensions $k'/k$ and $PF$ is the subgroup generated by elements of the following form. If $L/E/k$ is a tower of finite extensions, $a_i \in A_i(L)$ for some $1 \le i \le n$, and $a_j \in A_j(E)$ for all $j\ne i$, then 
\[
    a_1\otimes \cdots \otimes \Tr_{L/E}(a_i)\otimes \cdots \otimes a_r - \res_{L/E}(a_1)\otimes \cdots \otimes a_i \otimes \cdots \otimes \res_{L/E}(a_r) \in PF.
\]
Relations of the above form will be referred to as the projection formula.
\begin{notation}
    We follow the convention of denoting a simple tensor $a_1\otimes \ldots \otimes a_n$ in the component $A_1(k') \otimes \ldots \otimes A_r(k')$ of the group $\tilde{K}(k; A_1,\ldots, A_r)$ by the symbol $\{a_1,\ldots, a_r\}_{k'/k}.$
\end{notation}

Now we define the Somekawa $K$-group associated to the abelian varieties $A_1, \ldots, A_r$ by 
\[
    K(k; A_1,\ldots, A_r) := \tilde{K}(k; A_1,\ldots, A_r)/WR
\]
where $WR$ is the subgroup generated by elements of the following form. Let $K$ be a function field in one variable over $k$. Let $f \in K^\times$ and $x_i \in A_i(K)$ for $i = 1,\ldots, n$. Then 
\[
    \sum_{v \text{ place of } K/k} \ord_v(f)(s_v^1(x_1)\otimes \cdots \otimes s^n_v(x_n)) \in R.
\]
Here the specialization maps $s_v^i: A_i(K) \to A_i(k_v)$ where $k_v$ is the residue field of the place $v$ are defined as follows. Let $\mathcal{O}_v$ be the ring of integers of $v$. Then the natural map $A_i(\mathcal{O}_v) \to A_i(K)$ is an isomorphism by the valuative criterion for properness. Then the specialization map is defined by 
\[
    \begin{tikzcd}
        A_i(K)\arrow{rd}[swap]{s^i_v} & A_i(\mathcal{O}_v) 
        \arrow{l}[swap]{\cong} \arrow{d} \\
        & A_i(k)
    \end{tikzcd}
\]

The relations coming from $WR$ are called Weil relations. 

In the case that $A_1 = A_2 = \ldots = A_r$, we will denote the $K$ group by 
\[
    K_r(k;A) := K(k;\underbrace{A, \ldots, A}_{r}).
\]
Let $\Sigma_r$ denote the group of permutations of $\{1,\ldots, r\}$. Then this group acts on $K_r(k;A)$ by
\[
    \sigma\cdot \{x_1,\ldots, x_r\}_{k'/k} = \{x_{\sigma(1)}, \ldots, x_{\sigma(r)}\}_{k'/k}.
\]
We define 
\[
    S_r(k;A) := K_r(k;A)_{\Sigma_r}.
\]
Thus, the symbols $\{a_1,\ldots, a_r\}_{k'/k}$ in $S_r(k;A)$ are symmetric.
\\ Given abelian varieties $A_1,\ldots, A_r$, there exists an isomorphism (\cite{gazaki2022filtrations}, Corollary 2.14)
\[
    g_r: S_r(k; A_1\times \ldots \times A_d) \xrightarrow{\cong} \bigoplus_{1\le i_1\le \ldots \le i_r \le d} S_r(k; A_{i_1},\ldots, A_{i_r}).
\]
In particular, if $A$ is an abelian variety and $J_i$ is the Jacobian of $C_i$
\[
    g_r: S_r(k; J_1\times \ldots \times J_d \times A) \xrightarrow{\cong} \bigoplus_{0 \le t \le r}\bigoplus_{1\le i_1\le \ldots \le i_{r-t} \le d} S_r(k; J_{i_1}, \ldots, J_{i_{r-t}}, \underbrace{A, \ldots, A}_{t})
\]

\begin{defn}\label{def: main K-group def}
Define the $K$-group
\[
    S_r(k; \underline{J_1}\times \ldots \times \underline{J_d} \times A) := g_r^{-1}\left(\bigoplus_{0 \le t \le r}\bigoplus_{1\le i_1< \ldots < i_{r-t} \le d} S_r(k; J_{i_1}, \ldots, J_{i_{r-t}}, \underbrace{A, \ldots, A}_{t})\right).
\]
That is, the group $S_r(k; \underline{J_1}\times \ldots \times \underline{J_d} \times A)$ is the subgroup of $S_r(k; J_1\times \ldots \times J_d \times A)$ composed of the direct summands which have at most one copy of each of the Jacobians. In the case when $r = 0$, we define $S_0(k; \underline{J_1}\times \ldots \times \underline{J_d}\times A) := \mathbb{Z}$.
\end{defn}

An important feature of the Somekawa $K$-groups that we will make use of is that they are functorial with respect to field extensions. Suppose that $L/k$ is a finite extension. Then the trace map is defined by 
\[
    \Tr_{L/k} : K(L; A_1\times_k L, \ldots, A_n\times_k L) \to K(k; A_1, \ldots, A_n)
\]
\[
    \{a_1, \ldots, a_n\}_{E/L} \mapsto \{\pi\circ a_1, \ldots, \pi\circ a_n\}_{E/k}
\]
where $\pi : A\times_k L \to A$ is the natural projection map.
Now let $L/k$ be an arbitrary extension. Then the restriction map
\[
    \res_{L/k}:K(k; A_1,\ldots, A_n) \to K(L; A_1\times_k L, \ldots, A_n\times_k L)
\]
is defined as follows. Let $E/k$ be a finite extension. Then since $k$ is perfect, $L\otimes_{k} E \cong \prod_{i=1}^m L_i$ where $L_i/L$ is a finite extension. Then we define
\[
    \res_{L/k}(\{a_1,\ldots, a_n\}_{E/k}) = \sum_{i=1}^m\{\res_{L_i/E}(a_1),\ldots, \res_{L_i/E}(a_n)\}_{L_i/L}.
\]
Note that if $L/k$ is a finite extension, and $\{a_1,\ldots, a_n\}_{k'/k} \in K(k; A_1,\ldots, A_n)$, then 
\[
    \Tr_{L/k}(\res_{L/k}(\{a_1,\ldots, a_n\}_{k'/k})) = [L:k]\{a_1,\ldots, a_n\}_{k'/k}.
\]
\subsection{Chow Groups}
Recall that given a smooth projective variety $Y$, the group $\ch^*(Y)$ becomes a graded ring under the intersection product, which we will denote by $\cdot$. Given a morphism $f: Y\to Z$ we have a well-defined pushforward
\[
    f_*: \ch_n(Y) \to \ch_n(Z)
\]
if $f$ is proper, and a well-defined pullback
\[
    f^*: \ch^n(Y) \to \ch^n(Z)
\]
if $f$ is flat. 

In the case of an abelian variety $A$, $\ch_*(A)$ also becomes a graded ring under the Pontryagin product, which we will denote by $\odot$. This is defined as 
\[
    \odot: \ch_r(A) \otimes \ch_s(A) \to \ch_{r+s}(A)
\]
\[
    \alpha \odot \beta := m_*(\alpha \times \beta)
\]
where $m: A\times A \to A$ is the multiplication map on the abelian variety. 

If $Y$ is a smooth projective variety over $k$ with a $k$-rational point $P$, then there exists an abelian variety $\Alb_Y$ and a map $\varphi: Y \to \Alb_Y$ which sends $P$ to $0$ and is universal with respect to pointed maps from $Y$ to abelian varieties. That is, for every abelian variety $A$ and morphism $\psi: Y \to A$ such that $\psi(P) = 0$ there exists a unique map $f: \Alb_Y \to A$ making the following diagram commute
\[
    \begin{tikzcd}
        Y \arrow{r}{\varphi} \arrow{rd}{\psi} & \Alb_Y \arrow{d}{f} \\
        & A.
    \end{tikzcd}
\]
Moreover, there exists a group homomorphism $\alb_Y: A_0(Y) \to \Alb_Y(k)$ which is independent of the choice of the rational point $P \in Y(k)$. 

In the case that $Y = C_1\times \cdots \times C_d \times A$, by the universal property, 
\[
    \Alb_Y = J_1\times \cdots \times J_d \times A
\]
and 
\[
    \varphi = \iota_{p_1}\times \cdots \times \iota_{p_d}\times \text{id}_A.
\]
is the map $\varphi: Y \to \Alb_Y$ in the above definition.
For more information on the Albanese variety and the map $\alb_Y$ see \cite{bloch2010lectures}.

\section{The Filtration}\label{section: Filtration}

In this section, we give a filtration of the Chow group $\ch_0(C_1\times \ldots \times C_d\times A)$ by defining maps from the Chow groups to the Somekawa K-groups defined in section 2.1. Recall that for each curve $C_i$, we have fixed $k$-points $p_i\in C_i(k)$ and maps $\iota_{p_i} : C_i \to J_i$ such that $\iota_{p_i}(p_i) = 0$. We will identify $J_i(L)$ with $\text{Pic}^0(C_i\times_k L)$ throughout. Under this identification, $\iota_{p_i}$ can be described on $k$-rational points by $\iota_{p_i}(y) = [y]-[p_i]$ for $y \in C_i(k)$. For the rest of this paper we will denote $X := C_1\times \cdots\times C_d\times A$ and $\varphi_X: X \to \Alb_X$ the map to the Albanese variety. We will also denote the identity element of $A(k)$ by $0$ rather than $0_A$ when it is clear from context that it is an element of $A(k)$.  

\begin{rem}
    In the case that any $C_i$ has genus $0$, since we assume that $C_i$ has a $k$-rational point, it is isomorphic to $\mathbb{P}^1$ over $k$. In this case we have that 
    \[
        \ch_0(C_1 \times \cdots \times \mathbb{P}^1 \times \cdots \times C_d\times A) \cong \ch_0(C_1 \times \cdots \times \hat{C}_i \times \cdots \times C_d \times A)
    \]
    (see \cite{fulton2013intersection} Example 1.10.2). Thus, in what follows, we assume that $C_i$ has positive genus for each $i$ and therefore embeds into its Jacobian via $\iota_{p_i}$. In particular, the map $\varphi_X$ is a closed embedding.
\end{rem}

\begin{defn}
    Define the map 
    \[
        \Phi_r: \ch_0(X) \to S_r(k; \underline{J_1}\times \cdots \times \underline{J_d}\times A)
    \]
    by 
    \[
        \Phi_r([z]) = \{\varphi_X(z), \ldots, \varphi_X(z)\}_{k(z)/k}.
    \]
    In the case $r=0$, we take $\Phi_0$ to be the degree map. Here we interpret $\varphi_X(z)$ as a $k(z)$ rational point ($\varphi_X$ is a closed embedding since the genus of each curve is positive, so $k(\varphi_X(z))$ can be identified with $k(z)$). 
\end{defn}

\begin{prop}
    The map $\Phi_r$ is well-defined, i.e. it factors through rational equivalence.
\end{prop}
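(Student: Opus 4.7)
The plan is to verify that $\Phi_r$, extended linearly from zero-cycles, vanishes on every generator of rational equivalence on $\ch_0(X)$. Such generators have the form $\iota_*(\dv(f))$, where $\iota:\tilde W\to X$ is the composition of the normalization of a one-dimensional closed subvariety $W\subset X$ with its inclusion, and $f\in k(\tilde W)^\times$. The case $r=0$ reduces to the classical fact that a principal divisor on a smooth projective curve has degree zero, so I would focus on $r\ge 1$.

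The key idea is to manufacture a Weil relation out of the data $(\tilde W, f)$. The composition $h = \varphi_X\circ\iota :\tilde W\to\Alb_X$ is a morphism from a smooth projective curve to an abelian variety, and hence is determined by a $K$-rational point $\eta\in\Alb_X(K)$, where $K=k(\tilde W)$. I would then identify, for each closed point $v\in\tilde W$ with residue field $k_v$ and image $z_v=\iota(v)\in X$, the specialization $s_v(\eta)\in\Alb_X(k_v)$ with the Albanese value $\varphi_X(z_v)\in\Alb_X(k(z_v))$ base-changed along $k(z_v)\hookrightarrow k_v$; concretely, $s_v(\eta)=\res_{k_v/k(z_v)}(\varphi_X(z_v))$.

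With this identification in hand, the Weil relation in $S_r(k; J_1\times\cdots\times J_d\times A)$ applied to $f$ and to the constant inputs $x_1=\cdots=x_r=\eta$ reads
\[
    \sum_{v}\ord_v(f)\,\{\res_{k_v/k(z_v)}(\varphi_X(z_v)),\ldots,\res_{k_v/k(z_v)}(\varphi_X(z_v))\}_{k_v/k}=0.
\]
To convert this into a statement about $\Phi_r$, I would apply the projection formula slot by slot in the tower $k_v/k(z_v)/k$: moving the restriction on one coordinate across to a trace on another, and using $\Tr_{k_v/k(z_v)}\circ\res_{k_v/k(z_v)}=[k_v:k(z_v)]\cdot\mathrm{id}$, each summand collapses to $[k_v:k(z_v)]\,\{\varphi_X(z_v),\ldots,\varphi_X(z_v)\}_{k(z_v)/k}$. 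Since $\iota_*[v]=[k_v:k(z_v)][z_v]$ as cycles on $X$, the resulting identity is precisely $\Phi_r(\iota_*(\dv(f)))=0$ in $S_r(k; J_1\times\cdots\times J_d\times A)$, and hence also in the direct summand $S_r(k; \underline{J_1}\times\cdots\times\underline{J_d}\times A)$, as required.

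The main obstacle, such as it is, is bookkeeping: one must carefully distinguish the residue field $k_v$ of $v\in\tilde W$ from the residue field $k(z_v)$ of its image $z_v\in X$, and extract the degree factor $[k_v:k(z_v)]$ from the projection formula in a way that exactly matches the multiplicity in $\iota_*[v]$. No deeper input is needed: once the specializations of $\eta$ are matched with Albanese values and the projection formula is applied correctly, the Weil relation translates directly into the desired vanishing, and factoring through rational equivalence follows.
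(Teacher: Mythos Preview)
Your argument is correct. The paper's own proof takes a shorter structural route: rather than unpacking a Weil relation directly on $X$, it observes that $\Phi_r$ factors as
\[
\ch_0(X)\xrightarrow{(\varphi_X)_*}\ch_0(\Alb_X)\xrightarrow{\tilde\Phi_r}S_r(k;\Alb_X)\xrightarrow{q}S_r(k;\underline{J_1}\times\cdots\times\underline{J_d}\times A),
\]
and then cites Proposition~3.1 of \cite{gazaki2015filtration}, which already establishes that $\tilde\Phi_r$ is well-defined on $\ch_0$ of an abelian variety. Your proof is essentially an inlined version of that cited result, carried out on $X$ rather than on $\Alb_X$: the curve $\tilde W$, the generic point $\eta\in\Alb_X(K)$, and the Weil relation with $x_1=\cdots=x_r=\eta$ are exactly the ingredients Gazaki uses. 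The paper's approach buys brevity and modularity (it isolates the nontrivial step as a black box already in the literature), while yours is self-contained and makes transparent where the Weil relation and the projection formula enter; in particular, your handling of the degree factor $[k_v:k(z_v)]$ via $\Tr\circ\res$ is the right bookkeeping and matches what happens inside the cited proposition.
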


\begin{proof}
    By Proposition 3.1 of \cite{gazaki2015filtration} for any abelian variety $B$ over $k$, there is a well defined map 
    \[
        \tilde{\Phi}_r : \ch_0(B) \to S_r(k; B)
    \]
    \[
        \tilde{\Phi}_r([b]) = \{b, \ldots, b\}_{k(b)/k}.
    \]
    Taking $B$ to be $J_1\times \ldots \times J_d\times A$ we can realize $\Phi_r$ as the composite 
    \[
        \begin{tikzcd}
            \ch_0(X) \arrow{r}{(\varphi_X)_*} &[1em] \ch_0(\Alb_X) \arrow{r}{\tilde{\Phi}_r} & S_r(k; \Alb_X) \arrow{r}{q} & S_r(k; \underline{J_1}\times\ldots \times \underline{J_d} \times A).
        \end{tikzcd}
    \]
    where $q$ is the natural quotient. Note that $(\varphi_X)_*([z]) = [\varphi_X(z)]$ since $\varphi_X$ is a closed embedding. Thus, the map $\Phi_r$ is well-defined.
\end{proof}

\begin{defn}
    We define a filtration on $\ch_0(X)$ by
    \[
        F^r\ch_0(X) := \bigcap_{j=0}^{r-1}\ker(\Phi_j).
    \]
\end{defn}
It is clear from the definition that $F^0\ch_0(X) = \ch_0(X)$ and $F^1\ch_0(X) = A_0(X)$, the kernel of the degree map. To show that this filtration extends the natural one on $\ch_0(X)$, we verify that $F^2\ch_0(X)$ is the kernel of the Albanese map $\alb_X$.

\begin{prop}
    The kernel of the Albanese map $\alb_X$ is equal to $F^2\ch_0(X)$.
\end{prop}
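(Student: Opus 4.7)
The plan is to first observe that $F^2\ch_0(X) = \ker(\Phi_0) \cap \ker(\Phi_1)$ equals $A_0(X) \cap \ker(\Phi_1)$, since $\Phi_0$ is the degree map. So it suffices to identify the target of $\Phi_1$ with $\Alb_X(k)$ in a canonical way and check that under this identification the restriction of $\Phi_1$ to $A_0(X)$ is precisely the Albanese map $\alb_X$.

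For the identification of the target, I would recall that for any single abelian variety $B$ over $k$, the projection formula relations in $\tilde{K}(k;B)$ already collapse every symbol $\{a\}_{L/k}$ to $\{\Tr_{L/k}(a)\}_{k/k}$, and the Weil relations become statements of the form $\sum_v \ord_v(f)\,\Tr_{k_v/k}(s_v(x)) = 0$ in $B(k)$, which hold because this sum equals $\phi_*(\dv(f))$ followed by the sum map on $B(k)$, where $\phi\colon C \to B$ is the morphism corresponding to $x \in B(K(C))$. Hence $S_1(k;B) \cong B(k)$ via the trace. Combining this with the decomposition isomorphism $g_1$ from Section 2.1 (for which the ordering condition $i_1 < \cdots < i_{r-t}$ is vacuous in the case $r = 1$) yields a canonical isomorphism
\[
    S_1(k;\underline{J_1}\times\cdots\times\underline{J_d}\times A) \;\xrightarrow{\cong}\; J_1(k) \oplus \cdots \oplus J_d(k) \oplus A(k) = \Alb_X(k),
\]
under which the symbol $\{\varphi_X(z)\}_{k(z)/k}$ is sent to $\Tr_{k(z)/k}(\varphi_X(z))$.

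Extending linearly, for any zero-cycle $\alpha = \sum n_i[z_i]$ one then has $\Phi_1(\alpha) = \sum n_i \Tr_{k(z_i)/k}(\varphi_X(z_i))$, which is exactly the formula defining the Albanese map on $A_0(X)$ (any dependence on the chosen base point drops out on degree-zero cycles because $\varphi_X$ sends the fixed rational point to $0$). Therefore $\Phi_1|_{A_0(X)} = \alb_X$, and
\[
    F^2\ch_0(X) = A_0(X) \cap \ker(\Phi_1) = \ker(\alb_X),
\]
as required. The principal point requiring care is the identification $S_1(k;B) \cong B(k)$ and its compatibility with $g_1$; once that is in hand, the proposition reduces to a direct comparison of formulas.
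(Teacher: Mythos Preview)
Your proposal is correct and follows essentially the same approach as the paper: identify $S_1(k;\underline{J_1}\times\cdots\times\underline{J_d}\times A)$ with $\Alb_X(k)$ via the trace, and observe that under this identification $\Phi_1$ becomes $\alb_X$. The only cosmetic difference is that the paper cites the isomorphism $K_1(k;B)\cong B(k)$ from \cite{gazaki2015filtration} rather than sketching it, and notes directly that $S_1=K_1$ here instead of passing through $g_1$.
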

\begin{proof}
    By the definitions, $S_1(k; \underline{J_1}\times \cdots \times \underline{J_d}\times A) \cong K_1(k; J_1\times \cdots J_d\times A)$. Also by Corollary 3.7 of \cite{gazaki2015filtration}, for any abelian variety $B$, the map 
    \[
        K_1(k; B) \to B(k)
    \]
    defined by $\{b\}_{k'/k} \mapsto \Tr_{k'/k}(b)$ is an isomorphism.
    
    Therefore, we have a commutative diagram
    \[
    \begin{tikzcd}
        \ch_0(X) \arrow{r}{\Phi_0 \oplus \Phi_1} \arrow{d}[swap]{\text{deg}\oplus \alb_{X}} &\Z \oplus K_1(k; J_1\times \cdots \times J_d \times A) \arrow{ld}{\text{id}\oplus \cong} \\
        \Z \oplus (J_1\times \cdots \times J_d \times A)(k).
    \end{tikzcd}
    \]
    Then $F^2\ch_0(X\times A)$ is by definition the kernel of the horizontal map. And the Albanese kernel is the kernel of the vertical map, so these two groups agree.
\end{proof}

\section{Proof of the Main Theorem}

In this section, we prove Theorem \ref{thm: main}. The way that we prove this theorem is by defining maps 
\[
    \Psi_r: S_r(k; \underline{J_1}\times \cdots \times \underline{J_d}\times A) \to F^r\ch_0(X)/F^{r+1}\ch_0(X)
\]
and then computing $\Phi_r\circ \Psi_r$. 

\subsection{Definition of $\Psi_r$ and Proof of Theorem \ref{thm: main}}
Let $k'/k$ be a finite extension. Then we define $\pi_{k'/k}: X_{k'} \to X$ to be the natural projection. We also define the projection maps
\[
pr_{i,{k'}}: (C_1)_{k'}\times_{k'} \cdots \times_{k'} (C_d)_{k'}\times_{k'}A_{k'} \to (C_i)_{k'}
\]
and 
\[
pr_{A, k'}: (C_1)_{k'}\times_{k'} \cdots \times_{k'} (C_d)_{k'}\times_{k'}A_{k'} \to A_{k'}.
\]
Throughout this article, we will also abuse notation by denoting an element of 
\[
    \prod_{i=1}^r J_1(k')\oplus \cdots \oplus J_d(k') \oplus A(k')
\]
by $\{(z^1_1,\ldots, z_d^1, a_1), \ldots, (z_1^r, \ldots, z_d^r, a_r)\}_{k'/k}$ even though multilinearity and the projection formula do not apply. Finally, if $\alpha_i \in \ch_0(A_{k'})$ for $i \in I$ a finite indexing set. Then we denote by
\[
    \bigodot_{i\in I} \alpha_i 
\]
the Pontryagin product of all $\alpha_i$. Using this notation, we make the following definition.

\begin{defn}\label{def:Psi-map}
    Define the map 
    \[
        \Psi'_r: \bigoplus_{k'/k}\prod_{i=1}^r J_1(k')\oplus \cdots \oplus J_d(k') \oplus A(k') \to \ch_0(X)
    \]
    by
    \[
        \{(z_1^1, \ldots, z_d^1, a_1), \ldots, (z_1^r,\ldots, z_d^r, a_r)\}_{k'/k} \mapsto  \sum_{s = 0}^d \sum_{1\le i_1 < \ldots < i_s \le d}\sum_{\varphi_I: \{i_1, \ldots, i_s\} \hookrightarrow \{1, \ldots, r\}} (\pi_{k'/k})_*(W)
    \]
    where 
    \begin{align*}
        &W = \left(pr_{1,k'}^*([p_1]_{k'})\cdots pr_{i_1, k'}^*(z^{\varphi_I(i_1)}_{i_1})\cdots pr_{i_s,k'}^*(z^{\varphi_I(i_s)}_{i_s})\cdots pr_{d,k'}^*([p_d]_{k'})\right) \\
        &\cdot pr_{A,k'}^*\left(\bigodot_{j \in \{1,\ldots r\}\setminus \varphi_I(I) }([a_j]_{k'} - [0]_{k'})\right).
    \end{align*}
    The $z^j_i \in J_i(k')$ are viewed as arbitrary elements of $\text{Pic}^0((C_i)_{k'})$ and $I := \{i_1,\ldots, i_s\}$.
\end{defn}
\begin{rem}
    Here the $i_1,\ldots, i_s$ specify a subset of the curves $C_1,\ldots, C_d$ and the injection $\varphi_I : \{i_1,\ldots, i_s\} \hookrightarrow \{1,\ldots, r\}$ assigns to each curve a unique index from $1,\ldots, r$. The reason we demand that $\varphi_I$ is injective corresponds to the fact that in the group $S_r(k; \underline{J_1}\times \ldots\times \underline{J_d}\times A)$ each direct summand has at most one copy of each Jacobian (see Definition \ref{def: main K-group def}). Therefore, we make this requirement so that $\Psi_r'$ will descend to a group homomorphism from the quotient $S_r(k; \underline{J_1}\times \ldots\times \underline{J_d}\times A)$. 
\end{rem}
\begin{exmp}
    Let us consider the map $\Psi_2'$ in the case when $d = 1$. Denote $z_1^1 := \sum_{y\in C}n_y^1[y]$ and $z_1^2 := \sum_{y\in C} n_y^2[y]$. Then we get that 
    \begin{align}
        \Psi_r'(\{(z_1^1, a_1), (z_1^2, a_2)\}_{k'/k} &= [p_1\times a_1+a_2]_k - [p_1\times a_1] - [p_1\times a_2] + [p_1\times 0]\\
        &+ (\pi_{k'/k})_*\left(\sum_{y\in C}n_y^1\left([y\times a_2]_{k'} - [y\times 0]_{k'}\right)\right) \\
        &+ (\pi_{k'/k})_*\left(\sum_{y\in C}n_y^2\left([y\times a_1]_{k'} - [y\times 0]_{k'}\right)\right).
    \end{align}
    Note that line (4.4) corresponds to a zero cycle coming only from the abelian variety. The line (4.5) is a zero cycle which is impacted by $z_1^1$ and $a_2$ while (4.6) is impacted by $z_1^2$ and $a_1$. 

    This example illustrates what occurs in the general definition of $\Psi_r'$. It interpolates between zero cycles coming purely from the abelian variety and those which come from a mix of some subset of the curves $C_1,\ldots, C_d$ and the abelian variety. 
\end{exmp}

\begin{prop}\label{prop:PsiWellDef}
    The map $\Psi'_r$ is well defined.
\end{prop}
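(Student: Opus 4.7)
The plan is to observe that the only ambiguity in the defining formula for $\Psi'_r$ lies in the factors $pr_{i,k'}^*(z_{i_1}^{\varphi_I(i_1)})$, since the elements $z_{i}^{j} \in J_i(k') = \Pic^0((C_i)_{k'})$ are only classes of divisors rather than honest cycles. Everything else in the formula --- the fixed rational points $[p_j]_{k'}$, the zero-cycles $[a_j]_{k'} - [0]_{k'}$ on $A_{k'}$, and the pushforward along the finite (hence proper) morphism $\pi_{k'/k}$ --- is already given by well-defined operations on Chow groups. So the content of the proposition is really to check that the formula produces a bona fide class in $\ch_0(X)$ once we interpret the $z_i^j$ appropriately, independent of any choice of divisor representative.

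First I would verify the dimension count so that all intersection products make sense. For each curve factor $C_j$, either $[p_j]_{k'}$ (a zero-cycle on the curve) or $z_j^{\varphi_I(j)}$ (a divisor class on the curve) is used; pulling back via the smooth projection $pr_{j,k'}$ lands the class in $\ch^1(X_{k'})$. Taking the $d$-fold intersection product of these contributions produces a class in $\ch^d(X_{k'})$. Meanwhile the Pontryagin product $\bigodot_{j \in \{1,\ldots,r\} \setminus \varphi_I(I)}([a_j]_{k'} - [0]_{k'})$ is a zero-cycle on $A_{k'}$, i.e.\ an element of $\ch^{\dim A}(A_{k'})$, whose pullback along $pr_{A,k'}$ lies in $\ch^{\dim A}(X_{k'})$. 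Multiplying these two pieces in the intersection ring of the smooth variety $X_{k'}$ gives $W \in \ch^{d+\dim A}(X_{k'}) = \ch_0(X_{k'})$, as needed, and then $(\pi_{k'/k})_*(W) \in \ch_0(X)$ since $\pi_{k'/k}$ is proper.

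Next I would justify independence from the choice of divisor representative. Given two divisor representatives $D, D'$ of the same class $z_i^{j} \in \Pic^0((C_i)_{k'})$, their difference is a principal divisor, so $[D]=[D']$ in $\ch^1((C_i)_{k'})$. Flat pullback on Chow groups respects rational equivalence, so $pr_{i,k'}^*[D] = pr_{i,k'}^*[D']$ in $\ch^1(X_{k'})$, and therefore the intersection product defining $W$ depends only on the class $z_i^{j}$. All other ingredients of the formula --- the pullbacks $pr_{j,k'}^*([p_j]_{k'})$ and $pr_{A,k'}^*(\cdots)$, the intersection product, the Pontryagin product, and the proper pushforward $(\pi_{k'/k})_*$ --- are standard well-defined operations on Chow classes on smooth varieties, so no further check is required.

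The main obstacle is purely organizational rather than mathematical: the formula involves nested sums over subsets $I \subseteq \{1,\ldots,d\}$ and injections $\varphi_I \colon I \hookrightarrow \{1,\ldots,r\}$, with different factors appearing depending on whether an index belongs to $I$ or to $\varphi_I(I)$, so some care is needed to verify that each summand really is a zero-cycle on $X_{k'}$. Once the bookkeeping is in hand, the proposition follows immediately from standard properties of flat pullback, intersection product, Pontryagin product, and proper pushforward on Chow groups.
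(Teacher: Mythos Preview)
Your argument is correct and matches the paper's approach: the paper opens its proof with exactly your observation that well-definedness ``follows from the fact that the intersection product and Pontryagin product determine well-defined maps on Chow groups,'' and then simply unwinds this by replacing each $z_i^t$ with $z_i^t + \operatorname{div} f_i^t$ and using that $pr_{j,k}^*(\operatorname{div} f_j^i) = 0$ in the Chow group. Your version packages the same content slightly more conceptually (flat pullback is defined on Chow classes, hence independent of representative), while the paper writes out the explicit computation, but there is no substantive difference.
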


\begin{proof}
    This follows from the fact that the intersection product and Pontryagin product determine well-defined maps on Chow groups, but we provide the full details of this argument here.
    
    Let $\tilde{z}_i^t := z_i^t + \dv f_i^t$ for $1\le i \le d$ and $1 \le t \le r$ where $f_i^t \in K(C_i\times_k k')^\times$. Let $a_1, \ldots, a_r \in A(k')$. Then we need to show that 
    \begin{align*}
        &\Psi_r'\{(\tilde{z}_1^1, \ldots, \tilde{z}_d^1, a_1), \ldots, (\tilde{z}_1^r,\ldots, \tilde{z}_d^r, a_r)\}_{k'/k} \\
        &= \Psi_r'\{(z_1^1, \ldots, z_d^1, a_1), \ldots, (z_1^r,\ldots, z_d^r, a_r)\}_{k'/k}. 
    \end{align*}
    Let us assume that $k'=k$ as the argument is the same in the general case. We have 
    \begin{align*}
        &\Psi_r'\left(\{(\tilde{z}_1^1, \ldots, \tilde{z}_d^1, a_1), \ldots, (\tilde{z}_1^r,\ldots, \tilde{z}_d^r, a_r)\}_{k/k}\right) \\
        &= \sum_{s = 0}^d \sum_{1\le i_1 < \ldots < i_s \le d}\sum_{\varphi_I: \{i_1, \ldots, i_s\} \hookrightarrow \{1, \ldots, r\}}  \\
        & \left(pr_{1,k}^*([p_1])\cdots pr_{i_1,k}^*\left(z^{\varphi_I(i_1)}_{i_1} + \dv f_{i_1}^{\varphi_I(i_1)}\right)\cdots pr_{i_s,k}^*\left(z^{\varphi_I(i_s)}_{i_s} + \dv f_{i_s}^{\varphi_I(i_s)}\right)\cdots pr_{d,k}^*([p_d])\right) \\
        &\cdot pr_{A,k}^*\left(\bigodot_{j \in \{1,\ldots r\}\setminus \varphi_I(I) }([a_j] - [0])\right).
    \end{align*}
    But after expanding using multilinearity of the intersection product and the fact that $pr_{j,k}^*(\dv f_j^i) = 0$ for any $j,i$, we have that the above is equal to 
    \begin{align*}
    &\sum_{s = 0}^d \sum_{1\le i_1 < \ldots < i_s \le d}\sum_{\varphi_I: \{i_1, \ldots, i_s\} \hookrightarrow \{1, \ldots, r\}} \\
    & \left(pr_{1,k}^*([p_1])\cdots pr_{i_1,k}^*(z^{\varphi_I(i_1)}_{i_1})\cdots pr_{i_s,k}^*(z^{\varphi_I(i_s)}_{i_s})\cdots pr_{d,k}^*([p_d])\right) \\
    &\cdot  pr_{A,k'}^*\left(\bigodot_{j \in \{1,\ldots r\}\setminus \varphi_I(I) }([a_j] - [0])\right)\\
    &= \Psi_r'\{(z_1^1, \ldots, z_d^1, a_1), \ldots, (z_1^r,\ldots, z_d^r, a_r)\}_{k/k}.
    \end{align*}
    So the map is well defined.
\end{proof}

The most involved aspect in proving Theorem \ref{thm: main} is verifying the following two key lemmas. We give the proofs in section \ref{section: proofs} and use the results in this section to show the main theorem.

\begin{lem}\label{lem: key-lem-Psi}
    The map $\Psi_r'$ induces a group homomorphism 
    \[
        \Psi_r: S_r(k; \underline{J_1}\times \cdots \times \underline{J_d}\times A) \to F^r\ch_0(X)/F^{r+1}\ch_0(X).
    \]
\end{lem}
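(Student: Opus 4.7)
The plan is to verify that $\Psi_r'$ respects each defining relation of $S_r(k;\underline{J_1}\times\cdots\times\underline{J_d}\times A)$ modulo $F^{r+1}\ch_0(X)$, and that its image lands in $F^r\ch_0(X)$. Concretely one needs: (a) multilinearity in each of the $r$ slots; (b) the projection formula; (c) the Weil relations; (d) symmetry under $\Sigma_r$; (e) vanishing on summands in $S_r(k;J_1\times\cdots\times J_d\times A)$ containing two or more copies of the same Jacobian; and (f) $\Phi_j\circ\Psi_r'\equiv 0$ for every $j<r$. A useful simplification throughout is to reduce to the case $k'=k$ by exploiting the outer pushforward $(\pi_{k'/k})_*$ together with the base-change behavior of intersection and Pontryagin products.

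I would start with (f), which is really the heart of the matter. Composing with the factorization of $\Phi_j$ through the Albanese map together with the decomposition $g_r$, the verification reduces to an explicit multilinear computation involving the ``deviation classes'' $D_i^t := pr_{i,k'}^*(z_i^t)-pr_{i,k'}^*([p_i]_{k'})$ and $E_t := pr_{A,k'}^*([a_t]_{k'}-[0]_{k'})$. The formula for $\Psi_r'$ is, up to $(\pi_{k'/k})_*$, a sum of products of exactly $r$ such deviations; applying the analogue of $\tilde{\Phi}_j$ from the previous proposition expands each factor multilinearly into a Somekawa symbol, and for $j<r$ a pigeonhole argument forces at least one of the $r$ deviations to be ``unused,'' so symmetrization in $S_j$ collapses the expression to zero. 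The same bookkeeping handles the nonlinearity in the $A$-slot of (a): from $[a+a']-[0]=([a]-[0])+([a']-[0])+([a]-[0])\odot([a']-[0])$ one sees that the defect is a product of $r+1$ deviations, hence lies in $F^{r+1}\ch_0(X)$ by (f). Multilinearity in the $J_i$-slots is immediate from additivity of flat pullback, (b) reduces to the standard projection formula for pushforward along the finite map $\pi_{L/E}$, and (d) is essentially built into $\Psi_r'$ by the sum over all injections $\varphi_I$.

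The relations (c) and (e) are where I expect the real work to lie. For the Weil relations (c), following the strategy of Raskind--Spiess and Gazaki, I would fix a test curve $C'/k$ with function field $K$, a function $f\in K^\times$, and points $x_t\in (J_1\times\cdots\times J_d\times A)(K)$, then lift the formula defining $\Psi_r'$ to an auxiliary cycle on $X\times C'$ built from the graphs of the $x_t$; the alternating sum of specializations at the places of $C'/k$ becomes the pushforward along $X\times C'\to X$ of an intersection against $\mathrm{id}_X\times\dv(f)$, which is rationally trivial on $C'$, so the cycle vanishes in $\ch_0(X)$. For (e), when two slots $t,t'$ contribute to the same Jacobian $J_i$, the symmetrization introduces an extra intersection with $pr_{i,k'}^*$ of a degree-zero divisor on $C_i$, and together with the remaining factors this produces an $(r+1)$-fold deviation class whose image under each $\Phi_j$ with $j\le r$ vanishes by the argument of (f), placing it in $F^{r+1}$. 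The main obstacle I expect is (c): constructing the auxiliary cycle on $X\times C'$ so that intersection theory on the curve factors, the Pontryagin product on $A$, and Weil reciprocity on $C'$ all interact cleanly requires careful bookkeeping, and this is where most of the technical content of Section~\ref{section: proofs} will reside.
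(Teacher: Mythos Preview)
Your plan would work, but it takes the long way around and misjudges where the technical weight lies. The paper does \emph{not} verify Weil relations, full multilinearity, or the projection formula for $\Psi_r$ directly. Instead, after proving (f) (Proposition~\ref{prop:Psi-in-Fr}) it establishes the single identity $\Phi_r\circ\Psi_r' = r!\cdot q$, where $q$ is the natural quotient to $S_r(k;\underline{J_1}\times\cdots\times\underline{J_d}\times A)$ (Corollary~\ref{cor: phi circ psi}). The proof of the lemma is then two lines: if $x$ lies in the kernel of $q$, then $\Phi_r(\Psi_r(x)) = r!\,q(x) = 0$; but $\Phi_r$ is \emph{injective} on $F^r/F^{r+1}$ by the very definition $F^{r+1}=\bigcap_{j\le r}\ker\Phi_j$, so $\Psi_r(x)=0$. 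All of the relations you plan to check one by one are thus handled simultaneously, for free, by the injectivity of $\Phi_r$ on the graded piece.

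The tradeoff is this: your approach gives more, since it shows each relation holds modulo $F^{r+1}$ (and in some cases on the nose), information the paper later needs anyway---it does eventually verify Weil relations directly, but only in Section~\ref{section: finiteness of filt} over an algebraically closed field for the auxiliary filtration $F^r_\Psi$. The paper's approach is far cleaner for the present lemma because the hard combinatorics in Proposition~\ref{prop:Psi-in-Fr} (your item (f)) already yields $\Phi_r\circ\Psi_r'=r!$ as an immediate corollary, so nothing further is required. Your prediction that ``most of the technical content of Section~\ref{section: proofs}'' resides in the Weil-relation check is therefore off: that section contains no such check at all, and its technical content is entirely in the alternating-sum/counting argument for (f).
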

The content of this lemma is that the image of the map $\Psi_r'$ lands in $F^r\ch_0(X)$ and when we take the composite 
\[
    \bigoplus_{k'/k}\prod_{i=1}^r J_1(k')\oplus \ldots \oplus J_d(k') \oplus A(k') \xrightarrow{\Psi_r'} F^r\ch_0(X) \to F^{r}\ch_0(X)/F^{r+1}\ch_0(X),
\]
which we denote $\Psi_r$, this map induces a well-defined group homomorphism on the quotient group $S_r(k; \underline{J_1}\times \cdots \times \underline{J_d}\times A)$.

\begin{lem}\label{lem: key-lem-comp}
    The composite 
    \[
        S_r(k; \underline{J_1}\times \cdots \times \underline{J_d}\times A) \xrightarrow{\Psi_r} F^r\ch_0(X)/F^{r+1}\ch_0(X) \xrightarrow{\Phi_r} S_r(k; \underline{J_1}\times \cdots \times \underline{J_d}\times A)
    \]
    is multiplication by $r!$.
\end{lem}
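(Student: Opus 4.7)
The plan is to exploit the factorization $\Phi_r = q \circ \tilde{\Phi}_r \circ (\varphi_X)_*$ from the previous section and compute $\Phi_r(\Psi_r'(\{b_1,\ldots,b_r\}_{k'/k}))$ directly, summand by summand, where we set $b_t := (z_1^t,\ldots,z_d^t,a_t) \in \Alb_X(k')$. Fix a pair $(I, \varphi_I)$ as in Definition \ref{def:Psi-map}, with $I = \{i_1 < \cdots < i_s\} \subseteq \{1,\ldots,d\}$ and $\varphi_I : I \hookrightarrow \{1,\ldots,r\}$; the corresponding summand $W$ is literally an external product on $C_{1,k'} \times \cdots \times C_{d,k'} \times A_{k'}$, being an intersection of pullbacks from distinct factors. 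The map $\varphi_{X,k'} = \iota_{p_1} \times \cdots \times \iota_{p_d} \times \mathrm{id}_A$ pushes $W$ to an external product on $\Alb_{X,k'}$ in which each $[p_i]_{k'}$ for $i \notin I$ becomes the Pontryagin identity $[0_{J_i}]_{k'}$.

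Next we apply $\tilde{\Phi}_r$ point-by-point via $\tilde{\Phi}_r([x]) = \{x,\ldots,x\}_{k(x)/k}$ and expand each symbol using multilinearity along the decomposition $\Alb_X = J_1 \times \cdots \times J_d \times A$. This produces a sum of symbols indexed by functions $f : \{1,\ldots,r\} \to I \cup \{A\}$; the quotient $q$ to $S_r(k;\underline{J_1}\times \cdots \times \underline{J_d}\times A)$ kills every $f$ in which some $i \in \{1,\ldots,d\}$ appears more than once. The surviving $f$'s are in bijection with injections $\psi : I \hookrightarrow \{1,\ldots,r\}$, of which there are $r!/(r-s)!$, and they all collapse to a single symbol under the $\Sigma_r$-action, contributing a multiplicity of $r!/(r-s)!$.

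After this reduction, the abelian-variety factor of the surviving symbol is obtained from $\bigodot_{j \notin \varphi_I(I)}([a_j]_{k'} - [0]_{k'})$. Expanding by multilinearity yields a sum over $T \subseteq \{1,\ldots,r\} \setminus \varphi_I(I)$ and tuples $(j_1,\ldots,j_{r-s}) \in T^{r-s}$, and a standard M\"obius computation shows that $\sum_{T \supseteq \{j_1,\ldots,j_{r-s}\}}(-1)^{(r-s)-|T|}$ vanishes unless $(j_1,\ldots,j_{r-s})$ is a permutation of $\{1,\ldots,r\} \setminus \varphi_I(I)$; the $(r-s)!$ such permutations all collapse to the same symbol after symmetrization, giving another factor of $(r-s)!$. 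Combining, each pair $(I,\varphi_I)$ contributes
\[
    r!\cdot\bigl\{z_{i_1}^{\varphi_I(i_1)},\ldots,z_{i_s}^{\varphi_I(i_s)}, a_{j_1},\ldots,a_{j_{r-s}}\bigr\}_{k'/k}
\]
with $\{j_1,\ldots,j_{r-s}\} = \{1,\ldots,r\} \setminus \varphi_I(I)$.

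Summing over all pairs $(I,\varphi_I)$, we recognize the total as $r!$ times the multilinear expansion of $\{b_1,\ldots,b_r\}_{k'/k}$ along $\Alb_X = J_1 \times \cdots \times J_d \times A$ after the quotient $q$: the pair $(I,\varphi_I)$ corresponds precisely to a choice of which Jacobian factors appear and in which of the $r$ slots they appear, and this is exactly the data indexing the nonzero summands of $\{b_1,\ldots,b_r\}_{k'/k}$ in $S_r(k;\underline{J_1}\times\cdots\times\underline{J_d}\times A)$. The main obstacle we anticipate is the projection-formula bookkeeping needed to transport the residue-field symbols attached to closed points of $C_{i,k'}$ back to $k'$, so that the divisor expansion $z_i^t = \sum_y n_y [y]$ correctly reassembles into the class $z_i^t \in J_i(k')$; this step is formal but requires care to keep the field extensions straight.
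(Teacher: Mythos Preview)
Your overall strategy is sound and close in spirit to the paper's: both arguments push $W$ forward to $\Alb_X$, apply $\tilde\Phi_r$, expand by multilinearity, and then match the surviving combinatorics. The paper organizes this differently---it first reduces (via the partial multilinearity and partial projection-formula identities for $\Psi_r'$) to symbols in which each slot carries a single Jacobian or a single abelian-variety element, and only then runs the M\"obius count. Your direct computation on a general symbol is a legitimate alternative, and your identification of the sum over $(I,\varphi_I)$ with the multilinear expansion of $\{b_1,\ldots,b_r\}_{k'/k}$ after $q$ is exactly right.

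There is, however, a genuine gap in the middle of your argument. You assert that after applying $q$ the surviving functions $f:\{1,\ldots,r\}\to I\cup\{A\}$ are in bijection with injections $I\hookrightarrow\{1,\ldots,r\}$. This is not what $q$ does: the quotient only kills those $f$ in which some $i\in I$ appears \emph{twice}. It does not kill those $f$ in which some $i\in I$ fails to appear at all. Such $f$ contribute symbols that are independent of $y_{i}$, and they vanish only after you sum over the divisor representation $z_i^{\varphi_I(i)}=\sum_y n_y[y]$ and use that $\sum_y n_y=0$. You never invoke this degree-zero argument; without it your count of $r!/(r-s)!$ surviving terms is unjustified. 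This is precisely ``step 2'' in the paper's proof of Proposition~\ref{prop:Psi-in-Fr}, and it is essential to the proof of Corollary~\ref{cor: phi circ psi}.

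The projection-formula bookkeeping you flag at the end is real but routine; the paper handles it by the reduction in Remark~\ref{rem: Reduction of proofs}, which trades non-rational closed points $y$ on $(C_i)_{k'}$ for $k''$-rational points over a suitable enlargement $k''\supseteq k'$. If you carry out your direct computation, you will need either that reduction or the same use of the projection formula in $S_r$ to bring all symbols down to a common field before reassembling $\sum_y n_y([y]-[p_i])$ into $z_i$.
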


Assuming these lemmas, we are able to prove the main theorem.
\begingroup
\def\thethm{\ref{thm: main}}
\begin{thm}
    The map $\Phi_r$ induces an isomorphism
    \[
        \mathbb{Z}[1/r!]\otimes \frac{F^r(X\times A)}{F^{r+1}(X\times A)} \xrightarrow{\simeq} \mathbb{Z}[1/r!]\otimes S_r(k; \underline{J_1}\times \cdots \times \underline{J_d}\times A).
    \]
\end{thm}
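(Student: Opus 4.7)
The plan is to deduce the theorem almost formally from Lemmas \ref{lem: key-lem-Psi} and \ref{lem: key-lem-comp}; essentially all of the real work is concentrated in those two lemmas.

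First I would observe that $\Phi_r$ descends to an \emph{injective} homomorphism
\[
    \bar{\Phi}_r : F^r\ch_0(X)/F^{r+1}\ch_0(X) \longrightarrow S_r(k; \underline{J_1}\times \cdots \times \underline{J_d}\times A).
\]
This is immediate from the definition of the filtration: by construction $F^{r+1}\ch_0(X) = \bigcap_{j=0}^{r}\ker(\Phi_j) = F^r\ch_0(X)\cap \ker(\Phi_r)$, so $\Phi_r$ vanishes on $F^{r+1}\ch_0(X)$ (which gives a well-defined map on the quotient), while any class in $F^r\ch_0(X)$ annihilated by $\Phi_r$ already lies in $F^{r+1}\ch_0(X)$ (which gives injectivity).

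Next I would invoke Lemma \ref{lem: key-lem-Psi} to produce the homomorphism $\Psi_r$ in the opposite direction, and Lemma \ref{lem: key-lem-comp} to obtain the identity $\bar{\Phi}_r \circ \Psi_r = r!\cdot \text{id}_{S_r}$. In particular $r!\cdot S_r \subseteq \img(\bar{\Phi}_r)$, so the cokernel of $\bar{\Phi}_r$ is annihilated by $r!$ and vanishes after tensoring with $\Z[1/r!]$. Since $\Z[1/r!]$ is flat over $\Z$, tensoring also preserves the injectivity of $\bar{\Phi}_r$. Consequently $\Z[1/r!]\otimes \bar{\Phi}_r$ is simultaneously injective and surjective, i.e.\ an isomorphism, which is the statement of the theorem.

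The entire weight of the argument therefore sits in the two preceding lemmas, and I expect Lemma \ref{lem: key-lem-comp} to be the main obstacle. Given a symbol $\eta = \{(z_1^1,\ldots,z_d^1,a_1),\ldots,(z_1^r,\ldots,z_d^r,a_r)\}_{k'/k}$, one must compute $\Phi_r(\Psi_r'(\eta))$ using the factorization $\Phi_r = q\circ \tilde{\Phi}_r\circ (\varphi_X)_*$ established earlier. After push-forward, each cycle appearing in Definition \ref{def:Psi-map} becomes an iterated intersection of pullbacks and Pontryagin products inside $\ch_0(\Alb_X)$; expanding $\tilde{\Phi}_r$ multilinearly produces a large sum of tensor symbols in $S_r(k;\Alb_X)$, of which the projection $q$ kills every term that uses some Jacobian factor more than once. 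The surviving contributions are indexed exactly by subsets $I\subseteq\{1,\ldots,d\}$ and injections $\varphi_I: I\hookrightarrow\{1,\ldots,r\}$, and the expectation is that combining these summations with the symmetrization defining $S_r$ collapses everything to precisely $r!$ copies of $\eta$. By comparison, Lemma \ref{lem: key-lem-Psi} should be a more mechanical check: the projection formula relations in $\tilde{K}$ follow from the push-pull compatibility of $\pi_{k'/k}$ with the trace on abelian varieties, while the Weil relations are verified by base-changing to a function field $K/k$ and using that a principal divisor $\dv(f)$ supported on one factor has vanishing pullback under the relevant projection.
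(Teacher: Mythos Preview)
Your deduction of the theorem from Lemmas \ref{lem: key-lem-Psi} and \ref{lem: key-lem-comp} is correct and matches the paper's proof essentially verbatim: the paper writes the short exact sequence $0 \to F^r/F^{r+1} \xrightarrow{\Phi_r} S_r \to S_r/\img(\Phi_r) \to 0$, kills the cokernel via Lemma \ref{lem: key-lem-comp}, and then uses flatness of $\Z[1/r!]$ to retain injectivity.

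One remark on your closing speculation about the lemmas themselves: the paper does \emph{not} verify Lemma \ref{lem: key-lem-Psi} by checking projection-formula and Weil relations directly as you suggest. Instead it first proves (as a corollary of the computation establishing $\img(\Psi_r')\subseteq F^r$) that $\Phi_r\circ\Psi_r'$ is multiplication by $r!$ already at the level of the unquotiented source, and then argues that for $x$ in the kernel $H$ of the quotient map to $S_r$ one has $\Phi_r(\Psi_r(x)) = r!\,q(x) = 0$, whence $\Psi_r(x)=0$ by injectivity of $\Phi_r$ on $F^r/F^{r+1}$. So Lemma \ref{lem: key-lem-Psi} is obtained as a formal consequence of the same combinatorial computation underlying Lemma \ref{lem: key-lem-comp}, rather than by an independent relation-by-relation check.
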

\addtocounter{thm}{-1}
\endgroup

\begin{proof}
    We have a short exact sequence 
    \[
        \begin{tikzcd}
            0 \arrow{r} & \frac{F^r(X)}{F^{r+1}(X)} \arrow{r}{\Phi_r} & S_r(k; \underline{J_1}\times\cdots \times \underline{J_d}\times A) \arrow{r} & \frac{S_r(k; \underline{J_1}\times \cdots \times \underline{J_d}\times A)}{\text{Im}(\Phi_r)} \arrow{r} & 0.
        \end{tikzcd}
    \]
    But by Lemma \ref{lem: key-lem-comp}, we have that the last group is $r!$ torsion, so it will become 0 after tensoring with $\Z[1/r!]$. Thus we have
    \[
        \begin{tikzcd}
            \frac{F^r(X)}{F^{r+1}(X)}\otimes \mathbb{Z}[1/r!] \arrow{r}{\Phi_r} & S_r(k; \underline{J_1}\times \cdots \times\underline{J_d}\times A) \arrow{r} \otimes \mathbb{Z}[1/r!] &  0.
        \end{tikzcd}
    \]
    But $\Z[1/r!]$ is torsion free and hence it is a flat $\Z$ module. Thus, the first map is injective as well, which proves the isomorphism. 
\end{proof}

\subsection{Proof of Key Lemmas}\label{section: proofs}
The purpose of this section is to prove Lemmas \ref{lem: key-lem-Psi} and \ref{lem: key-lem-comp}. We begin with a few preliminary results which will establish important properties of $\Phi_r$ and $\Psi'_r$. These properties will be used throughout the rest of this article. The first fact is that the map $\Phi_r$ behaves well with the trace map on Somekawa $K$-groups.
\begin{prop}\label{prop:Phi-Tr-commute}
    Let $\alpha \in \ch_0(X_{k'})$. Then 
    \[
        \Phi_r((\pi_{k'/k})_*(\alpha)) = \Tr_{k'/k}(\Phi_r^{k'}(\alpha))
    \]
\end{prop}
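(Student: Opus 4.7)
The plan is to reduce to the case of a single closed point and then carefully match the two sides using the definition of the Somekawa trace together with one application of the projection formula.

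By linearity I would assume $\alpha = [z]$ for a closed point $z \in X_{k'}$. Let $y := \pi_{k'/k}(z) \in X$, so we have a tower of residue fields $k \subseteq k(y) \subseteq k(z)$. Since $\pi_{k'/k}$ is finite, the usual formula for pushforward of a closed point gives $(\pi_{k'/k})_*([z]) = [k(z):k(y)] \cdot [y]$, and therefore
\[
    \Phi_r((\pi_{k'/k})_*([z])) = [k(z):k(y)] \cdot \{\varphi_X(y), \ldots, \varphi_X(y)\}_{k(y)/k}.
\]

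Next I would unfold the right-hand side. By definition, $\Phi_r^{k'}([z]) = \{\varphi_{X_{k'}}(z), \ldots, \varphi_{X_{k'}}(z)\}_{k(z)/k'}$. The Somekawa trace map $\Tr_{k'/k}$ post-composes each entry with the natural projection $\pi_{\Alb} : \Alb_X \times_k k' \to \Alb_X$. Using the functoriality of the Albanese construction under base change (which, in our explicit description $\varphi_X = \iota_{p_1} \times \cdots \times \iota_{p_d} \times \mathrm{id}_A$, is a direct check), we have $\pi_{\Alb} \circ \varphi_{X_{k'}} = \varphi_X \circ \pi_{k'/k}$, and hence
\[
    \Tr_{k'/k}(\Phi_r^{k'}([z])) = \{\res_{k(z)/k(y)}(\varphi_X(y)), \ldots, \res_{k(z)/k(y)}(\varphi_X(y))\}_{k(z)/k},
\]
where each entry is the image of the $k(y)$-point $\varphi_X(y) \in \Alb_X(k(y))$ under restriction to $k(z)$.

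Now I would apply the projection formula in the tower $k(z)/k(y)/k$ to the last coordinate. Taking $a := \varphi_X(y) \in \Alb_X(k(y))$, the projection formula gives
\[
    \{\res a, \ldots, \res a\}_{k(z)/k} = \{a, \ldots, a, \Tr_{k(z)/k(y)}(\res a)\}_{k(y)/k} = [k(z):k(y)] \cdot \{a, \ldots, a\}_{k(y)/k},
\]
using that $\Tr_{k(z)/k(y)} \circ \res_{k(z)/k(y)}$ is multiplication by $[k(z):k(y)]$ followed by multilinearity in the last slot. This matches the computation of the left-hand side. The $r=0$ case is separate: both sides reduce to $[k(z):k]$ under the convention that $\Tr_{k'/k}$ on $\Z = S_0$ is multiplication by $[k':k]$.

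The only real subtlety is making the base-change identifications precise — in particular that $\varphi_{X_{k'}}$ is the base change of $\varphi_X$ and that viewing $\varphi_X(y)$ as a $k(z)$-point is the same as restricting from $k(y)$ to $k(z)$. Once these compatibilities are set up, a single application of the projection formula completes the proof.
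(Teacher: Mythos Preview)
Your proof is correct and follows essentially the same approach as the paper: reduce by linearity to a closed point, compute the pushforward as a degree-weighted point, unwind the trace using the base-change compatibility $\pi_{\Alb}\circ\varphi_{X_{k'}} = \varphi_X\circ\pi_{k'/k}$, and then match the two sides via a single application of the projection formula together with $\Tr\circ\res = [k(z):k(y)]$. The only cosmetic difference is that the paper pushes the left-hand side up to the $k(z)$-symbol while you pull the right-hand side down to the $k(y)$-symbol, but this is the same computation in reverse.
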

\begin{proof}
    Because all of these maps are $\Z$-linear, it suffices to take $\alpha = [x]_{k'}$ for $x \in X_{k'}$ a closed point. Let $b := \pi_{k'/k}(x) \in X$, $i: X \to \Alb_X$ be the closed embedding, and $i_{k'}: X_{k'} \to \Alb_{X_{k'}}$ be its base change. Then we get 
    \begin{align*}
        \Phi_r((\pi_{k'/k})_*([x]_{k'})) &= \Phi_r([k(x):k(b)][b]) \\
        &= [k(x): k(b)]\{i(b), \ldots, i(b)\}_{k(b)/k} \\
        &= \{[k(x): k(b)]i(b), \ldots, i(b)\}_{k(b)/k} \\
        &= \{\Tr_{k(x)/k(b)}\res_{k(x)/k(mb)}i(b), \ldots, i(b)\}_{k(b)/k} \\
        &= \{\res_{k(x)/k(b)}i(b), \ldots, \res_{k(x)/k(b)}i(b)\}_{k(x)/k} \qquad\text{by Projection Formula}
    \end{align*}
    and 
    \begin{align*}
        \Tr_{k'/k}\Phi^{k'}_{r}([x]_{k'}) &= \Tr_{k'/k}\{i_{k'}(x), \ldots, i_{k'}(x)\}_{k(x)/k'} \\
        &= \{\pi_{k'/k}\circ i_{k'}(x), \ldots, \pi_{k'/k}\circ i_{k'}(x)\}_{k(x)/k} \\
        &= \{i\circ \pi_{k'/k}\circ x, \ldots, i\circ \pi_{k'/k}\circ x\}_{k(x)/k} \\
        &= \{\res_{k(x)/k(b)}(i(b)), \ldots, \res_{k(x)/k(b)}(i(b))\}_{k(x)/k}.
    \end{align*}
\end{proof}

The second preliminary result says that $\Psi_r'$ satisfies a partial version of multilinearity (we will verify $\Psi_r$ is fully multilinear later). 

\begin{prop}\label{prop: partial multilinearity}
    Fix some $1\le t \le r$. For $1 \le j \le r$, $1 \le i \le d$ $a_j \in A(k')$, $z_i^j \in J_i(k')$ and $\tilde{z}_i^t \in J_i(k')$. Then 
    \begin{align*}
        &\Psi_r'\left(\{(z_1^1, \ldots, z_d^1, a_1), \ldots,(z_1^t+\tilde{z}_1^t,\ldots, z_d^t+\tilde{z}_d^t, a_t),\ldots, (z_1^r,\ldots, z_d^r, a_r)\}_{k'/k}\right) \\
        &= \Psi_r'\left(\{(z_1^1, \ldots, z_d^1, a_1), \ldots,(z_1^t,\ldots, z_d^t, 0),\ldots, (z_1^r,\ldots, z_d^r, a_r)\}_{k'/k}\right) \\
        &+ \Psi_r'\left(\{(z_1^1, \ldots, z_d^1, a_1), \ldots,(\tilde{z}_1^t,\ldots, \tilde{z}_d^t, a_t),\ldots, (z_1^r,\ldots, z_d^r, a_r)\}_{k'/k}\right).
    \end{align*}
    So $\Psi_r'$ satisfies multilinearity except for in the abelian variety components. 
\end{prop}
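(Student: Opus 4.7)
The plan is to unpack the defining sum of $\Psi_r'$ term by term, indexed by pairs $(I, \varphi_I)$ with $I = \{i_1, \ldots, i_s\} \subseteq \{1, \ldots, d\}$ and $\varphi_I : I \hookrightarrow \{1, \ldots, r\}$, and to verify that each term on the left-hand side decomposes exactly as required on the right. The key observation is that for any such pair, the index $t$ plays exactly one of two mutually exclusive roles in the associated cycle $W$: either $t$ lies in the image $\varphi_I(I)$, in which case there is a unique $i_k \in I$ with $\varphi_I(i_k) = t$ and the factor $pr_{i_k, k'}^*(z_{i_k}^t)$ appears in $W$ while the abelian entry $a_t$ does not; or $t \notin \varphi_I(I)$, in which case $a_t$ contributes the factor $[a_t]_{k'} - [0]_{k'}$ to the Pontryagin product while none of the Jacobian entries $z_i^t$ appear.

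Given this dichotomy, I would verify the identity on each $(I, \varphi_I)$-term separately. When $t \in \varphi_I(I)$, the relevant factor on the LHS is $pr_{i_k, k'}^*(z_{i_k}^t + \tilde z_{i_k}^t)$; using linearity of the pullback and multilinearity of the intersection product, this splits as the sum of a $z_{i_k}^t$-piece and a $\tilde z_{i_k}^t$-piece. The first piece matches the $(I, \varphi_I)$-term of the first summand on the right (whose slot-$t$ input is $(z_1^t, \ldots, z_d^t, 0)$), and the second matches the $(I, \varphi_I)$-term of the second summand (whose slot-$t$ input is $(\tilde z_1^t, \ldots, \tilde z_d^t, a_t)$). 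The value of the abelian entry in slot $t$ is immaterial here, since $a_t$ does not enter $W$ in this case.

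When $t \notin \varphi_I(I)$, the Pontryagin factor coming from slot $t$ on the LHS is $[a_t]_{k'} - [0]_{k'}$. On the right, the first summand has $0$ in the abelian slot $t$, so its Pontryagin factor becomes $[0]_{k'} - [0]_{k'} = 0$, annihilating the entire $(I, \varphi_I)$-term; the second summand retains $a_t$ and hence contributes a term identical to the LHS. Summing over all $(I, \varphi_I)$ and applying $(\pi_{k'/k})_*$ then yields the claimed identity.

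I do not expect any serious obstacle here; the proposition is essentially a bookkeeping consequence of the definition of $\Psi_r'$ together with linearity of the pullback and the identity $[0]_{k'} - [0]_{k'} = 0$. The only place that requires mild care is the pairing in the first case: one must ensure that the $z_{i_k}^t$-piece is matched with the first summand (whose Jacobian entries in slot $t$ are the untilded ones) and the $\tilde z_{i_k}^t$-piece with the second summand, for every $(I, \varphi_I)$. Once this is tracked, the proof reduces to verifying the identity $(I, \varphi_I)$-by-$(I, \varphi_I)$.
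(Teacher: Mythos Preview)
Your proposal is correct and follows essentially the same approach as the paper: both arguments hinge on partitioning the defining sum of $\Psi_r'$ according to whether $t$ lies in $\varphi_I(I)$, using linearity of the intersection product in the first case and the vanishing $[0]_{k'}-[0]_{k'}=0$ in the second. The only cosmetic difference is that the paper separates this into two sub-steps (first isolating the abelian contribution, then verifying linearity in the Jacobian entries when the abelian slot is zero), whereas you carry out both cases in a single pass.
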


\begin{proof}
    Since $\Psi_r'$ is invariant under the action of the symmetric group, it suffices to show the proposition for $t = 1$. We show this in two steps: For step one we show that 
    \begin{align*}
        &\Psi_r'\left(\{(z_1^1, \ldots, z_d^1, a_1),\ldots, (z_1^r,\ldots, z_d^r, a_r)\}_{k'/k}\right) \\
        &= \Psi_r'\left(\{(z_1^1, \ldots, z_d^1, 0),(z_1^2,\ldots, z_d^2, a_2),\ldots, (z_1^r,\ldots, z_d^r, a_r)\}_{k'/k}\right)\\
        &+ \Psi_r'\left(\{(0, \ldots, 0, a_1),(z_1^2,\ldots, z_d^2, a_2),\ldots, (z_1^r,\ldots, z_d^r, a_r)\}_{k'/k}\right)
    \end{align*}
    and for step two we show that 
    \begin{align*}
        &\Psi_r'\left(\{(z_1^1+ \tilde{z}_1^1, \ldots, z_d^1+\tilde{z}_d^1, 0),(z_1^2,\ldots, z_d^2, a_2),\ldots, (z_1^r,\ldots, z_d^r, a_r)\}_{k'/k}\right) \\
        &= \Psi_r'\left(\{(z_1^1, \ldots, z_d^1, 0),(z_1^2,\ldots, z_d^2, a_2),\ldots, (z_1^r,\ldots, z_d^r, a_r)\}_{k'/k}\right)\\
        &+ \Psi_r'\left(\{(\tilde{z}_z^1, \ldots, \tilde{z}_d^1, 0),(z_1^2,\ldots, z_d^2, a_2),\ldots, (z_1^r,\ldots, z_d^r, a_r)\}_{k'/k}\right).
    \end{align*}
    To verify step one, note that by definition
    \begin{align*}
        &\Psi_r'\left(\{(z_1^1, \ldots, z_d^1, 0),(z_1^2,\ldots, z_d^2, a_2),\ldots, (z_1^r,\ldots, z_d^r, a_r)\}_{k'/k}\right) \\
        &=\sum_{s = 0}^d \sum_{1\le i_1 < \ldots < i_s \le d}\sum_{\varphi_I: \{i_1, \ldots, i_s\} \hookrightarrow \{1, \ldots, r\}} (\pi_{L/k})_*(W)
    \end{align*}
    where 
    \begin{align*}
        &W = \left(pr_{1,k'}^*([p_1]_{k'})\cdots pr_{i_1, k'}^*(z^{\varphi_I(i_1)}_{i_1})\cdots pr_{i_s,k'}^*(z^{\varphi_I(i_s)}_{i_s})\cdots pr_{d,k'}^*([p_d]_{k'})\right) \\
        &\cdot pr_{A,k'}^*\left(\bigodot_{j \in \{1,\ldots r\}\setminus \varphi_I(I) }([a_j]_{k'} - [0]_{k'})\right).
    \end{align*}
    However, since $a_1 = 0$, if $1$ is not in the image of $\varphi_I$, then $([a_1]_{k'} - [0]_{k'}) = 0$, so we may take the sum $\sum_{\varphi_I: \{i_1, \ldots, i_s\} \hookrightarrow \{1, \ldots, r\}}$ to be over all injections such that $1$ is in the image of $\varphi_I$. Also,
    \begin{align*}
        &\Psi_r'\left(\{(0, \ldots, 0, a_1),(z_1^2,\ldots, z_d^2, a_2),\ldots, (z_1^r,\ldots, z_d^r, a_r)\}_{k'/k}\right) \\
        &\sum_{s = 0}^d \sum_{1\le i_1 < \ldots < i_s \le d}\sum_{\varphi_I: \{i_1, \ldots, i_s\} \hookrightarrow \{2, \ldots, r\}} (\pi_{L/k})_*(W).
    \end{align*}
    Note that the sum is now taken over all injections $\varphi_I: \{i_1,\ldots, i_s\} \hookrightarrow \{2,\ldots, r\}$ since if $1$ is in the image of $\varphi_I$ then a $z_t^1 = 0$ term appears in the intersection product. Therefore, 
    \[
        \Psi_r'\left(\{(z_1^1, \ldots, z_d^1, 0),(z_1^2,\ldots, z_d^2, a_2),\ldots, (z_1^r,\ldots, z_d^r, a_r)\}_{k'/k}\right)
    \]
    is the sum over all injections $\varphi_I$ such that $1$ is in its image and 
    \[
        \Psi_r'\left(\{(0,\ldots,0, a_1),(z_1^2,\ldots, z_d^2, a_2),\ldots, (z_1^r,\ldots, z_d^r, a_r)\}_{k'/k}\right)
    \]
    is the sum over all injections $\varphi_I$ such that $1$ is not in its image. Hence their sum is the sum over all injections $\varphi_I$ which is by definition
    \[
        \Psi_r'\left(\{(z_1^1, \ldots, z_d^1, a_1),\ldots, (z_1^r,\ldots, z_d^r, a_r)\}_{k'/k}\right).
    \]
    To verify step two note that 
    \[
        \Psi_r'\left(\{(z_1^1+ \tilde{z}_1^1, \ldots, z_d^1+\tilde{z}_d^1, 0),(z_1^2,\ldots, z_d^2, a_2),\ldots, (z_1^r,\ldots, z_d^r, a_r)\}_{k'/k}\right)
    \]
    is the sum over all injections $\varphi_I$ where $1$ is in the image by the argument above. Since the intersection product is linear in each factor, we get that 
    \begin{align*}
        &\Psi_r'\left(\{(z_1^1+ \tilde{z}_1^1, \ldots, z_d^1+\tilde{z}_d^1, 0),(z_1^2,\ldots, z_d^2, a_2),\ldots, (z_1^r,\ldots, z_d^r, a_r)\}_{k'/k}\right) \\
        &= \Psi_r'\left(\{(z_1^1, \ldots, z_d^1, 0),(z_1^2,\ldots, z_d^2, a_2),\ldots, (z_1^r,\ldots, z_d^r, a_r)\}_{k'/k}\right)\\
        &+ \Psi_r'\left(\{(\tilde{z}_z^1, \ldots, \tilde{z}_d^1, 0),(z_1^2,\ldots, z_d^2, a_2),\ldots, (z_1^r,\ldots, z_d^r, a_r)\}_{k'/k}\right).
    \end{align*}
\end{proof}

The next result that we need is that the map $\Psi_r'$ respects a particular case of the projection formula (we will verify that $\Psi_r$ respects the full version of the projection formula later). 

\begin{prop}\label{prop:partial proj form}
    Let $L/E/k$ be a tower of finite extensions and fix $1\le t \le r$. If $z_j^t \in Pic^0((C_i)_L) \cong J_j(L)$ for $1\le j\le d$ and for $i\ne t$, $z_j^i\in Pic^0((C_j)_E) \cong J_j(E)$  for $1\le j \le d$ and $a_i \in A(E)$, then
    \begin{align*}
        &\Psi_r'\{\res_{L/E}(z_1^1, \ldots, z_d^1, a_1), \ldots,(z_1^t,\ldots, z_d^t, 0) ,\ldots, \res_{L/E}(z_1^r,\ldots, z_d^r, a_r)\}_{L/k} \\
        &= \Psi_r'\{(z_1^1, \ldots, z_d^1, a_1), \ldots,\Tr_{L/E}(z_1^t,\ldots, z_d^t, 0) ,\ldots (z_1^r,\ldots, z_d^r, a_r)\}_{E/k}.
    \end{align*}
\end{prop}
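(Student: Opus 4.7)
The plan is to reduce the identity to a summand-by-summand comparison and then apply the projection formula in Chow theory for $\pi_{L/E}: X_L \to X_E$, together with flat base change.

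First, I would unfold the definition of $\Psi_r'$ on both sides. Observe that since the abelian-variety component at index $t$ is $0$, the factor $[a_t]_{k'} - [0]_{k'}$ in the Pontryagin product vanishes. Consequently, on either side only those tuples $(s,(i_1,\ldots,i_s),\varphi_I)$ for which $t \in \varphi_I(\{i_1,\ldots,i_s\})$ contribute. Fix such a tuple, write $\varphi_I(i_*) = t$, and denote the corresponding summands by $W_L$ on the LHS (a zero-cycle on $X_L$) and $W_E$ on the RHS (a zero-cycle on $X_E$). Since $\pi_{L/k} = \pi_{E/k}\circ \pi_{L/E}$, it suffices to prove $(\pi_{L/E})_*(W_L) = W_E$.

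Next, I would show that every factor of $W_L$ except for the index-$t$ curve factor $pr_{i_*,L}^*(z_{i_*}^t)$ is the $\pi_{L/E}^*$-pullback of the corresponding factor of $W_E$. For the curve factors $j \neq i_*$ and for the fixed points $pr_{j,L}^*([p_j]_L)$ this is automatic from $pr_{j,L}^* \circ \res_{L/E} = \pi_{L/E}^* \circ pr_{j,E}^*$ on $\text{Pic}^0((C_j)_E)$. For the Pontryagin product on the abelian variety side, apply flat base change to the cartesian square
\[
\begin{tikzcd}
A_L \times_L A_L \arrow{r}{m_L} \arrow{d}[swap]{\pi_{L/E}\times \pi_{L/E}} & A_L \arrow{d}{\pi_{L/E}} \\
A_E\times_E A_E \arrow{r}{m_E} & A_E
\end{tikzcd}
\]
to conclude that $\pi_{L/E}^*$ on $\ch_0(A_E)$ commutes with $\odot$, and note that $\pi_{L/E}^*([a_\ell]_E - [0]_E) = [a_\ell]_L - [0]_L$ since the restriction of an $E$-rational point to $L$ is a single $L$-rational point. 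Then the square for $pr_{A}$ gives $pr_{A,L}^* \circ \pi_{A,L/E}^* = \pi_{L/E}^* \circ pr_{A,E}^*$.

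Having written $W_L = \pi_{L/E}^*(V_E) \cdot pr_{i_*,L}^*(z_{i_*}^t)$, where $V_E$ is the product of all the non-$t$ factors on $X_E$, I would apply the projection formula for the proper morphism $\pi_{L/E}$:
\[
(\pi_{L/E})_*(W_L) = V_E \cdot (\pi_{L/E})_*\bigl(pr_{i_*,L}^*(z_{i_*}^t)\bigr).
\]
Finally, apply flat base change to the cartesian square
\[
\begin{tikzcd}
X_L \arrow{r}{\pi_{L/E}} \arrow{d}[swap]{pr_{i_*,L}} & X_E \arrow{d}{pr_{i_*,E}} \\
(C_{i_*})_L \arrow{r}{\pi^C_{L/E}} & (C_{i_*})_E
\end{tikzcd}
\]
with $\pi^C_{L/E}$ proper and $pr_{i_*,E}$ flat, giving $(\pi_{L/E})_* \circ pr_{i_*,L}^* = pr_{i_*,E}^* \circ (\pi^C_{L/E})_*$. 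Since the trace map on the Jacobian agrees with proper pushforward of divisor classes, $(\pi^C_{L/E})_*(z_{i_*}^t) = \Tr_{L/E}(z_{i_*}^t)$. Substituting yields $(\pi_{L/E})_*(W_L) = V_E \cdot pr_{i_*,E}^*(\Tr_{L/E}(z_{i_*}^t)) = W_E$, as desired.

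The main obstacle is purely bookkeeping: matching the two-parameter sums on each side (over subsets $I$ and injections $\varphi_I$) so the argument applies term-by-term, and keeping straight in which cartesian squares flat base change is being invoked — particularly for the abelian variety factor, where one must check that restriction along $L/E$ commutes with the Pontryagin product before using the ordinary projection formula.
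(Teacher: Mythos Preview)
Your proof is correct and follows essentially the same approach as the paper: both rest on the projection formula for $\pi_{L/E}$ together with the flat base change identities $pr_{j,L}^*\circ\pi_{L/E}^* = \pi_{L/E}^*\circ pr_{j,E}^*$ and $(\pi_{L/E})_*\circ pr_{i_*,L}^* = pr_{i_*,E}^*\circ(\pi^C_{L/E})_*$. The only difference is cosmetic: the paper first invokes Proposition~\ref{prop: partial multilinearity} to reduce to symbols with a single nonzero entry per component (so the sum over $(I,\varphi_I)$ collapses to one term), whereas you argue directly on the general symbol term-by-term and make explicit the compatibility of $\pi_{L/E}^*$ with the Pontryagin product, which the paper uses implicitly.
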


\begin{proof}
    This results follows from the projection formula for cycles which we recall here. Let $f: Y \to Z$ be a flat, proper morphism. Let $\alpha \in \ch^*(Y)$ and $\beta \in \ch^*(Z)$. Then 
    \[
        f_*(\alpha\cdot f^*(\beta)) = f_*(\alpha)\cdot \beta
    \]
    where $f_*$ is the proper pushforward and $f^*$ is the flat pullback.

    By Proposition \ref{prop: partial multilinearity} and the fact that $\Psi_r'$ is symmetric in the order of its components, it suffices to show that
    \begin{align*}
        &\Psi_r'\{(0,\ldots,0, z_{i_1}, 0, \ldots, 0), \ldots,\res_{L/E}(0,\ldots, 0, z_{i_s}, 0,\ldots 0), \\
        &\res_{L/E}(0,\ldots, a_1),\ldots, \res_{L/E}(0,\ldots, 0, a_{r-s})\}_{L/k} \\
        &= \Psi_r'\{\Tr_{L/E}(0,\ldots,0, z_{i_1}, 0, \ldots, 0), \ldots,(0,\ldots, 0, z_{i_s}, 0,\ldots 0),(0,\ldots, a_1),\ldots, (0,\ldots, 0, a_{r-s})\}_{E/k}.
    \end{align*}
    For some $1\le i_1 < \ldots < i_s \le d$. Note that
    \[
        \Tr_{L/E}(z_1^1,\ldots, z_d^1, 0) = ((\pi_{L/E})_*(z_1^1),\ldots, (\pi_{L/E})_*(z_d^1), 0)
    \]
    and 
    \[
        \res_{L/E}(z_1^1,\ldots, z_d^1, a) = ((\pi_{L/E})^*(z_1^1),\ldots, (\pi_{L/E})^*(z_d^1), \res_{L/E}(a)).
    \]
    Thus by the definition of $\Psi_r'$, we get 
    \begin{align*}
        &\Psi_r'\{(0,\ldots,0, z_{i_1}, 0, \ldots, 0), \ldots,\res_{L/E}(0,\ldots, 0, z_{i_s}, 0,\ldots 0), \\
        &\res_{L/E}(0,\ldots, a_1),\ldots, \res_{L/E}(0,\ldots, 0, a_{r-s})\}_{L/k} =  (\pi_{L/k})_*(W)
    \end{align*}
    where 
    \begin{align*}
        &W = \left(pr_{1,L}^*([p_1]_{L})\cdots pr_{i_1, L}^*(z_{i_1})\cdots pr_{i_2, L}^*(\pi_{L/E}^*z_{i_1})\cdots pr_{i_s,L}^*(\pi_{L/E}^*z_{i_s})\cdots pr_{d,L}^*([p_d]_{L})\right) \\
        &\cdot pr_{A,L}^*\left(\bigodot_{j \in \{1,\ldots r-s\}}([a_j]_{L} - [0]_{L})\right)
    \end{align*}
    Note that for all $t, pr_{t,L}^*\circ \pi_{L/E}^* = \pi_{L/E}^*\circ pr_{t, E}^*$ and similarly, $pr_{A,L}^*\circ \pi_{L/E}^* = \pi_{L/E}^*\circ pr_{A, E}^*$. Therefore, we can write 
    \begin{align*}
        &W = pr_{i_1,L}^*(z_{i_1})\cdot\pi_{L/E}^*\left[\vphantom{\bigodot_{j=1}}\left(pr_{1,E}^*([p_1]_{E})\cdots \widehat{pr_{i_1, E}^*(z_{i_1})}\cdots pr_{i_2, E}^*(z_{i_1})\cdots pr_{i_s,E}^*(z_{i_s})\cdots pr_{d,E}^*([p_d]_{E})\right)\right. \\
        &\left.\cdot pr_{A,E}^*\left(\bigodot_{j \in \{1,\ldots r-s\}}([a_j]_{E} - [0]_{E})\right)\right].
    \end{align*}
    Since $(\pi_{L/E})_*\circ pr_{1,L}^* = pr_{1,E}^* \circ (\pi_{L/E})_*$, by the projection formula we have 
    \begin{align*}
        &\Psi_r'\{(0,\ldots,0, z_{i_1}, 0, \ldots, 0), \ldots,\res_{L/E}(0,\ldots, 0, z_{i_s}, 0,\ldots 0), \\
        &\res_{L/E}(0,\ldots, a_1),\ldots, \res_{L/E}(0,\ldots, 0, a_{r-s})\}_{L/k} =  (\pi_{E/k})_*(Q)
    \end{align*}
    where 
    \begin{align*}
        &Q = pr_{i_1,E}^*((\pi_{L/E})_*z_{i_1})\cdot\vphantom{\bigodot_{j=1}}\left(pr_{1,E}^*([p_1]_{E})\cdots \widehat{pr_{i_1, E}^*(z_{i_1})}\cdots pr_{i_2, E}^*(z_{i_1})\cdots pr_{i_s,E}^*(z_{i_s})\cdots pr_{d,E}^*([p_d]_{E})\right)\\
        &\cdot pr_{A,E}^*\left(\bigodot_{j \in \{1,\ldots r-s\}}([a_j]_{E} - [0]_{E})\right).
    \end{align*}
    And this is equal to 
    \[
        \Psi_r'\{\Tr_{L/E}(0,\ldots,0, z_{i_1}, 0, \ldots, 0), \ldots,(0,\ldots, 0, z_{i_s}, 0,\ldots 0),(0,\ldots, a_1),\ldots, (0,\ldots, 0, a_{r-s})\}_{E/k}
    \]
    which completes the proof.
\end{proof}

\begin{rem}\label{rem: Reduction of proofs}
    The way that Propositions \ref{prop: partial multilinearity} and \ref{prop:partial proj form} will be used is that the properties we wish to verify below for the map $\Psi_r'$ can now be checked on symbols of the form
    \begin{align*}
        &\{(0, \ldots, 0, [y_{i_1}]-[p_{i_1}]_{k'}, 0 \ldots, 0), \ldots, \left(0, \ldots, 0,[y_{i_s}]-[p_{i_s}]_{k'}, 0 \ldots, 0\right), \\
        & (0,\ldots,0, a_1), \ldots, (0,\ldots, 0, a_{r-s})\}_{k'/k}
    \end{align*}
    where $1 \le i_1 < \ldots < i_s \le d$, and the $y_t$ are $k'$ rational points of $(C_t)_{k'}$. The way that this reduction is made is that starting with a general symbol, we can use Proposition \ref{prop: partial multilinearity} and the fact that $\Psi_r'$ is invariant under the action of the symmetric group to write its image under $\Psi_r'$ as a sum of cycles of the form 
    \begin{align*}
        &\Psi_r'(\{(0, \ldots, 0, [y_{i_1}]-[k(y_{i_1}):k'][p_{i_1}]_{k'}, 0 \ldots, 0), \ldots, \left(0, \ldots, 0,[y_{i_s}]-[k(y_{i_s}):k'][p_{i_s}]_{k'}, 0 \ldots, 0\right), \\
        & (0,\ldots,0, a_1), \ldots, (0,\ldots, 0, a_{r-s})\}_{k'/k})
    \end{align*}
    where each $y_t$ is a closed point of $(C_t)_{k'}$. But then we note that 
    \[
        (0, \ldots, 0, [y_{i_1}]-[k(y_{i_1}):k'][p_{i_1}]_{k'}, 0 \ldots, 0) = \Tr_{k(y_{i_1})/k}(0, \ldots, 0, [y_{i_1}]_{k(y_{i_1})}-[p_{i_1}]_{k(y_{i_1})}, 0 \ldots, 0).
    \]
    We can then apply Proposition \ref{prop:partial proj form}, replace $k'$ with $k(y_{i_1})$ and repeat the process with the remaining indices $i_2,\ldots, i_s$ to note that $\Psi_r'$ applied to the original symbol is a sum of cycles of the form 
    \begin{align*}
        &\Psi_r'\{(0, \ldots, 0, [y_{i_1}]-[p_{i_1}]_{k'}, 0 \ldots, 0), \ldots, \left(0, \ldots, 0,[y_{i_s}]-[p_{i_s}]_{k'}, 0 \ldots, 0\right), \\
        & (0,\ldots,0, a_1), \ldots, (0,\ldots, 0, a_{r-s})\}_{k'/k}
    \end{align*}
    where each $y_t$ is a $k'$ rational point of $(C_t)_{k'}$. Finally note that if $i_p = i_q$ for $p \ne q$ i.e. $y_{i_p}$ and $y_{i_q}$ are points on the same curve, then the definition of $\Psi_r'$ implies that the image of that symbol is $0$. So we may take $1 \le i_1 < \ldots < i_s \le d$. 
\end{rem}

The last preliminary result that we will need is the following alternative definition for $\Psi_r'$.

\begin{prop}\label{prop: psi alt def}
    An equivalent definition of $\tilde{\Psi'}_r$ is given by
    \begin{align*}
        &\left\{\left(\sum_{y_1}n_{y_1}^1[y_1], \ldots, \sum_{y_d}n_{y_d}^1[y_d],a_1\right), \ldots, \left(\sum_{y_1}n_{y_1}^r[y_1], \ldots, \sum_{y_d}n_{y_d}^r[y_d],a_r\right)\right\}_{k'/k} \mapsto \\
        &\sum_{s = 0}^d \sum_{1\le i_1 < \ldots < i_s\le d}\sum_{\varphi_I : I \hookrightarrow \{1,\ldots, r\}} \sum_{y_{i_1}} \ldots \sum_{y_{i_s}} n_{y_{i_1}}^{\varphi_I(i_1)} \cdots n_{y_{i_s}}^{\varphi_I(i_s)} \sum_{j=0}^{r-s} (-1)^{r-s-j} \\
        &(\pi_{k'/k})_*\left(\sum_{1 \le \nu_1 < \ldots \widehat{\varphi_I(I)} \ldots < \nu_j \le r} [p_1\times_{k'} \ldots \times_{k'} y_{i_1}\times_{k'} \ldots\times_{k'} y_{i_s}\times_{k'} \ldots\times_{k'} p_d\times_{k'} (a_{\nu_1}+ \ldots + a_{\nu_j})]_{k'} \right).
    \end{align*}
     Note that in general, $p_1\times_{k'} \ldots, y_{i_1}\times_{k'} \ldots\times_{k'} y_{i_k}\times_{k'} \ldots\times_{k'} p_d\times_{k'} (a_{\nu_1}+ \ldots + a_{\nu_j})$ is not a closed point of $(C_1)_{k'}\times_{k'} \ldots \times_{k'}(C_d)_{k'}$, but we take the cycle class which the subscheme defines (see \cite{fulton2013intersection} Chapter 1 section 5). Also, the notation $1 \le \nu_1 < \ldots \widehat{\varphi_I(I)} \ldots < \nu_j \le r$ means that $\{\nu_1,\ldots, \nu_j\} \subset \{1,\ldots, r\} \setminus \varphi_I(I)$. 
\end{prop}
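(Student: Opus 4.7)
The plan is to verify the alternative formula by expanding the definition of $\Psi_r'$ given in Definition \ref{def:Psi-map} in two stages: first unfolding the Pontryagin product on the abelian variety factor, and then unfolding the intersection product on the curve factors.

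For the abelian variety side, I would use the basic identity $[a]_{k'}\odot [b]_{k'}=[a+b]_{k'}$ for the Pontryagin product on $\ch_0(A_{k'})$. Expanding the product $\bigodot_{j\in T}([a_j]_{k'}-[0]_{k'})$ over $T:=\{1,\ldots,r\}\setminus \varphi_I(I)$ (which has size $r-s$) by distributivity gives
\[
    \bigodot_{j\in T}([a_j]_{k'}-[0]_{k'}) = \sum_{S\subseteq T}(-1)^{|T|-|S|}\Bigl[\sum_{\nu\in S}a_\nu\Bigr]_{k'},
\]
and parametrising $S=\{\nu_1<\cdots<\nu_j\}\subseteq T$ produces exactly the inner alternating sum $\sum_{j=0}^{r-s}(-1)^{r-s-j}\sum_{\nu_1<\cdots<\nu_j}[a_{\nu_1}+\cdots+a_{\nu_j}]_{k'}$ appearing in the claimed formula, where the sum is restricted to indices in the complement of $\varphi_I(I)$.

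For the curve side, I would use the linearity of the flat pullback. Writing $z_{i_t}^{\varphi_I(i_t)}=\sum_{y_{i_t}}n_{y_{i_t}}^{\varphi_I(i_t)}[y_{i_t}]$ for each $t=1,\ldots,s$ and pulling the scalars $n_{y_{i_1}}^{\varphi_I(i_1)}\cdots n_{y_{i_s}}^{\varphi_I(i_s)}$ out of the intersection product reduces the cycle $W$ inside $(\pi_{k'/k})_*(W)$ to a $\mathbb Z$-linear combination of terms of the form
\[
    pr_{1,k'}^*[p_1]_{k'}\cdots pr_{i_1,k'}^*[y_{i_1}]\cdots pr_{i_s,k'}^*[y_{i_s}]\cdots pr_{d,k'}^*[p_d]_{k'}\cdot pr_{A,k'}^*\bigl[a_{\nu_1}+\cdots+a_{\nu_j}\bigr]_{k'}.
\]
Since the $pr_{t,k'}$ and $pr_{A,k'}$ are the projections from a product, they are flat and the intersection of pullbacks along the various factor projections computes the cycle class of the scheme-theoretic fibre product (this is the standard identity $\pi_1^*[\alpha_1]\cdots \pi_n^*[\alpha_n]=[\alpha_1\times\cdots\times\alpha_n]$ on a product, cf.\ Chapter~1, §5 of \cite{fulton2013intersection}); this produces the symbol $[p_1\times_{k'}\cdots\times_{k'} y_{i_1}\times_{k'}\cdots\times_{k'}y_{i_s}\times_{k'}\cdots\times_{k'}p_d\times_{k'}(a_{\nu_1}+\cdots+a_{\nu_j})]_{k'}$ in $\ch_0(X_{k'})$.

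The main obstacle I expect is the third step, where one must justify the identification of the intersection product of pullbacks with the cycle class of a fibre-product subscheme in the generality needed here: the points $y_{i_t}$ are closed points of $(C_{i_t})_{k'}$ whose residue fields may be nontrivial extensions of $k'$, so the fibre product need not be a single closed point of $X_{k'}$, only a subscheme whose cycle class one takes. Once this identification is in place, collecting the scalars $n_{y_{i_t}}^{\varphi_I(i_t)}$, the signs $(-1)^{r-s-j}$, and the pushforward $(\pi_{k'/k})_*$ and regrouping the sums over $s$, $I$, $\varphi_I$, the $y_{i_t}$, $j$, and $\{\nu_1,\ldots,\nu_j\}$ yields precisely the formula stated in the proposition, so the remainder is a bookkeeping exercise.
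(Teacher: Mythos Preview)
Your proposal is correct and follows essentially the same route as the paper: expand the Pontryagin product by distributivity into the alternating sum over subsets of $\{1,\dots,r\}\setminus\varphi_I(I)$, expand the curve-side intersection product by $\Z$-linearity of pullback, and then invoke the identity $pr_Z^*(\alpha)\cdot pr_Y^*(\beta)=[\alpha\times\beta]$ to package the result as a fibre-product cycle class. The only difference is the order in which you do the two expansions, and the obstacle you flag about non-rational $y_{i_t}$ is exactly the caveat the paper addresses in the statement by taking the cycle class of the subscheme rather than insisting on a closed point.
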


\begin{proof}
    Let $z_i^j = \sum_{y_i}n^j_{y^i}[y_i]$. Then by the multilinearity of the intersection product, we have 
    \begin{align*}
        &\left(pr_{1,k'}^*([p_1]_{k'})\cdots pr_{i_1,k'}^*(z^{\varphi_I(i_1)}_{i_1})\cdots pr_{i_s, k'}^*(z^{\varphi_I(i_s)}_{i_s})\cdots pr_{d, k'}^*([p_d]_{k'})\right) \\
        &\cdot pr_{A,k'}^*\left(\bigodot_{j \in \{1,\ldots, r\}\setminus \varphi_I(I)} ([a_j]_{k'} - [0]_{k'})\right) \\
        &= \sum_{y_{i_1}} \ldots \sum_{y_{i_s}} n_{y_{i_1}}^{\varphi_I(i_1)} \cdots n_{y_{i_s}}^{\varphi_I(i_s)} \\
        &(pr_{1, k'}^*([p_1]_{k'})\cdots pr_{i_1, k'}^*([y_{i_1}])\cdots pr_{i_s, k'}^*([y_{i_s}])\cdots pr_{d, k'}^*([p_d]_{k'})) \\
        &\cdot pr_{A, k'}^*\left(\bigodot_{j \in \{1,\ldots, r\}\setminus \varphi_I(I)} ([a_j]_{k'} - [0]_{k'})\right).
    \end{align*}
    Note that 
    \begin{align*}
        & \bigodot_{j \in \{1,\ldots, r\}\setminus \varphi_I(I)} ([a_j]_{k'} - [0]_{k'}) \\
        &=([a_1]_{k'}-[0]_{k'})\odot \cdots \odot \widehat{([a_{\varphi_I(i_1)}]_{k'}-[0]_{k'})}\odot \cdots  \odot \widehat{([a_{\varphi_I(i_s)}]_{k'} - [0]_{k'})} \odot \cdots \odot ([a_r]_{k'}-[0]_{k'}) \\
        &= \sum_{j=0}^{r-s} (-1)^{r-s-j}\sum_{1 \le \nu_1 < \ldots \widehat{\varphi_I(I)} \ldots < \nu_j \le r}[a_{\nu_1}]_{k'}\odot \cdots \odot [a_{\nu_j}]_{k'}\odot ([0]_{k'})^{r-s-j}\qquad\text{By multilinearity of $\odot$} \\
        &= \sum_{j=0}^{r-s} (-1)^{r-s-j} \sum_{1 \le \nu_1 < \ldots \widehat{\varphi_I(I)} \ldots < \nu_j \le r} [a_{\nu_1}+ \ldots + a_{\nu_j}]_{k'} \qquad \text{By definition of $\odot$.}
    \end{align*}

    Then it remains to observe that if $Z, Y$ are smooth projective varieties and $\alpha \in \ch(Z)$, $\beta \in \ch(Y)$, then $[\alpha\times \beta] = pr_Z^*(\alpha)\cdot pr_Y^*(\beta) \in \ch(Z\times Y)$.
\end{proof}

Now let us turn to the proofs of the key lemmas. We have a well-defined map
\[
    \Psi_r': \bigoplus_{k'/k}\prod_{i=1}^r J_1(k')\oplus \cdots \oplus J_d(k') \oplus A(k') \to \ch_0(X).
\]
Our current goal is to show that this induces a well-defined map
\[
    S_r(k; \underline{J_1}\times \cdots \times \underline{J_d}\times A) \to F^r\ch_0(X)/F^{r+1}\ch_0(X).
\]
The first step is to show that the image of $\Psi_r'$ lands in $F^r\ch_0(X)$.
\begin{prop}\label{prop:Psi-in-Fr}
    The image of $\Psi'_r$ is contained in $F^r\ch_0(X)$. That is, $\Phi_t \circ \tilde{\Psi'}_r = 0$ for $t< r$.
\end{prop}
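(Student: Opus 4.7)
The plan is to reduce to the special symbols described in Remark \ref{rem: Reduction of proofs}, compute $\Phi_t\circ\Psi_r'$ explicitly on those, and exhibit a combinatorial cancellation when $t<r$.

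By Remark \ref{rem: Reduction of proofs} (which consolidates Propositions \ref{prop: partial multilinearity} and \ref{prop:partial proj form}), it suffices to check that $\Phi_t\circ\Psi_r'$ vanishes on a symbol of the form
\[
\alpha = \{(0,\ldots,0,[y_{i_1}]-[p_{i_1}]_{k'},0,\ldots,0),\ldots,(0,\ldots,0,[y_{i_s}]-[p_{i_s}]_{k'},0,\ldots,0),(0,\ldots,0,a_1),\ldots,(0,\ldots,0,a_{r-s})\}_{k'/k}
\]
with $1\le i_1<\cdots<i_s\le d$, $y_{i_l}\in (C_{i_l})(k')$, and $a_m\in A(k')$. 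For such an $\alpha$, the outer sum in Definition \ref{def:Psi-map} collapses to a single nonzero term, forced by the vanishing of $[0]-[0]$ and of any zero curve component: only the injection $\varphi_I$ with $I=\{i_1,\ldots,i_s\}$ and $\varphi_I(i_l)=l$ contributes. Thus
\[
\Psi_r'(\alpha)=(\pi_{k'/k})_*\!\Bigl[\prod_{j\notin I}pr_{j,k'}^*([p_j]_{k'})\cdot\prod_{l=1}^s pr_{i_l,k'}^*([y_{i_l}]-[p_{i_l}]_{k'})\cdot pr_{A,k'}^*\!\Bigl(\bigodot_{m=1}^{r-s}\!([a_m]_{k'}-[0]_{k'})\Bigr)\Bigr].
\]

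Now apply $\Phi_t$ and use Proposition \ref{prop:Phi-Tr-commute} to pull the trace outside. Expanding the Pontryagin product as in Proposition \ref{prop: psi alt def}, the right-hand side becomes a signed sum of classes $[Q_1\times\cdots\times Q_d\times b]_{k'}$ with $Q_j=p_j$ for $j\notin\{i_1,\ldots,i_s\}$, $Q_{i_l}\in\{y_{i_l},p_{i_l}\}$, and $b=\sum_{m\in S}a_m$ for some $S\subseteq\{1,\ldots,r-s\}$. Applying $\tilde\Phi_t\circ(\varphi_X)_*$ to each point gives a diagonal $S_t$-symbol with slot value $\varphi_X(Q\times b)=([Q_1]-[p_1],\ldots,[Q_d]-[p_d],b)$. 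Composing with the isomorphism $g_t$ and the projection $q$ onto $S_t(k;\underline{J_1}\times\cdots\times\underline{J_d}\times A)$, we expand multilinearly over all assignments of the $t$ slots to either the Jacobian factors $J_1,\ldots,J_d$ (each appearing at most once, by the definition of the target group) or to the $A$-factor.

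The combinatorial core of the argument is then: for each $l$, summing over $Q_{i_l}\in\{y_{i_l},p_{i_l}\}$ with the signs from $[y_{i_l}]-[p_{i_l}]_{k'}$ gives zero unless the $i_l$-th Jacobian is actually used in some slot (the nonused case produces a factor independent of $\epsilon_l$ that cancels against $(-1)^{1-\epsilon_l}$). Similarly, the contribution of a particular multiset $T\subseteq\{1,\ldots,r-s\}$ of $A$-indices appearing across the slots has coefficient $\sum_{S\supseteq T}(-1)^{r-s-|S|} = (1-1)^{r-s-|T|}$, which vanishes unless $T$ is all of $\{1,\ldots,r-s\}$. So a nonzero contribution requires all $s$ Jacobians of $C_{i_1},\ldots,C_{i_s}$ to appear in distinct slots and all $r-s$ of the $a_m$'s to appear across the remaining slots; this forces $t\ge s+(r-s)=r$, contradicting $t<r$.

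The main obstacle will be a careful bookkeeping of the signs and of the two independent multilinear expansions (the curve differences $[y_{i_l}]-[p_{i_l}]$ expanded in the Chow group before applying $\Phi_t$, versus the $A$-factor $b=\sum_{m\in S}a_m$ expanded inside $S_t$), and verifying that both cancellations reduce to the same universal identity $\sum_{S\supseteq T}(-1)^{n-|S|}=[T=\{1,\ldots,n\}]$ once the bookkeeping is in place.
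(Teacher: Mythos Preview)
Your proposal is correct and follows essentially the same strategy as the paper: reduce via Remark~\ref{rem: Reduction of proofs} to the special symbols, observe that only the single injection $\varphi_I$ with $I=\{i_1,\dots,i_s\}$ and $\varphi_I(i_l)=l$ contributes to $\Psi_r'(\alpha)$, apply Proposition~\ref{prop:Phi-Tr-commute}, and then expand multilinearly inside $S_t$. The two cancellation mechanisms you identify---the curve part (if some $J_{i_l}$ is not used in any slot, the sum over $Q_{i_l}\in\{y_{i_l},p_{i_l}\}$ with signs $\pm 1$ vanishes) and the abelian-variety part (the alternating sum $\sum_{S\supseteq T}(-1)^{r-s-|S|}$ vanishes unless $T=\{1,\dots,r-s\}$)---are exactly the paper's ``step~2'' and ``step~1'' respectively, and your pigeonhole conclusion $t\ge s+(r-s)=r$ is the same endgame. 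One small correction: $T$ should be the \emph{set} (not multiset) of indices $\{m_1,\dots,m_u\}$ appearing across the $A$-slots; with that reading your binomial identity is the paper's computation $\sum_j(-1)^{r-s-j}M_{\vec\ell,j}=0$.

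The only substantive difference is organizational: the paper first carries out the $d=1$ case in full detail, then redoes the general case with heavier notation and an explicit case split on $N_{i_1}+\cdots+N_{i_s}\ge s$ versus $<s$ (these cases correspond precisely to whether all $s$ Jacobians are used or not). Your presentation collapses the two cases into a single saturation argument, which is cleaner, but you would still need to write out the bookkeeping you flag in your final paragraph to make the proof complete.
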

We first prove this proposition in the case when $d=1$ to ease the notation. This will contain the main ideas of the proof in the general case with additional complexity in one aspect which we will indicate.

\begin{proof}[Proof in the case of one curve]
    We show that $\Phi_{r-1}\circ \Psi_r' = 0$ as the same argument works for any $t < r$. By Propositions \ref{prop: partial multilinearity} and \ref{prop:partial proj form} (see Remark \ref{rem: Reduction of proofs}) it suffices to show that
    \[
        \Phi_{r-1}\circ \Psi_r'\left(\{(0,a_1), \ldots, (0,a_r)\}_{k'/k}\right) = 0
    \]
    and
    \[
        \Phi_{r-1}\circ \Psi_r'\left(\left\{\left(\sum_{y\in C} n_y [y],0\right),(0,a_2) \ldots, (0,a_r)\right\}_{k'/k}\right) = 0
    \]
    where $y$ is a $k'$-rational point whenever $n_y\ne 0$. 

    Let us start with step 1 which is showing that 
    \[
        \Phi_{r-1}\circ \Psi_r'\left(\{(0,a_1), \ldots, (0,a_r)\}_{k'/k}\right) = 0
    \]
    \begin{rem}\label{rem: Part 1 of landing in F^r}
        This part of the argument is combinatorial and relies on keeping track of the symbols which appear in the image of $\Phi_{r-1}$. A similar counting argument we apply here works in the analogous part of the general case. The point is if we have symbols of length $<r$ and $\ge r$ many variables in the $a_i$, then sums of the above form will become $0$. The difference is that in the general case we will have multiple components of the symbols which have elements coming from curves so the counting argument will require more complicated notation.
    \end{rem}
    We have by definition that 
    \[
        \Psi_r'\left(\{(0,a_1), \ldots, (0,a_r)\}_{k'/k}\right) = \sum_{j=0}^{r}(-1)^{r-j}(\pi_{k'/k})_*\left(\sum_{1\le \nu_1< \ldots < \nu_j \le r}[p_1\times_{k'} (a_{\nu_1}+\ldots + a_{\nu_j})]_{k'}\right).
    \]
    By Proposition \ref{prop:Phi-Tr-commute}, we have 
    \begin{align*}
        &\Phi_{r-1}\left( \sum_{j=0}^{r}(-1)^{r-j}(\pi_{k'/k})_*\left(\sum_{1\le \nu_1< \ldots < \nu_j \le r}[p_1\times_{k'} (a_{\nu_1}+\ldots + a_{\nu_j})]_{k'}\right)\right) \\
        &= \sum_{j=0}^{r}(-1)^{r-j}\left(\sum_{1\le \nu_1< \ldots < \nu_j \le r}\{\underbrace{(0, a_{\nu_1}+\ldots + a_{\nu_j}), \ldots, (0, a_{\nu_1}+\ldots + a_{\nu_j})}_{r-1}\}_{k'/k}\right).
    \end{align*}
    Using multilinearity in $S_r(k; \underline{J_1}\times A)$, we can rewrite the above as
    \[
        \sum_{j=0}^{r}(-1)^{r-j}\left(\sum_{1\le \nu_1< \ldots < \nu_j \le r}\sum_{\overrightarrow{\ell} \in \{\nu_1,\ldots, \nu_j\}^{r-1}}\{(0,a_{\ell_1}), \ldots, (0,a_{\ell_{r-1}})\}_{k'/k}\right).
    \]
    For each $\overrightarrow{h} \in \{1,\ldots, r\}^{r-1}$ and $0 \le j \le r$, define 
    \[
        M_{\overrightarrow{h}, j} := |\{X \subset \{1,\ldots, r\} : |X| = j, \overrightarrow{h} \in X^{r-1}\}|.
    \] 
    That is, $M_{\overrightarrow{h}, j}$ is the number of subsets of $\{1,\ldots, r\}$ of size $j$ that contain every component of $\overrightarrow{h}.$ Note that for a fixed $\overrightarrow{h} \in \{1,\ldots, r\}$, the number of times the symbol
    \[
        \{(0, a_{h_1}), \ldots, (0, a_{h_{r-1}})\}_{k'/k}
    \]
    appears in the sum 
    \[
        \sum_{1\le \nu_1< \ldots < \nu_j \le r}\sum_{\overrightarrow{\ell} \in \{\nu_1,\ldots, \nu_j\}^{r-1}}\{(0,a_{\ell_1}), \ldots, (0,a_{\ell_{r-1}})\}_{k'/k}
    \]
    is equal to $M_{\overrightarrow{h}, j}$.
    
    Putting the above together, we get that
    \begin{align*}
        &\Phi_{r-1}\left( \sum_{j=0}^{r}(-1)^{r-j}(\pi_{k'/k})_*\left(\sum_{1\le \nu_1< \ldots < \nu_j \le r}[p_1\times_{k'} (a_{\nu_1}+\ldots + a_{\nu_j})]_{k'}\right)\right) \\
        &= \sum_{\overrightarrow{h} \in \{1, \ldots, r\}^{r-1}}\left(\sum_{j=0}^r(-1)^{r-j} M_{\overrightarrow{h}, j} \{(0,a_{h_1}), \ldots, (0,a_{h_{r-1}})\}_{k'/k}\right).
    \end{align*}
    Thus, it suffices to show that for fixed $\overrightarrow{h} \in \{1,\ldots, r\}^{r-1}$, $\sum_{j=0}^r (-1)^{r-j} M_{\overrightarrow{h}, j} = 0$. A counting argument shows that
    \[
        M_{\overrightarrow{h}, j} = {r-|\{h_1,\ldots, h_{r-1}\}| \choose j-|\{h_1,\ldots, h_{r-1}\}|},
    \]
    so we get 
    \[
        \sum_{j=0}^r (-1)^{r-j} M_{\overrightarrow{h}, j} = \sum_{j=|\{h_1,\ldots, h_{r-1}\}|}^r (-1)^{r-j} {r-|\{h_1,\ldots, h_{r-1}\}| \choose j-|\{h_1,\ldots, h_{r-1}\}|} = \sum_{j=0}^n (-1)^{n-j} {n \choose j} = 0
    \]
    where $n = r-|\{h_1,\ldots, h_{r-1}\}|$. The last equality is true, because $n \ge 1$. This completes step 1.
    Now for step 2 we need to show that 
    \[
        \Phi_{r-1}\circ \Psi_r'\left(\left\{\left(\sum_{y\in C} n_y [y],0\right),(0,a_2) \ldots, (0,a_r)\right\}_{k'/k}\right) = 0
    \]
    where $y$ is a $k'$-rational point whenever $n_y\ne 0$. 
    By definition we have that 
    \begin{align*}
        &\Psi_r'\left(\left\{\left(\sum_{y\in C} n_y [y],0\right),(0,a_2) \ldots, (0,a_r)\right\}_{k'/k}\right)\\
        &= \left(\sum_{y}n_{y}\sum_{j=0}^{r-1}(-1)^{r- 1-j}(\pi_{k'/k})_*\left(\sum_{2\le \nu_1< \ldots \ldots  < \nu_j \le r}[y\times_{k'}(a_{\nu_1}+\ldots + a_{\nu_j})]_{k'}\right)\right).
    \end{align*}
    
    \begin{rem}
        In the analogous part of the argument for the general case, the same idea will apply. The idea is that we use the fact that the cycles coming from the curves have degree $0$ along with multilinearity in $S_r$ to kill the symbols. 
    \end{rem}
    In what follows, we will denote $[y]_{k'} - [0]_{k'}$ by $y_{k'}$.
    By Proposition \ref{prop:Phi-Tr-commute}, we have
    \begin{align*}
        &\Phi_{r-1}\left(\sum_{y}n_{y}\sum_{j=0}^{r-1}(-1)^{r-1-j}(\pi_{k'/k})_*\left(\sum_{2\le \nu_1< \ldots < \nu_j \le r}[y\times_{k'} (a_{\nu_1}+\ldots + a_{\nu_j})]_{k'}\right)\right) \\
        &=\Tr_{k'/k}\Phi^{k'}_{r-1}\left(\sum_{y}n_{y}\sum_{j=0}^{r-1}(-1)^{r-1-j}\left(\sum_{2\le \nu_1< \ldots  < \nu_j \le r}[y\times_{k'} (a_{\nu_1}+\ldots + a_{\nu_j})]_{k'}\right)\right) \\ 
        &= \sum_{j=0}^{r-1}(-1)^{r-1-j}\sum_{2\le \nu_1< \ldots  < \nu_j \le r} \sum_{y}n_{y}^i\{\underbrace{(y_{k'}, a_{\nu_1}+\ldots + a_{\nu_j}), \ldots, (y_{k'}, a_{\nu_1}+\ldots + a_{\nu_j})}_{r-1}\}_{k'/k} \\
        &= \sum_{j=0}^{r-1}(-1)^{r-1-j} \sum_{2\le \nu_1< \ldots < \nu_j \le r}\\
        &\sum_{y}n_y\{\underbrace{(0, a_{\nu_1}+\ldots + a_{\nu_j})+(y_{k'}, 0) , \ldots, (0, a_{\nu_1}+\ldots + a_{\nu_j})+(y_{k'}, 0)}_{r-1}\}_{k'/k}.
    \end{align*}
    After expanding and reordering using the multilinearity and symmetry of $S_r(k;\underline{J_1}\times A)$, we can rewrite the above as 
    \begin{align*}
        &\sum_{j=0}^{r-1}(-1)^{r-1-j}\sum_{2\le \nu_1< \ldots < \nu_j \le r}\sum_{\ell=0}^{r-1}{r-1 \choose \ell}\\
        &\sum_{y}n_{y}\{\underbrace{(y_{k'}, 0), \ldots, (y_{k'}, 0)}_\ell, \underbrace{(0, a_{\nu_1}+\ldots + a_{\nu_j}), \ldots,(0, a_{\nu_1}+\ldots + a_{\nu_j})}_{r-1-\ell} \}_{k'/k}.
    \end{align*}
    Thus it suffices to show for each $0 \le \ell \le r-1$, 
    \begin{align*}
        &\sum_{j=0}^{r-1}(-1)^{r-1-j} \sum_{2\le \nu_1< \ldots < \nu_j \le r}\\
        &\sum_{y}n_y\{\underbrace{(y_{k'}, 0), \ldots, (y_{k'}, 0)}_\ell, \underbrace{(0, a_{\nu_1}+\ldots + a_{\nu_j}), \ldots,(0, a_{\nu_1}+\ldots + a_{\nu_j})}_{r-1-\ell} \}_{k'/k} = 0.
    \end{align*}
    In the case when $\ell=0$, the above terms in the sum do not depend on $y$, so the above sum is $0$ since $\sum_y n_y = 0$ by assumption. 
    \\So, it suffices to show for fixed $1 \le \ell \le r-1$, 
    \begin{align*}
        &\sum_{j=0}^{r-1}(-1)^{r-1-j} \sum_{2\le \nu_1< \ldots  < \nu_j \le r} \\
        & \{\underbrace{(y_{k'}, 0), \ldots, (y_{k'}, 0)}_\ell, \underbrace{(0, a_{\nu_1}+\ldots + a_{\nu_j}), \ldots,(0, a_{\nu_1}+\ldots + a_{\nu_j})}_{r-1-\ell} \}_{k'/k} = 0.
    \end{align*}
    However, now we have $r-1$ $a_i$ terms from the abelian variety and $< r-1$ spots in the symbol, so the same argument from step 1 applies (see Remark \ref{rem: Part 1 of landing in F^r}), and this sum is zero. This completes step 2.
\end{proof}
Now we provide the proof in the case when $d$ is any positive integer. The structure of the proof is the same, but there is heavier notation and added complexity in the parts of the proof which we indicate.

\begin{proof}[Proof of the general case]
    It suffices to prove the propostion for symbols of the form
    \begin{align*}
        &\left\{\left(0, \ldots, 0,\sum_{y_{i_1}} n_{y_{i_1}} [y_{i_1}],0 \ldots, ,0\right), \ldots, \left(0, \ldots, 0,\sum_{y_{i_s}} n_{y_{i_s}} [y_{i_s}], 0,\ldots,0\right),\right. \\
        &\left.(0,\ldots, a_1),\ldots, (0,\ldots, a_{r-s})\vphantom{\sum_{i=1}^\infty}\right\}_{k'/k}
    \end{align*}
    where each $y_{i_t}$ is a $k'$ rational point of $(C_t)_{k'}$ if $n_{y_{i_t}} \ne 0$ (see Remark \ref{rem: Reduction of proofs}). Because we will now only deal with rational points we will denote cycles of the form 
    \[ 
        [p_1\times_{k'} \ldots \times_{k'} y_{i_1}\times_{k'} \ldots\times_{k'} y_{i_s}\times_{k'} \ldots\times_{k'} p_d\times_{k'} a]_{k'}
    \]
    unambiguously by 
    \[ 
        [(p_1,\ldots, y_{i_1}, \ldots,y_{i_s}, \ldots, p_d, a)]_{k'}.
    \]
    Thus, by the alternative definition of the $\Psi_r'$ map (Proposition \ref{prop: psi alt def}) we need to show that 
    \begin{align*}
        & \sum_{y_{i_1}} \ldots \sum_{y_{i_s}} n_{y_{i_1}} \cdots n_{y_{i_s}} \sum_{j=0}^{r-s} (-1)^{r-s-j} \\
        &\Phi_{r-1}\left((\pi_{k'/k})_*\left(\sum_{1 \le \nu_1 <\ldots < \nu_j \le r} [(p_1,\ldots, y_{i_1}, \ldots,y_{i_s}, \ldots, p_d, a_{\nu_1}+\ldots +a_{\nu_j})]_{k'} \right)\right) = 0.
    \end{align*}
    By Proposition \ref{prop:Phi-Tr-commute}, we have 
    \begin{align*}
        &\Phi_{r-1}\left( (\pi_{k'/k})_*\left( [(p_1,\ldots, y_{i_1}, \ldots,y_{i_s}, \ldots, p_d, a_{\nu_1}+\ldots +a_{\nu_j})]_{k'}\vphantom{\sum_{i=1}} \right)\right) \\
        &= \Tr_{k'/k}\left(\Phi^{k'}_{r-1}\left( [(p_1,\ldots, y_{i_1}, \ldots,y_{i_s}, \ldots, p_d, a_{\nu_1}+\ldots +a_{\nu_j})]_{k'}\vphantom{\sum_{i=1}}\right)\vphantom{\sum_{i=1}}\right). \\
    \end{align*}
    Therefore, it suffices to assume that $k' = k$. 
    Denote $y_t := [y_t] - [p_t]$. Then by definition,
    \begin{align*}
        &\Phi_{r-1}\left([(p_1, \ldots, y_{i_1}, \ldots, y_{i_s}, \ldots, p_d, a_{\nu_1}+ \ldots + a_{\nu_j})] \vphantom{\sum_{i=1}}\right)\\
        &= \left\{\left(0, \ldots, 0, y_{i_1}, \ldots, y_{i_s}, \ldots, 0, a_{\nu_1}+\ldots + a_{\nu_j}\vphantom{\sum_{i=1}^\infty}\right), \ldots \right.\\
        &\left.\left(0, \ldots, 0, y_{i_1}, \ldots, y_{i_s}, \ldots, 0, a_{\nu_1}+\ldots + a_{\nu_j}\vphantom{\sum_{i=1}^\infty}\right)\right\}_{k/k}.
    \end{align*}
    So the above becomes 
    \begin{align*}
        & \sum_{y_{i_1}} \ldots \sum_{y_{i_s}} n_{y_{i_1}} \cdots n_{y_{i_s}} \sum_{j=0}^{r-s} (-1)^{r-s-j}\sum_{1 \le \nu_1 < \ldots < \nu_j \le r-s} \\
        &\left\{\left(0, \ldots, 0, y_{i_1}, \ldots, y_{i_s}, \ldots, 0, a_{\nu_1}+\ldots + a_{\nu_j}\vphantom{\sum_{i=1}^\infty}\right), \ldots \right.\\
        &\left.\left(0, \ldots, 0, y_{i_1}, \ldots, y_{i_s}, \ldots, 0, a_{\nu_1}+\ldots + a_{\nu_j}\vphantom{\sum_{i=1}^\infty}\right)\right\}_{k/k}.
    \end{align*}
    Let $N_{i_1}, \ldots, N_{i_s} \in \Z_{\ge 0}$ such that $N_{i_1} + \ldots + N_{i_s} \le r-1$. Then denote 
    \begin{align*}
        X(N_{i_1}, \ldots, N_{i_s},a_{\nu_1}, \ldots, a_{\nu_j}) &:= \{\underbrace{(0, \ldots, 0, y_{i_1}, 0 \ldots, 0), \ldots, (0, \ldots, 0, y_{i_1}, 0 \ldots, 0)}_{N_{i_1}},\ldots \\
        &\underbrace{(0, \ldots, 0, y_{i_s}, 0 \ldots, 0), \ldots, (0, \ldots, 0, y_{i_s}, 0 \ldots, 0)}_{N_{i_s}}, \\
        & \underbrace{(0, \ldots, a_{\nu_1}+\ldots + a_{\nu_j}), \ldots, (0, \ldots, a_{\nu_1}+\ldots + a_{\nu_j})}_{r-1 - N_{i_1}- \ldots - N_{i_s}}\}_{k/k}.
    \end{align*}
    Using this notation, by multilinearity and symmetry of symbols, the above becomes 
    \begin{align*}
        & \sum_{y_{i_1}} \ldots \sum_{y_{i_s}} n_{y_{i_1}} \cdots n_{y_{i_s}} \sum_{j=0}^{r-s} (-1)^{r-s-j}\sum_{1 \le \nu_1 < \ldots < \nu_j \le r-s} \\
        &\sum_{N_{i_1}+\ldots + N_{i_s}\le r-1}\beta_{N_{i_1},\ldots, N_{i_s}} X(N_{i_1}, \ldots, N_{i_s},a_{\nu_1}, \ldots, a_{\nu_j}),
    \end{align*}
    where $\beta_{N_{i_1},\ldots, N_{i_s}}:= \binom{r-1}{N_{i_1}}\binom{r-1-N_{i_1}}{N_{i_2}}\cdots \binom{r-1-N_{i_1}-\ldots - N_{i_{s-1}}}{N_{i_s}}$
    
    We will proceed in two steps. In the first step we consider the case when $N_{i_1}+\ldots + N_{i_s}\ge s$. Specifically, we prove that for fixed $y_{i_1}, \ldots, y_{i_s}$ and fixed $N_{i_1}+\ldots + N_{i_s}\ge s$, we have
    \begin{align*}
        &\sum_{j=0}^{r-s} (-1)^{r-s-j} \sum_{1 \le \nu_1 < \ldots < \nu_j \le r-s} X(N_{i_1},\ldots, N_{i_s}, a_{\nu_1},\ldots, a_{\nu_j}) = 0.
    \end{align*}
    In the second step we consider the case when $N_{i_1}+\ldots + N_{i_s}< s$. Specifically, we prove that for fixed $j$, $1\le \nu_1 < \ldots < \nu_j \le r-s$, and $N_{i_1}+\ldots + N_{i_s}< s$
    \begin{align*}
        &\sum_{y_{i_1}} \ldots \sum_{y_{i_s}} n_{y_{i_1}} \cdots n_{y_{i_s}} X(N_{i_1},\ldots, N_{i_s}, a_{\nu_1},\ldots, a_{\nu_j}) = 0.
    \end{align*}
    \begin{rem}
        These two cases mirror those of the proof when $d=1$. The case of $N_{i_1}+\ldots +N_{i_s}\ge s$ is the same argument that is used in the first part of the proof of one curve. We use a counting argument to show that all the relevant symbols vanish. The case of $N_{i_1}+\ldots +N_{i_s}< s$ is analogous to step 2 of the proof when $d=1$. Here we again use multilinearity and the fact that the cycles coming from the curve are degree $0$ to kill all of the symbols.
    \end{rem}
    Let us first consider the case when $N_{i_1}+\ldots +N_{i_s} \ge s$. For $\overrightarrow{\ell} \in \{1,\ldots, r-s\}^{r-1-(N_{i_1} + \ldots + N_{i_s})}$ define symbols
    \begin{align*}
        X(N_{i_1}, \ldots, N_{i_s},a_1,\ldots, a_{r-s})_{\overrightarrow{\ell}} := &\{\underbrace{(0, \ldots, 0, y_{i_1}, 0 \ldots, 0), \ldots, (0, \ldots, 0, y_{i_1}, 0 \ldots, 0)}_{N_{i_1}},\ldots \\
        &\underbrace{(0, \ldots, 0, y_{i_s}, 0 \ldots, 0), \ldots, (0, \ldots, 0, y_{i_s}, 0 \ldots, 0)}_{N_{i_s}}, \\
        & (0, \ldots,0, a_{\ell_1}), \ldots, (0, \ldots,0, a_{\ell_{r-1-(N_{i_1} + \ldots + N_{i_s})}})\}_{k/k}.
    \end{align*}
    Note that by multilinearity of symbols in $S_r(k; \underline{J_1}\times\ldots \times \underline{J_d}\times A)$, we have that 
    \[
        X(N_{i_1}, \ldots, N_{i_s},a_1,\ldots, a_r) = \sum_{\overrightarrow{\ell} \in \{\nu_1,\ldots, \nu_j\}^{r-1-(N_{i_1}+\ldots + N_{i_s})}}X(N_{i_1}, \ldots, N_{i_s},a_1,\ldots, a_{r-s})_{\overrightarrow{\ell}}.
    \]
    Thus, we have that 
    \begin{align*}
        &\sum_{j=0}^{r-s} (-1)^{r-s-j} \sum_{1 \le \nu_1 < \ldots < \nu_j \le r-s} X(N_{i_1},\ldots, N_{i_s}, a_{\nu_1},\ldots, a_{\nu_j}) \\
        &= \sum_{j=0}^{r-s} (-1)^{r-s-j} \sum_{1 \le \nu_1 < \ldots < \nu_j \le r-s}\sum_{\overrightarrow{\ell} \in \{\nu_1,\ldots, \nu_j\}^{r-1-(N_{i_1}+\ldots + N_{i_s})}}X(N_{i_1}, \ldots, N_{i_s},a_1,\ldots, a_{r-s})_{\overrightarrow{\ell}}.
    \end{align*}
    However, there will be some cancellation, because the same vector $\overrightarrow{\ell}$ can appear for two different sets $\{\nu_1,\ldots, \nu_j\}$ and $\{\nu_1,\ldots, \nu_{j'}\}$. To account for this, define the constants 
    \[
        M_{\overrightarrow{\ell}, j} := \left|\{Y \subset \{1,\ldots, r-s\} : |Y| = j, \overrightarrow{\ell} \in Y^{r-1-(N_{i_1}+\ldots + N_{i_s})}\}\right|.
    \]
    Then 
    \begin{align*}
        &\sum_{j=0}^{r-s}(-1)^{r-s-j} \sum_{1 \le \nu_1 < \ldots < \nu_{j} \le r} X(N_{i_1}, \ldots, N_{i_s}, a_{\nu_1},\ldots, a_{\nu_j}) \\
        &= \sum_{\overrightarrow{\ell} \in \{1,\ldots, r-s\}^{r-1-(N_{i_1}+\ldots + N_{i_s})}}\sum_{j=0}^{r-s}(-1)^{r-s-j} M_{\overrightarrow{\ell}, j} X(N_{i_1}, \ldots, N_{i_s}, a_1,\ldots, a_r)_{\overrightarrow{\ell}}.
    \end{align*}
    
    Thus it suffices to show that for fixed $\overrightarrow{\ell}$ 
    \[
        \sum_{j=0}^{r-s}(-1)^{r-s-j} M_{\overrightarrow{\ell}, j} = 0.
    \]
    We have 
    \[
        M_{\overrightarrow{\ell}, j} = {r-s-|\overrightarrow{\ell}| \choose j - |\overrightarrow{\ell}|},
    \]
    so 
    \[
        \sum_{j=0}^{r-s}(-1)^{r-s-j} M_{\overrightarrow{\ell}, j} = \sum_{j=0}^{r-s}(-1)^{r-s-j} {r-s-|\overrightarrow{\ell}| \choose j - |\overrightarrow{\ell}|} = \sum_{k=0}^n (-1)^{n-k}{n \choose k}
    \]
    where $n = r-s-|\overrightarrow{\ell}|$ and $k = j - |\overrightarrow{\ell}|$. The last sum is 0, because $|\overrightarrow{\ell}| < r-s$ by assumption, so $n$ is positive. This completes step $1$.

    \begin{rem}[Important]\label{rem: proof of corollary}
        Suppose that we had applied $\Phi_r$ instead of $\Phi_{r-1}$. Then in the above we would have ended up with the sum 
        \[
            \sum_{\overrightarrow{\ell} \in \{1,\ldots, r-s\}^{r-(N_{i_1}+\ldots + N_{i_s})}}\sum_{j=0}^{r-s}(-1)^{r-s-j} M_{\overrightarrow{\ell}, j} X(N_{i_1}, \ldots, N_{i_s}, a_1,\ldots, a_r)_{\overrightarrow{\ell}}
        \]
        rather than 
        \[
            \sum_{\overrightarrow{\ell} \in \{1,\ldots, r-s\}^{r-1-(N_{i_1}+\ldots + N_{i_s})}}\sum_{j=0}^{r-s}(-1)^{r-s-j} M_{\overrightarrow{\ell}, j} X(N_{i_1}, \ldots, N_{i_s}, a_1,\ldots, a_r)_{\overrightarrow{\ell}}.
        \]
        In this case we could have had $|\overrightarrow{\ell}| = r-s$, which would cause
        \[
            \sum_{j=0}^{r-s}(-1)^{r-s-j}M_{\overrightarrow{\ell},j} = 1. 
        \]
        Since all $X(N_{i_1}, \ldots, N_{i_s}, a_1,\ldots, a_r)_{\overrightarrow{\ell}}$ are equal as long as $|\overrightarrow{\ell}| = r-s$ by symmetry of symbols, we would get that 
        \begin{align*}
        &\sum_{j=0}^{r-s} (-1)^{r-s-j} \sum_{1 \le \nu_1 < \ldots < \nu_j \le r-s} X(N_{i_1},\ldots, N_{i_s}, a_{\nu_1},\ldots, a_{\nu_j}) \\
        &= (r-s)!\{(0, \ldots, 0, y_{i_1}, 0 \ldots, 0),\ldots (0, \ldots, 0, y_{i_s}, 0 \ldots, 0), (0, \ldots, a_1), \ldots, (0, \ldots, a_{r-s})\}_{k/k}.
    \end{align*}
        
    \end{rem}

    Now let us consider the case when $N_{i_1}+\ldots +N_{i_s} < s$. This implies that for some $1\le t \le s$, $N_{i_t}=0$. In other words, the symbol 
    \[
        X(N_{i_1}, \ldots, N_{i_s}, a_{\nu_1}, \ldots, a_{\nu_j})
    \]
    is independent of $y_{i_t}$. Therefore, 
    \begin{align*}
        &\sum_{y_{i_1}} \ldots \sum_{y_{i_s}} n_{y_{i_1}} \cdots n_{y_{i_s}} X(N_{i_1},\ldots, N_{i_\ell}, a_{\nu_1},\ldots, a_{\nu_j}) \\
        &= \sum_{y_{i_t}}n_{y_{i_t}}\left(\sum_{y_{i_1}} \ldots \sum_{y_{i_s}} n_{y_{i_1}} \cdots n_{y_{i_s}} X(N_{i_1},\ldots, N_{i_\ell}, a_{\nu_1},\ldots, a_{\nu_j})\right) = 0
    \end{align*}
    because $\sum_{y_{i_t}}n_{y_{i_t}} = 0$ by assumption. This completes step $2$. 
\end{proof}

As a corollary of the proof we have the following:
\begin{cor}\label{cor: phi circ psi}
    The composition $\Phi_r\circ \Psi_r'$ is given by multiplication by $r!$. 
\end{cor}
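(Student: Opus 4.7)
The plan is to re-run the proof of Proposition \ref{prop:Psi-in-Fr} verbatim, substituting $\Phi_r$ for $\Phi_{r-1}$ throughout, and to isolate the single point where the vanishing argument breaks. First I would apply Propositions \ref{prop: partial multilinearity} and \ref{prop:partial proj form} together with the reduction of Remark \ref{rem: Reduction of proofs} to restrict to symbols supported on $s$ distinct curves $C_{i_1},\ldots,C_{i_s}$ with $k'$-rational points $y_{i_t}\in(C_{i_t})(k')$ and abelian components $a_1,\ldots,a_{r-s}\in A(k')$. Writing $\Psi_r'$ via Proposition \ref{prop: psi alt def} and commuting $\Phi_r$ past $(\pi_{k'/k})_*$ using Proposition \ref{prop:Phi-Tr-commute}, I would then expand by multilinearity and symmetry of symbols in $S_r(k;\Alb_X)$ exactly as in the general-case proof, obtaining a combination of the symbols $X(N_{i_1},\ldots,N_{i_s},a_{\nu_1},\ldots,a_{\nu_j})$ weighted by the multinomial coefficients $\beta_{N_{i_1},\ldots,N_{i_s}}$. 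The only numerical difference from the original computation is that each such symbol now has length $r$, so the auxiliary vector $\overrightarrow{\ell}$ ranges over $\{1,\ldots,r-s\}^{r-(N_{i_1}+\cdots+N_{i_s})}$ rather than over $\{1,\ldots,r-s\}^{r-1-(N_{i_1}+\cdots+N_{i_s})}$.

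Then I would carry out the two cases of Proposition \ref{prop:Psi-in-Fr}. The case $N_{i_1}+\cdots+N_{i_s}<s$ transfers unchanged: some $N_{i_t}=0$, so the symbol is independent of $y_{i_t}$ and vanishes under $\sum_{y_{i_t}}n_{y_{i_t}}=0$. In the remaining case, the counting identity $\sum_{j}(-1)^{r-s-j}M_{\overrightarrow{\ell},j}=\sum_{k=0}^{n}(-1)^{n-k}\binom{n}{k}$ with $n=r-s-|\overrightarrow{\ell}|$ still kills every term for which $n\ge 1$. The new phenomenon, already flagged in Remark \ref{rem: proof of corollary}, is that $\overrightarrow{\ell}$ may now attain the maximal length $r-s$; when this occurs $n=0$ and the alternating sum equals $1$, and by the symmetry of symbols in $S_r$ all $(r-s)!$ such vectors contribute the same symbol, giving a combined factor of $(r-s)!$.

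Finally I would combine this with the quotient map to $S_r(k;\underline{J_1}\times\cdots\times\underline{J_d}\times A)$: the decomposition via $g_r$ annihilates every summand in which some Jacobian $J_{i_t}$ appears with multiplicity $\ge 2$, so of the surviving terms only the tuples with $N_{i_t}=1$ for every $t$ contribute. For such a tuple the coefficient is $\beta_{1,\ldots,1}=\binom{r}{1}\binom{r-1}{1}\cdots\binom{r-s+1}{1}=r!/(r-s)!$, which when multiplied by the $(r-s)!$ from symmetry yields exactly $r!$ times the expected symbol, establishing the corollary. The only real obstacle is bookkeeping: pairing each $(N_{i_1},\ldots,N_{i_s})$ with its correct combinatorial contribution and confirming that the quotient by $g_r$ eliminates precisely the summands with repeated Jacobians; no idea beyond those already used in the proof of Proposition \ref{prop:Psi-in-Fr} is needed.
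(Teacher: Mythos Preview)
Your proposal is correct and follows essentially the same route as the paper: re-run the computation of Proposition~\ref{prop:Psi-in-Fr} with $\Phi_r$ in place of $\Phi_{r-1}$, observe via Remark~\ref{rem: proof of corollary} that the alternating sum survives precisely when $|\overrightarrow{\ell}|=r-s$, and collect the resulting factor $(r-s)!\cdot\beta_{1,\ldots,1}=r!$. The only difference is in how you dispose of the tuples $(N_{i_1},\ldots,N_{i_s})$ with $\sum N_{i_t}=s$ but not all equal to $1$: you invoke the quotient $q$ to $S_r(k;\underline{J_1}\times\cdots\times\underline{J_d}\times A)$ (some $N_{i_t}\ge 2$ forces a repeated Jacobian), whereas the paper observes that such a tuple must also have some $N_{i_{t'}}=0$ and so is already killed by the degree-zero argument of step~2. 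Both eliminations are valid and the remaining bookkeeping is identical.
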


\begin{rem}
    Note that the domain of $\Phi_r\circ \Psi_r'$ is 
    \[
        \bigoplus_{k'/k}\prod_{i=1}^r J_1(k')\oplus \cdots \oplus J_d(k')\oplus A(k')
    \]
    while the codomain is
    \[
        S_r(k; \underline{J_1}\times\ldots \times \underline{J_d}\times A).
    \]
    So the precise statement of the corollary is that for $\alpha \in  \bigoplus_{k'/k}\prod_{i=1}^r J_1(k')\oplus \cdots \oplus J_d(k')\oplus A(k')$, $\Phi_r\circ \Psi_r'(\alpha) = r!q(\alpha)$ where 
    \[
        q: \bigoplus_{k'/k}\prod_{i=1}^r J_1(k')\oplus \cdots \oplus J_d(k')\oplus A(k') \to S_r(k; \underline{J_1}\times\ldots \times \underline{J_d}\times A)
    \]
    is the natural quotient. 
\end{rem}

\begin{proof}
    Using the notation of the above proof, we computed that 
    \begin{align*}
        \Phi_r\circ \Psi_r'&\left\{\left(0, \ldots, 0,\sum_{y_{i_1}} n_{y_{i_1}} [y_{i_1}],0 \ldots, ,0\right), \ldots, \left(0, \ldots, 0,\sum_{y_{i_s}} n_{y_{i_s}} [y_{i_s}], 0,\ldots,0\right),\right. \\
        &\left.(0,\ldots, a_1),\ldots, (0,\ldots, a_{r-s})\vphantom{\sum_{i=1}^\infty}\right\}_{k'/k} \\
        &= \sum_{y_{i_1}} \ldots \sum_{y_{i_s}} n_{y_{i_1}} \cdots n_{y_{i_s}} \sum_{j=0}^{r-s} (-1)^{r-s-j}\sum_{1 \le \nu_1 < \ldots < \nu_j \le r-s} \\
        &\sum_{N_{i_1}+\ldots + N_{i_s}\le r}\beta_{N_{i_1},\ldots, N_{i_s}}X(N_{i_1}, \ldots, N_{i_s},a_{\nu_1}, \ldots, a_{\nu_j}).
    \end{align*}
    However, if $N_{i_1}+\ldots + N_{i_s} > s$, then we fall into step 1 in the above proof and if any $N_{i_1}, \ldots, N_{i_s}$ are $0$, then we fall into step $2$. Hence, we must have $N_{i_1} = \ldots = N_{i_s} = 1$. In this case $\beta_{1,\ldots ,1} = \frac{r!}{(r-s)!}$. Also by the above proof (see Important Remark \ref{rem: proof of corollary}), in this case
    \begin{align*}
        &\sum_{j=0}^{r-s} (-1)^{r-s-j}\sum_{1 \le \nu_1 < \ldots < \nu_j \le r-s} X(N_{i_1}, \ldots, N_{i_s},a_{\nu_1}, \ldots, a_{\nu_j}) \\
        &= (r-s)!\{(0, \ldots, 0, y_{i_1, k'}, 0 \ldots, 0),\ldots (0, \ldots, 0, y_{i_s, k'}, 0 \ldots, 0), (0, \ldots, a_1), \ldots, (0, \ldots, a_{r-s})\}_{k'/k}.
    \end{align*}
    Putting this together, we get 
    \begin{align*}
        \Phi_r\circ \Psi_r'&\left\{\left(0, \ldots, 0,\sum_{y_{i_1}} n_{y_{i_1}} [y_{i_1}],0 \ldots, ,0\right), \ldots, \left(0, \ldots, 0,\sum_{y_{i_s}} n_{y_{i_s}} [y_{i_s}], 0,\ldots,0\right),\right. \\
        &\left.(0,\ldots, a_1),\ldots, (0,\ldots, a_{r-s})\vphantom{\sum_{i=1}^\infty}\right\}_{k'/k} \\
        &= \sum_{y_{i_1}} \ldots \sum_{y_{i_s}} n_{y_{i_1}} \cdots n_{y_{i_s}}  \frac{r!}{(r-s)!}(r-s)!\\
        &\{(0, \ldots, 0, y_{i_1, k'}, 0 \ldots, 0),\ldots (0, \ldots, 0, y_{i_s, k'}, 0 \ldots, 0), (0, \ldots, a_1), \ldots, (0, \ldots, a_{r-s})\}_{k'/k}\\
        &=r!\{(0, \ldots, 0, \sum_{y_{i_1}}n_{y_{i_1}}[y_{i_1}], 0 \ldots, 0),\ldots (0, \ldots, 0, \sum_{y_{i_s}}n_{y_{i_s}}[y_{i_s}], 0 \ldots, 0), \\
        &(0, \ldots, a_1), \ldots, (0, \ldots, a_{r-s})\}_{k'/k}
    \end{align*}
    where the last equality follows by multilinearity of symbols.
\end{proof}

\begin{defn}
    The map 
    \[
        \Psi_r: \bigoplus_{k'/k} \prod_{i=1}^r J_1(k') \oplus \cdots \oplus J_d(k') \oplus A(k') \to \frac{F^r\ch_0(X)}{F^{r+1}\ch_0(X)}
    \]
    is defined as the composite
    \[
        \bigoplus_{k'/k} \prod_{i=1}^r J_1(k') \oplus \cdots \oplus J_d(k') \oplus A(k') \xrightarrow{\Psi'_r} F^r\ch_0(X) \to \frac{F^r\ch_0(X)}{F^{r+1}\ch_0(X)}.
    \]
\end{defn}

Now we are able to complete the proofs of the key lemmas.

\begingroup
\def\thelem{\ref{lem: key-lem-Psi}}
\begin{lem}
    The map $\Psi_r$ induces a group homomorphism 
    \[
        S_r(k; \underline{J_1}\times \cdots \times \underline{J_d}\times A) \to F^r\ch_0(X)/F^{r+1}\ch_0(X).
    \]
\end{lem}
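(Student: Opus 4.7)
The plan is to verify that $\Psi_r'$ satisfies enough Somekawa relations to descend to a homomorphism on $S_r(k; \underline{J_1}\times\cdots\times\underline{J_d}\times A)$. Specifically, I would check, modulo $F^{r+1}\ch_0(X)$, that the induced map respects (i) $\Sigma_r$-symmetry, (ii) multilinearity in each $J_i$ and in the $A$ factor, (iii) the projection formula, and (iv) the Weil reciprocity relation. Since $\Psi_r'$ already lands in $F^r\ch_0(X)$ by Proposition \ref{prop:Psi-in-Fr}, composing with the quotient $F^r\ch_0(X)\to F^r\ch_0(X)/F^{r+1}\ch_0(X)$ then produces the claimed $\Psi_r$. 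The injectivity condition built into the outer sums in Definition \ref{def:Psi-map} is exactly what ensures compatibility with the ``$\underline{J_i}$'' restriction on the target.

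Symmetry is immediate from the definition since the outer sums over $I$ and $\varphi_I$ are manifestly invariant under permutations of $\{1,\ldots,r\}$. Multilinearity and the partial projection formula in the Jacobian components are exactly Propositions \ref{prop: partial multilinearity} and \ref{prop:partial proj form}, so what remains is the abelian variety slot. For multilinearity in $A$, I would start from the Pontryagin identity
\[
[a_t+\tilde a_t]_{k'}-[0]_{k'} = ([a_t]_{k'}-[0]_{k'})+([\tilde a_t]_{k'}-[0]_{k'})+([a_t]_{k'}-[0]_{k'})\odot([\tilde a_t]_{k'}-[0]_{k'}),
\]
so that replacing $a_t$ by $a_t+\tilde a_t$ introduces a correction cycle carrying one additional Pontryagin factor of the form $([a]-[0])$. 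The plan is then to show that any such correction lies in $F^{r+1}\ch_0(X)$ by a counting argument mimicking Step 1 of Proposition \ref{prop:Psi-in-Fr}, since the extra factor effectively raises the ``abelian-variety degree'' by one. The upgrade of the projection formula to the abelian slot follows by combining this observation with the classical projection formula for the pushforward $\ch_0(A_L)\to\ch_0(A_E)$, reducing it to the partial case of Proposition \ref{prop:partial proj form}.

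The main obstacle will be Weil reciprocity. Given $f\in K^\times$ for a function field $K$ in one variable over $k$ and $x_i\in(J_1\times\cdots\times J_d\times A)(K)$, one must show
\[
\sum_v \ord_v(f)\,\Psi_r'(\{s_v^1(x_1),\ldots,s_v^r(x_r)\}_{k_v/k})\equiv 0\pmod{F^{r+1}\ch_0(X)}.
\]
The strategy I would use is to interpret each $x_i$ as a morphism from the smooth projective model $\overline C$ of $K$ into $J_1\times\cdots\times J_d\times A$; unpacking Definition \ref{def:Psi-map} should then present the left-hand side as the pushforward along an induced morphism $\overline C\to X$ of the divisor of $f$, which vanishes already in $\ch_0(\overline C)$ and therefore in $\ch_0(X)$. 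This would give the Weil relation on the nose, not merely modulo $F^{r+1}\ch_0(X)$. Together with (i)--(iii), the map $\Psi_r$ then descends to $S_r(k; \underline{J_1}\times\cdots\times\underline{J_d}\times A)$, completing the construction.
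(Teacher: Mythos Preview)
Your approach is a direct verification that $\Psi_r$ kills each defining relation of $S_r$. The paper takes a completely different and much shorter route: having already established Corollary~\ref{cor: phi circ psi} (that $\Phi_r\circ\Psi_r'=r!\cdot q$, where $q$ is the quotient to $S_r$), it simply observes that if $x$ lies in the kernel $H$ of $q$, then $\Phi_r(\Psi_r(x))=r!\,q(x)=0$; since $\Phi_r$ is injective on $F^r/F^{r+1}$ by the definition of the filtration, this forces $\Psi_r(x)=0$. That is the entire proof. The trick buys a lot: it makes it unnecessary to check the full projection formula in the abelian slot or Weil reciprocity at all, since \emph{every} relation defining $S_r$ is handled uniformly by the single identity $\Phi_r\circ\Psi_r'=r!$.

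Your direct route can in principle be made to work (the paper carries out essentially this argument later, in Lemma~5.4, for the parallel filtration $F^r_\Psi$ over an algebraically closed field), but your sketch of step~(iv) has a genuine gap. You claim that unpacking Definition~\ref{def:Psi-map} presents $\sum_v\ord_v(f)\,\Psi_r'(\{s_v(x_1),\ldots,s_v(x_r)\}_{k_v/k})$ as the pushforward of $\dv(f)$ along a single induced morphism $\overline C\to X$. This is not correct: the summands of $\Psi_r'$ are intersection products of pulled-back divisor classes on the $C_i$ with Pontryagin products on $A$, not evaluations of a morphism at points, so there is no single map $\overline C\to X$ doing the job. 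What actually works is to use that specialization commutes with intersection and Pontryagin products (Fulton, Proposition~20.3 and its Corollary) to write the sum as $\sum_v \ord_v(f)\,(\pi_{k_v/k})_*\,s_v^X(Q)$ for a fixed cycle $Q\in\ch_0(X_K)$, which then vanishes by \cite[(2.2.1)]{raskind2000milnor}. Your sketch of the projection formula in the abelian slot is also only heuristic; it can be completed, but again the paper's indirect argument sidesteps the issue entirely.
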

\addtocounter{lem}{-1}
\endgroup
\begin{proof}
    Let $H$ be the subgroup of $\bigoplus_{k'/k}\prod_{i=1}^r J_1(k') \oplus \cdots \oplus J_d(k') \oplus A(k')$ such that the quotient by $H$ gives $S_r(k; \underline{J_1}\times \cdots \times \underline{J_d}\times A)$. Let $x \in H$. Then by Corollary \ref{cor: phi circ psi} $\Phi_r(\Psi_r(x)) = r!x$, which is in $H$. Therefore, $\Phi_r(\Psi_r(x)) = 0$, so $\Psi_r(x) = 0$, because $\Phi_r$ is injective on $F^r\ch_0(X)/F^{r+1}\ch_0(X)$.
\end{proof}

Note that Corollary \ref{cor: phi circ psi} and Lemma $\ref{lem: key-lem-Psi}$ imply Lemma \ref{lem: key-lem-comp}. 

\raggedbottom
\pagebreak

\section{Finiteness of the Filtration}\label{section: finiteness of filt}
In this section we show that $F^{r}(X)\otimes \mathbb{Q} = 0$ for all $r > \text{dim}(X) = d + g$ where $g$ is the dimension of $A$. First, we will need some preparation regarding specialization maps.

\subsection{Specialization Maps}
Let $X$ be a smooth projective variety over $k$, and let $K$ be a function field in one variable over $k$. The for each place $v$ of $K$ there exist specialization maps 
\[
    s_v: \ch_0(X_K) \to \ch_0(X_{k(v)})
\]
where $k(v)$ is the residue field of $v$ defined as follows. Let $\mathcal{O}_v$ be the ring of integers of $v$. Then $X_K \to X_{\mathcal{O}_v}$ is an open embedding and $X_{k(v)} \to X_{\mathcal{O}_v}$ is a closed embedding. Then for $x \in \ch_0(X_K)$, $s_v(x)$ is $\overline{x} \cap X_{k(v)}$ where $\overline{x}$ is the closure of $x$ in $X_{\mathcal{O}_v}$. 

In the case that $X$ is a curve with a rational point and $J$ is its Jacobian, then under the natural isomorphism $\Pic^0(X_E) \cong J(E)$ for an extension $E/k$, the specialization map on Chow groups agrees with the specialization map on points of the abelian variety defined in Section \ref{section: Somekawa-Kgps}. Moreover, if $x \in X(K)$, then $s_v(x) \in X(k(v))$, and the cycle class it defines in $\ch_0(X_{k(v)})$ is $s_v([x]_K)$ where $[x]_K$ is viewed as an element of $\ch_0(X_K)$.  

\begin{defn}
    Recall the map 
    \[
        \Psi'_r: \bigoplus_{k'/k}\prod_{i=1}^r J_1(k')\oplus \cdots \oplus J_d(k') \oplus A(k') \to \ch_0(X)
    \]
    from Definiton \ref{def:Psi-map}. Then define $F^r_{\Psi}\ch_0(X) \subset \ch_0(X)$ to be the subgroup generated by the images of $\Psi_j'$ for all $j\ge r$. That is
    \[
        F^r_{\Psi}\ch_0(X) := \left\langle\bigcup_{j \ge r} \mathrm{im}(\Psi_r') \right\rangle
    \]
\end{defn}

Note that $F^r_{\Psi}\ch_0(X)$ defines a descending filtration on $\ch_0(X)$. By Proposition \ref{prop:Psi-in-Fr},  $F^r_{\Psi}\ch_0(X) \subset F^r\ch_0(X)$.

The groups $F^r_{\Psi}\ch_0(X)$ are analogues of Pontryagin powers of the degree $0$ subgroup of $\ch_0(A)$ for an abelian variety $A$. In the case when $d = 0$, these two groups coincide. Importantly for us, if $A$ is an abelian variety of dimension $g$, and $I$ is the degree $0$ subgroup of $\ch_0(A)$, then 
\[
    I^{\odot g+1}\otimes \mathbb{Q} = 0
\]
in $\ch_0(A)\otimes \mathbb{Q}$. This fact was proven by Bloch \cite{bloch1976some} over algebraically closed fields. Beauville proved it over $\mathbb{C}$ \cite{beauville1986anneau} using different methods, and Deninger and Murre \cite{murre1991motivic} extended these techniques to arbitrary base fields. 

\begin{lem}\label{lemma: G^r vanishes}
    For every $r > \text{dim}(X)$, $F^r_\Psi(X)\otimes \mathbb{Q} = 0$. 
\end{lem}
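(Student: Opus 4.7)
The plan is a direct dimension count that isolates the abelian-variety factor in each summand of $\Psi_r'$ and invokes the Bloch--Beauville--Deninger--Murre vanishing theorem just recalled. Since $F^r_\Psi \ch_0(X)$ is by definition generated by the images of $\Psi_j'$ for $j \geq r$, it suffices to show that $\mathrm{im}(\Psi_j') \otimes \Q = 0$ for every $j > d + g$.

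Fix such a $j$ and inspect the defining formula of $\Psi_j'$. Every element in the image is a $\Z$-linear combination of cycles of the form $(\pi_{k'/k})_*(W)$, indexed by a subset $I = \{i_1 < \cdots < i_s\} \subseteq \{1,\ldots,d\}$ of size $s \leq d$ and an injection $\varphi_I : I \hookrightarrow \{1,\ldots,j\}$; each such $W$ contains the factor
\[
    pr_{A,k'}^*\!\left(\bigodot_{\ell \in \{1,\ldots,j\} \setminus \varphi_I(I)} \bigl([a_\ell]_{k'} - [0]_{k'}\bigr)\right),
\]
which is the flat pullback of a Pontryagin product of $j - s$ elements of the degree-zero ideal in $\ch_0(A_{k'})$. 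Since $s \leq d$ and $j > d + g$, we have $j - s \geq j - d > g$, so this Pontryagin product lies in the $(g+1)$-st Pontryagin power of the degree-zero ideal. By Bloch--Beauville--Deninger--Murre, applied to $A_{k'}$ (still an abelian variety of dimension $g$ over the perfect field $k'$), this power vanishes in $\ch_0(A_{k'}) \otimes \Q$. Because flat pullback, intersection product, and proper pushforward are all $\Q$-linear and respect rational equivalence, each $(\pi_{k'/k})_*(W)$ is zero in $\ch_0(X) \otimes \Q$, and summing over the indices in the definition of $\Psi_j'$ yields the required vanishing of $\mathrm{im}(\Psi_j') \otimes \Q$, hence of $F^r_\Psi\ch_0(X) \otimes \Q$.

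There is no real obstacle here; the argument is essentially bookkeeping once one recognizes the Pontryagin-power structure hiding inside the definition of $\Psi_r'$. The only substantive external input is the Bloch--Beauville--Deninger--Murre theorem, and the only minor verification is that it persists after extension to the perfect field $k'$, which is immediate since $A_{k'}$ has the same dimension $g$ and the Deninger--Murre refinement works over arbitrary base fields.
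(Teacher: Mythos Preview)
Your proposal is correct and follows essentially the same approach as the paper: isolate the Pontryagin-product factor in each summand of $\Psi_j'$, observe that it has $j-s \ge j-d > g$ factors from the degree-zero ideal of $\ch_0(A_{k'})$, and invoke the Bloch--Beauville--Deninger--Murre vanishing. The paper's proof is terser but makes exactly the same dimension count $r - s \ge r - d > \dim(A)$.
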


\begin{proof}
    By the definition of $F^r_\Psi(X)$ and $\Psi_r'$, it suffices to show that for $r > dim(X)$ and fixed $0\le s \le d$, $1\le j_1 < \ldots < j_s \le r$, $a_1,\ldots, a_r \in A(k)$
    \[
        ([a_1]_{k'}-[0]_{k'})\odot \cdots \odot \widehat{([a_{j_1}]_{k'})}\odot \cdots  \odot \widehat{([a_{j_s}]_{k'})} \odot \cdots \odot ([a_r]_{k'}-[0]_{k'}) = 0.
    \]
    This is true by the above since $r- s \ge r-d > \text{dim}(A)$. 
\end{proof}
Therefore, it remains to verify the following proposition

\begin{prop}\label{prop: Fr and FrPsi agree}
    Over an arbitrary field, $F^r\ch_0(X)\otimes \mathbb{Q} = F^r_\Psi\ch_0(X)\otimes \mathbb{Q}$.  
\end{prop}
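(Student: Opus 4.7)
The plan is to combine Theorem~\ref{thm: main}, an iterative splitting, and a finiteness step. First, by Theorem~\ref{thm: main}, $\Phi_r$ is an isomorphism of rational vector spaces $F^r\ch_0(X)/F^{r+1}\ch_0(X)\otimes\mathbb{Q} \xrightarrow{\cong} S_r(k;\underline{J_1}\times\cdots\times\underline{J_d}\times A)\otimes\mathbb{Q}$, while Corollary~\ref{cor: phi circ psi} gives $\Phi_r\circ\Psi'_r = r!\cdot q$. Together these show that on the rationalized graded piece $\Psi_r\otimes\mathbb{Q}$ is the inverse of $\Phi_r$ up to the scalar $r!$. Therefore, for any $\alpha\in F^r\ch_0(X)\otimes\mathbb{Q}$, setting $\gamma_r := \tfrac{1}{r!}\Psi'_r(\widetilde{\Phi_r(\alpha)})$, where $\widetilde{\Phi_r(\alpha)}$ is any lift of $\Phi_r(\alpha)$, produces an element of $F^r_\Psi\ch_0(X)\otimes\mathbb{Q}$ (by definition of $F^r_\Psi$) with $\alpha - \gamma_r\in F^{r+1}\ch_0(X)\otimes\mathbb{Q}$.

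Applying this step inductively yields a decomposition $\alpha = \gamma_r + \gamma_{r+1} + \cdots + \gamma_{N-1} + \alpha_N$ with each $\gamma_j \in F^j_\Psi \subset F^r_\Psi$ (using that the filtration $F^{\bullet}_\Psi$ is descending) and $\alpha_N \in F^N\ch_0(X)\otimes\mathbb{Q}$. For $N > \dim(X)$, Lemma~\ref{lemma: G^r vanishes} gives $F^N_\Psi\otimes\mathbb{Q} = 0$, so $\Psi'_N\otimes\mathbb{Q} = 0$, and the identity $\Phi_N\circ\Psi'_N = N!\cdot q$ then forces $S_N(k;\underline{J_1}\times\cdots\times\underline{J_d}\times A)\otimes\mathbb{Q} = 0$. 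By Theorem~\ref{thm: main} again, $F^N/F^{N+1}\otimes\mathbb{Q}=0$, and so the filtration stabilizes: $F^{N}\otimes\mathbb{Q} = F^{N+1}\otimes\mathbb{Q} = \cdots$ for $N > \dim(X)$.

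The main obstacle is then to show that this stable value $F^{\infty}\ch_0(X)\otimes\mathbb{Q} := \bigcap_{N}F^{N}\ch_0(X)\otimes\mathbb{Q}$ is zero, since otherwise the error term $\alpha_N$ in the decomposition above need not vanish. My plan is to exploit the factorization $\Phi_r = q_r\circ\tilde{\Phi}_r\circ(\varphi_X)_*$ used in the proof of well-definedness of $\Phi_r$, and push forward $\alpha\in F^\infty$ along the Albanese embedding to $\ch_0(\Alb_X)\otimes\mathbb{Q}$. There one can invoke the Bloch--Beauville--Deninger--Murre decomposition $\ch_0(\Alb_X)\otimes\mathbb{Q} = \bigoplus_{s=0}^{\dim\Alb_X}\ch_0(\Alb_X)_{(s)}$, together with Gazaki's theorem (\cite{gazaki2015filtration}) identifying her filtration on $\ch_0(\Alb_X)\otimes\mathbb{Q}$ with $\bigoplus_{s\ge r}\ch_0(\Alb_X)_{(s)}$, which forces $\tilde{F}^{\dim\Alb_X+1}\ch_0(\Alb_X)\otimes\mathbb{Q}=0$. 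The delicate part is reconciling our underlined Somekawa groups (which kill the repeated--Jacobian summands) with the full Somekawa groups appearing in Gazaki's work, and in particular tracking how the decomposition on $\Alb_X$ restricts to the image of $(\varphi_X)_*$, in order to conclude that any such $\alpha$ must vanish rationally.
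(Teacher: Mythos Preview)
Your Steps 1--5 are correct and reduce Proposition~\ref{prop: Fr and FrPsi agree} to the vanishing $F^{N}\ch_0(X)\otimes\Q=0$ for $N>\dim X$. But this vanishing is exactly Theorem~\ref{thm: vanishing}, which in the paper is \emph{deduced from} Proposition~\ref{prop: Fr and FrPsi agree} together with Lemma~\ref{lemma: G^r vanishes}. So you have inverted the logical order, and the burden now falls entirely on your Step~6.

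That step has a real gap. Your plan is to push $\alpha\in F^{\infty}$ forward along $\varphi_X$ and invoke Gazaki's vanishing $\tilde F^{\dim\Alb_X+1}\ch_0(\Alb_X)\otimes\Q=0$. But $\Phi_r=q_r\circ\tilde\Phi_r\circ(\varphi_X)_*$, and the kernel of $q_r$ (the repeated--Jacobian summands) is nonzero in general: for a closed point $z$ the symbol $\{\varphi_X(z),\dots,\varphi_X(z)\}$ contributes terms like $\{\iota_{p_i}(x_i),\iota_{p_i}(x_i),\dots\}$ with the \emph{same} point of $C_i$ in both slots, and there is no reason these vanish outside special situations (e.g.\ the hyperelliptic case, up to $2$-torsion). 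So $\alpha\in\bigcap_r\ker\Phi_r$ does not force $(\varphi_X)_*\alpha\in\bigcap_r\ker\tilde\Phi_r$. Even granting that, you would still need $(\varphi_X)_*$ to be injective on $\ch_0\otimes\Q$, which a closed immersion does not give for free.

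The paper sidesteps all of this by a different mechanism. It first works over an algebraically closed field and proves the \emph{other} composition identity, $\overline{\Psi}_r\circ\overline{\Phi}_r=r!$ on $F^r_\Psi/F^{r+1}_\Psi$. This requires knowing that $\overline{\Psi}_r$ actually descends to a homomorphism out of $S_r(k;\underline{J_1}\times\cdots\times\underline{J_d}\times A)$, which is checked directly (multilinearity from the definition, Weil reciprocity via specialization of cycles; the projection formula is vacuous over $\overline{k}$). With that in hand, an induction on $r$ starting from the elementary base case $F^1=F^1_\Psi$ gives $F^r\otimes\Q=F^r_\Psi\otimes\Q$ over $\overline{k}$, with no appeal to any vanishing of $F^\infty$. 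The passage to arbitrary $k$ is then a short restriction/trace argument using that $\pi_{L/k}^*$ respects $F^\bullet$ and $(\pi_{L/k})_*$ respects $F^\bullet_\Psi$. The ingredient you are missing is precisely this second composition $\overline{\Psi}_r\circ\overline{\Phi}_r=r!$; your argument only uses $\Phi_r\circ\Psi'_r=r!$, which by itself cannot terminate the recursion.
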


Before, we prove this proposition, we need some preliminaries. First note that by Proposition \ref{prop:Psi-in-Fr}, $\Phi_r$ induces a well-defined map
\[
    \overline{\Phi}_r: F^r_{\Psi}\ch_0(X)/F_{\Psi}^{r+1}\ch_0(X) \to S_r(k; \underline{J_1}\times \cdots \times \underline{J_d}\times A).
\]
Now denote by $\overline{\Psi}_r$ the composition
\[
    \bigoplus_{k'/k}\prod_{i=1}^rJ_1(k')\oplus \ldots J_d(k') \oplus A(k') \xrightarrow{\Psi_r'} F^{r}_{\Psi}\ch_0(X) \to F^r_{\Psi}\ch_0(X)/F^{r+1}_{\Psi}\ch_0(X).
\]

\begin{lem}
    Let $k = \overline{k}$ be algebraically closed. Then $\overline{\Psi}_r$ induces a well-define group homomorphism 
    \[
        S_r(k; \underline{J_1}\times \cdots \times \underline{J_d}\times A) \to F^r_{\Psi}\ch_0(X)/F^{r+1}_{\Psi}\ch_0(X).
    \]
    Moreover, the composition
    \[
        F^r_{\Psi}\ch_0(X)/F^{r+1}_{\Psi}\ch_0(X)\xrightarrow{\overline{\Phi}_r}S_r(k; \underline{J_1}\times \cdots \times \underline{J_d}\times A) \xrightarrow{\overline{\Psi}_r} F^r_{\Psi}\ch_0(X)/F^{r+1}_{\Psi}\ch_0(X)
    \]
    is multiplication by $r!$. 
\end{lem}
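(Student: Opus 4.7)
The strategy is to show that each defining relation of $S_r(k; \underline{J_1}\times\cdots\times\underline{J_d}\times A)$ lifts through $\Psi_r'$ to an element of $F^{r+1}_\Psi\ch_0(X)$. The hypothesis $k = \overline{k}$ simplifies matters because every finite extension $k'/k$ is trivial, reducing the source of $\Psi_r'$ to $\prod_{j=1}^r(J_1\oplus\cdots\oplus J_d\oplus A)(k)$ and making the projection-formula relations vacuous. The relations that remain to check are multilinearity in each of the $r$ tuple slots, permutation symmetry of the $r$ slots, and the Weil relations coming from function fields $K=k(C)$ in one variable.

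Symmetry is immediate from Definition~\ref{def:Psi-map}: the defining sum over pairs $(I,\varphi_I)$ is already invariant under permutations of $\{1,\ldots,r\}$. For multilinearity, Proposition~\ref{prop: partial multilinearity} reduces the question (via the splitting $(z,a)=(z,0)+(0,a)$) to verifying that, in the abelian-variety slot alone, the difference
\[
    \Psi_r'(\{(0,a_1+a_2),\ldots\}) - \Psi_r'(\{(0,a_1),\ldots\}) - \Psi_r'(\{(0,a_2),\ldots\})
\]
lies in $F^{r+1}_\Psi\ch_0(X)$. Expanding $\Psi_r'$ using the identity $[a_1+a_2]-[a_1]-[a_2]+[0]=([a_1]-[0])\odot([a_2]-[0])$, this difference is visibly the image under $\Psi_{r+1}'$ of the symbol obtained by inserting an additional tuple $(0,\ldots,0,a_2)$, which lies in $F^{r+1}_\Psi\ch_0(X)$ by definition of that filtration.

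The main obstacle is the Weil relations. Given a smooth projective curve $C$ over $k$, a function $f\in k(C)^\times$, and points $x^j\in\Alb_X(k(C))$, each $x^j$ extends uniquely to a morphism $\varphi^j\colon C\to\Alb_X$ since $\Alb_X$ is proper; the specialization $s_v(x^j)$ is then $\varphi^j(v)$. The plan is to build a relative version of $\Psi_r'$ by running the construction in Definition~\ref{def:Psi-map} in the family over $C$: the relative Poincar\'e classes pulled back along the $\varphi^j_i\colon C\to J_i$ give cycles in $X\times C$, and the translates $\varphi^j_A\colon C\to A$ provide relative versions of the Pontryagin factors $[a_j]-[0]$. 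Stringing these together produces a cycle $Z\in\ch(X\times C)$ whose fibre over each closed point $v\in C$ coincides with $\Psi_r'(\{s_v(x^1),\ldots,s_v(x^r)\}_{k/k})$. Since $\dv(f)$ is a principal divisor on $C$, the cycle $Z\cdot p_C^*\dv(f)$ is rationally equivalent to $0$ in $X\times C$, where $p_C$ and $p_X$ denote the projections. Pushing forward by $p_X$ yields $\sum_v\ord_v(f)\,\Psi_r'(\{s_v(x^1),\ldots,s_v(x^r)\}_{k/k})=0$ already in $\ch_0(X)$, and hence a fortiori in $F^{r+1}_\Psi\ch_0(X)$. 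The delicate step, and the main technical difficulty of the proof, is verifying the fibrewise identity: that restriction of the family cycle $Z$ to $X\times\{v\}$ reproduces the pointwise $\Psi_r'$ of the specializations. This requires checking that flat pullback, Pontryagin product, and specialization all commute suitably in the relative setting over $C$.

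Once $\overline{\Psi}_r$ is known to be well-defined, the composition statement follows immediately from Corollary~\ref{cor: phi circ psi}. Every class in $F^r_\Psi\ch_0(X)/F^{r+1}_\Psi\ch_0(X)$ is represented, modulo $F^{r+1}_\Psi\ch_0(X)$, by $\Psi_r'(\alpha)$ for some $\alpha$ in the source of $\Psi_r'$. Then $\overline{\Phi}_r([\Psi_r'(\alpha)])=\Phi_r(\Psi_r'(\alpha))=r!\,q(\alpha)$ in $S_r$ by Corollary~\ref{cor: phi circ psi}, so $\overline{\Psi}_r(\overline{\Phi}_r([\Psi_r'(\alpha)]))=r!\,[\Psi_r'(\alpha)]$, as required.
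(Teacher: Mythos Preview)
Your proposal is correct and follows essentially the same route as the paper: both reduce multilinearity in the abelian-variety slot to the identity $\Psi_{r+1}'(\{(0,a_1),(0,\tilde a_1),\ldots\})=\Psi_r'(\{(0,a_1+\tilde a_1),\ldots\})-\Psi_r'(\{(0,a_1),\ldots\})-\Psi_r'(\{(0,\tilde a_1),\ldots\})$, both dispose of the projection formula by $k=\overline k$, and both deduce the composition statement from Corollary~\ref{cor: phi circ psi} in the same way. For the Weil relations, the paper first reduces via Propositions~\ref{prop: partial multilinearity} and~\ref{prop:partial proj form}, then writes down an explicit class $Q\in\ch_0(X_K)$, invokes Fulton's compatibility of specialization with intersection and Pontryagin products to identify $\Psi_r'$ of the specialized symbol with $s_v^X(Q)$, and finally cites the vanishing $\sum_v\ord_v(f)(\pi_{k(v)/k})_*s_v^X(Q)=0$ from Raskind--Spiess; your family cycle $Z$ on $X\times C$ is exactly the closure of $Q$, and your fibrewise identity is the Fulton compatibility. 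One point you omit: $S_r(k;\underline{J_1}\times\cdots\times\underline{J_d}\times A)$ has an additional relation beyond those of $S_r(k;J_1\times\cdots\times J_d\times A)$, namely that symbols with two pure-Jacobian entries from the same $J_t$ vanish. The paper checks this explicitly; in your setup it follows immediately from the definition of $\Psi_r'$ (no injection $\varphi_I$ can assign both slots to the single curve $C_t$, so every term in the defining sum vanishes), but you should state it.
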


\begin{proof}
    Consider the map
    \[
        \Psi_r' : \bigoplus_{k'/k}\prod_{i=1}^rJ_1(k')\oplus \cdots \oplus J_d(k') \oplus A(k') \to F^r_{\Psi}\ch_0(X).
    \]
    Our first step is to show that this induces a group homomorphism 
    \[
        \overline{\Psi}_r : S_r(k; \underline{J_1}\times \ldots \times \underline{J_d} \times A) \to F^r_{\Psi}\ch_0(X)/F^{r+1}_\Psi\ch_0(X).
    \]
    Let us first show that the above map is multilinear. By Proposition \ref{prop: partial multilinearity}, it suffices to show that 
    \begin{align*}
        &\overline{\Psi}_r\left(\{(0, \ldots, 0, a_1+\tilde{a}_1), (z_1^2, \ldots, z_d^2, a_2),\ldots, (z_1^r,\ldots, z_d^r, a_r)\}_{k'/k}\right) = \\
        & \overline{\Psi}_r\left(\{(0, \ldots, 0, a_1),(z_1^2, \ldots, z_d^2, a_2),\ldots, (z_1^r,\ldots, z_d^r, a_r)\}_{k'/k}\right) \\
        & +\overline{\Psi}_r\left(\{(0, \ldots, 0, \tilde{a}_1)(z_1^2, \ldots, z_d^2, a_2),\ldots, (z_1^r,\ldots, z_d^r, a_r)\}_{k'/k}\right).
    \end{align*}
    For this it suffices to observe that 
    \begin{align*}
        &\overline{\Psi}_{r+1}\left(\{(0, \ldots, 0, a_1), (0, \ldots, 0, \tilde{a}_1), (z_1^2, \ldots, z_d^2, a_2) \ldots, \ldots, (z_1^r,\ldots, z_d^r, a_r)\}_{k'/k}\right) = \\
        &\overline{\Psi}_r\left(\{(0, \ldots, 0, a_1+\tilde{a}_1), (z_1^2, \ldots, z_d^2, a_2) \ldots, \ldots, (z_1^r,\ldots, z_d^r, a_r)\}_{k'/k}\right) \\
        & -\overline{\Psi}_r\left(\{(0, \ldots, 0, a_1),(z_1^2, \ldots, z_d^2, a_2),\ldots, (z_1^r,\ldots, z_d^r, a_r)\}_{k'/k}\right) \\
        & -\overline{\Psi}_r\left(\{(0, \ldots, 0, \tilde{a}_1)(z_1^2, \ldots, z_d^2, a_2),\ldots, (z_1^r,\ldots, z_d^r, a_r)\}_{k'/k}\right).
    \end{align*}
    Thus, $\overline{\Psi}_r$ induces a group homomorphism
    \[
        \overline{\Psi}_r : \bigoplus_{k'/k}\bigotimes_{i=1}^rJ_1(k')\oplus \ldots J_d(k') \oplus A(k') \to F^r_{\Psi}\ch_0(X)/F^{r+1}_\Psi\ch_0(X).
    \]
    We want to show that the map $\overline{\Psi}_r$ induces a well-defined group homomorphism from $S_r(k; \underline{J_1}\times\ldots\times\underline{J_d}\times A)$. For the projection formula, there is nothing to show as $k$ is algebraically closed. Next, we need to show that $\overline{\Psi}_r$ factors through the quotient by Weil relations. Let $K$ be a function field in one variable, and for each place $v$ of $K$ let 
    \[
        s_v^i: J_i(K)\to J_i(k_v)
    \]
    and 
    \[
        s_v^A: A(K) \to A(k)
    \]
    be the specialization maps. By Propositions \ref{prop:partial proj form} and \ref{prop: partial multilinearity}, it suffices to show that for $f \in K^\times$, $z_{i_t} \in J_{i_t}(K)$, and $a_j \in A(k)$
    \begin{align*}
        &\Psi_r'\sum_{v \text{ place of } K/k} \ord_v f\{(0, \ldots, 0, s_v^{i_1}(z_{i_1}), 0 \ldots, 0), \ldots, \left(0, \ldots, 0,s_v^{i_s}(z_{i_s}), 0 \ldots, 0\right), \\
        & (0,\ldots,0, s_v^1(a_1)), \ldots, (0,\ldots, 0, s_v^A(a_{r-s}))\}_{k(v)/k} =0.
    \end{align*}
    We have, 
    \begin{align*}
        &\Psi_r'\sum_{v \in K/k} \ord_v f\{(0, \ldots, 0, s_v^{i_1}(z_{i_1}), 0 \ldots, 0), \ldots, \left(0, \ldots, 0,s_v^{i_s}(z_{i_s}), 0 \ldots, 0\right), \\
        & (0,\ldots,0, s_v^1(a_1)), \ldots, (0,\ldots, 0, s_v^A(a_{r-s}))\}_{k(v)/k} \\
        &= \sum_{v \text{ place of } K/k} \ord_v f(\pi_{k(v)/k})_*(W)
    \end{align*}
    where 
    \begin{align*}
        &W = \left(pr_{1,k(v)}^*([p_1]_{k(v)})\cdots pr_{i_1, k(v)}^*(s_v^{i_1}z_{i_1})\cdots pr_{i_s,k(v)}^*(s_v^{i_s}z_{i_s})\cdots pr_{d,k(v)}^*([p_d]_{k(v)})\right) \\
        &\cdot pr_{A,k(v)}^*\left(\bigodot_{j \in \{1,\ldots r-s\}}([s_v^A(a_j)]_{k(v)} - [0]_{k(v)})\right).
    \end{align*}
    But by the remarks above,
    \[
        \bigodot_{j \in \{1,\ldots r-s\}}([s_v^A(a_j)]_{k(v)} - [0]_{k(v)}) = s_v^A\left(\bigodot_{j \in \{1,\ldots r-s\}}([a_j]_{K}-[0]_{K})\right)
    \]
    where the specialization map on the right is now taken to be the specialization map on Chow groups. By Proposition 20.3 and Corollary 20.3 of \cite{fulton2013intersection}, the above becomes 
    \[
        \sum_{v\text{ place of } K/k} \ord_v f (\pi_{k(v)/k})_*(s_v^X(Q))
    \]
    where $s_v^X$ is the specialization map on $X$ and 
    \begin{align*}
        &Q = \left(pr_{1,K}^*([p_1]_{K})\cdots pr_{i_1, K}^*(z_{i_1})\cdots pr_{i_s,K}^*(z_{i_s})\cdots pr_{d,K}^*([p_d]_{K})\right) \\
        &\cdot pr_{A,K}^*\left(\bigodot_{j \in \{1,\ldots r-s\}}([a_j]_{K} - [0]_{K})\right).
    \end{align*}
    And this is $0$ by (2.2.1) of \cite{raskind2000milnor}.
    
    The fact $\overline{\Psi}_r$ is invariant under the action of the symmetric group is clear from the definition. Thus, we have a map
    \[
        \overline{\Psi}_r : S_r(k; J_1\times \cdots \times J_d\times A) \to F^r_{\Psi}\ch_0(X)/F^{r+1}_\Psi\ch_0(X).
    \]
    It remains to show that $\overline{\Psi}_r$ factors through $S_r(k; \underline{J_1}\times \cdots \times \underline{J_d}\times A)$. Since $\overline{\Psi}_r$ is multilinear and invariant under the action of the symmetric group, this amounts to checking that for each $1\le t\le d$
    \[
        \overline{\Psi}_r\left(\{(0,\ldots, 0, z_t^1,0,\ldots, 0),(0,\ldots, 0, z_t^2,0,\ldots, 0),(z_1^3, \ldots, z_d^3,a_3) ,\ldots, (z_1^r, \ldots, z_d^r, a_r)\}\right) =0. 
    \]
    By the definition $\overline{\Psi}_r$, the intersection product of each term in the sum will be $0$, so the above is $0$. So we have the desired map.
    
    Now we verify the second statement of the lemma. By Corollary \ref{cor: phi circ psi}, $\Psi_r'\circ\Phi_r$ is multiplication by $r!$. Let $\alpha \in F^r_\Psi\ch_0(X)/F^{r+1}_\Psi\ch_0(X)$. Then a representative of this coset can be taken to $\Psi_r'(\beta)$ for some $\beta \in S_r(k;\underline{J_1}\times\ldots\times\underline{J_d}\times A)$. Therefore, $\overline{\Psi}_r\circ\overline{\Phi}_r(\alpha) = \overline{\Psi}_r\circ\overline{\Phi}_r(\Psi_r'(\beta)) = \overline{\Psi}_r(r!\beta) = r!\alpha$
\end{proof}

\begin{lem}
    Let $k = \overline{k}$ be an algebraically closed field. Then $F^r\ch_0(X)\otimes \Q = F^r_{\Psi}\ch_0(X)\otimes \Q$. 
\end{lem}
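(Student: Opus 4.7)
The plan is to argue by upward induction on $r$. The inclusion $F^r_\Psi\ch_0(X)\otimes\Q \subseteq F^r\ch_0(X)\otimes\Q$ is immediate from Proposition \ref{prop:Psi-in-Fr}, so only the reverse inclusion requires work. The two key tools are Corollary \ref{cor: phi circ psi} (which gives $\Phi_r\circ\Psi_r' = r!\cdot q$) and the preceding lemma (which shows that $\overline{\Psi}_r$ factors through $S_r$ into $F^r_\Psi/F^{r+1}_\Psi$ with $\overline{\Psi}_r\circ\overline{\Phi}_r = r!\cdot\mathrm{id}$).

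For the base case $r = 0$: since $k = \bar k$ every closed point of $X$ is $k$-rational, say $x = (y_1,\ldots,y_d,a)$. I would expand
\[
[x] \;=\; \prod_{i=1}^{d} pr_i^*([y_i])\cdot pr_A^*([a]) \;=\; \prod_{i=1}^{d}\bigl(pr_i^*([p_i]) + pr_i^*([y_i]-[p_i])\bigr)\cdot \bigl(pr_A^*([0]) + pr_A^*([a]-[0])\bigr)
\]
into $2^{d+1}$ terms indexed by pairs $(S,\epsilon)$ with $S\subseteq\{1,\ldots,d\}$ and $\epsilon\in\{0,1\}$. A direct inspection of the definition of $\Psi_r'$ shows that each such term equals $\tfrac{1}{|S|!}$ times $\Psi_{|S|+\epsilon}'$ applied to an explicit symbol (supported on the Jacobian coordinates indexed by $S$, and on the abelian variety coordinate $a$ when $\epsilon=1$); the factor $|S|!$ accounts for the bijections $\varphi_I \colon I \hookrightarrow \{1,\ldots,|S|+\epsilon\}$ with the appropriate image. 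Hence $[x]\in F^0_\Psi\ch_0(X)\otimes\Q$, which yields the base case.

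For the inductive step, assume $F^{r-1}\ch_0(X)\otimes\Q = F^{r-1}_\Psi\ch_0(X)\otimes\Q$, and take $\alpha\in F^r\ch_0(X)$. Since $F^r\subseteq F^{r-1}$, the hypothesis lets me write $\alpha = \Psi_{r-1}'(\gamma) + \eta$ rationally, where $\gamma$ lies in the ($\Q$-extended) free group $\bigoplus_{k'/k}\prod_{i=1}^{r-1} J_1(k')\oplus\cdots\oplus A(k')$ and $\eta\in F^r_\Psi\ch_0(X)\otimes\Q$. Applying $\Phi_{r-1}$ and using $\Phi_{r-1}(\alpha) = 0$ (since $\alpha\in F^r\subseteq \ker\Phi_{r-1}$) together with $\Phi_{r-1}(\eta)=0$ (since $\eta\in F^r_\Psi\subseteq F^r$), Corollary \ref{cor: phi circ psi} yields $(r-1)!\cdot q(\gamma)=0$ in $S_{r-1}$, hence $q(\gamma)=0$ in $S_{r-1}\otimes\Q$. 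The preceding lemma then guarantees $\Psi_{r-1}'(\gamma) \in F^r_\Psi\ch_0(X)\otimes\Q$, because its class in $(F^{r-1}_\Psi/F^r_\Psi)\otimes\Q$ equals $\overline{\Psi}_{r-1}(q(\gamma))=0$. Therefore $\alpha = \Psi_{r-1}'(\gamma) + \eta \in F^r_\Psi\ch_0(X)\otimes\Q$.

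The main obstacle is the base case: one must explicitly realize every closed point class as a $\Q$-linear combination of $\Psi_r'$-images, which is a bookkeeping exercise with the combinatorial sum defining $\Psi_r'$. One cannot short-circuit this via a downward induction, because the only natural top-level base case there is $F^r\otimes\Q=0$ for $r>\dim X$, which is precisely the vanishing theorem \ref{thm: vanishing} that this lemma is being used to deduce.
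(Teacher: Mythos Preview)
Your argument is correct and follows the same inductive strategy as the paper: both use the preceding lemma ($\overline{\Psi}_r$ factors through $S_r$ with $\overline{\Psi}_r\circ\overline{\Phi}_r=r!\cdot\mathrm{id}$) to pass from level $r-1$ to level $r$. The paper anchors the induction at $r=1$ (showing $F^1=F^1_\Psi$ by a nested induction on the number of coordinates $x_i\neq p_i$) and phrases the inductive step as injectivity of the natural map $q\colon F^r_\Psi/F^{r+1}_\Psi\to F^r/F^{r+1}$ after $\otimes\Q$; your $r=0$ binomial expansion and element-level inductive step are equivalent reformulations. One minor remark: for the specific ``reduced'' symbols with a single Jacobian entry in each slot (as in Remark~\ref{rem: Reduction of proofs}) there is exactly one nonvanishing injection $\varphi_I$, so the $\tfrac{1}{|S|!}$ factor is unnecessary and the base case in fact holds integrally---your factor is correct only if you use the symmetric symbol with the same tuple repeated $|S|$ times.
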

\begin{proof}
     Now we show that $F^r\ch_0(X)\otimes \Q = F^r_\Psi\ch_0(X)\otimes \Q$ by induction on r. In the case that $r = 1$, $F^1\ch_0(X)$ is by definition the set of degree $0$ elements of $\ch_0(X)$. This group is generated by elements of the form $[(x_1,\ldots, x_d, a)] - [(p_1, \ldots, p_d, 0)]$ with $x_i \in C_i(k)$ and $a \in A(k)$ (recall that $k$ is algebraically closed). We show that this is an element of $F^1_{\Psi}\ch_0(X)$ by induction on the number of $x_i$ which are not equal to $p_i$. For the base case assume that $x_i = p_i$ for $1\le i \le d$. Then we have that 
     \[
        \Psi_1'(\{(0,\ldots, 0, a)\}_{k/k}) = [(p_1, \ldots, p_d, a)] - [(p_1, \ldots, p_d, 0)].
     \]
     Now assume that for some $1 \le t \le d$, and for some $1\le i_1 < \ldots < i_t \le d$ $x_{i_1} = p_{i_1}, \ldots x_{i_t} = p_{i_t}$. Then writing out the definition of 
     \[
        \Psi_{t+1}'\left(\{(0,\ldots, [x_{i_1}]-[p_{i_1}],0, \ldots, 0), \ldots, (0,\ldots, [x_{i_t}]-[p_{i_t}],0, \ldots, 0), (0,\ldots, 0, a)\}_{k/k}\right)
     \]
     we can see that there is a cycle of the form $[(p_1,\ldots, x_{i_1}, \ldots, x_{i_t}, \ldots, p_d, a)] - [(p_1, \ldots, p_d, 0)]$ and for all the terms in the remaining degree $0$ zero-cycle there are $<t$ components of the curve which are different from the $p_i$'s. So we conclude by the inductive hypothesis. 
     
    Now assume that $F^r\ch_0(X)\otimes \mathbb{Q} = F^r_{\Psi}\ch_0(X) \otimes \mathbb{Q}$. Then consider the commutative diagram
    \[
    \begin{tikzcd}
        F^r\ch_0(X)/F^{r+1}\ch_0(X) \arrow{r}{\Phi_r} & S_r(k; \underline{J_1}\times\ldots\times\underline{J_d}\times A) \\
        F^r_{\Psi}\ch_0(X)/F^{r+1}_{\Psi}\ch_0(X). \arrow{u}{q} \arrow{ur}{\overline{\Phi}_r} & 
    \end{tikzcd}
    \]
    where $q$ is the natural quotient. By the inductive hypothesis, it suffices to show that $q$ is injective after tensoring with $\mathbb{Q}$. Since $k$ is algebraically closed, $S_r(k; \underline{J_1}\times\ldots\times\underline{J_d}\times A)$ is divisible, so the top map is an isomorphism. After tensoring with $\Q$ the diagonal map becomes injective, since $\overline{\Psi}_r\circ \overline{\Phi}_r = r!$. Therefore, $q$ is injective after tensoring with $\Q$ and thus $F^{r+1}\ch_0(X)\otimes \mathbb{Q} = F^{r+1}_{\Psi}\ch_0(X) \otimes \mathbb{Q}$
\end{proof}

We also will require that our two filtrations $F^r\ch_0(X)$ and $F^r_{\Psi}\ch_0(X)$ behave well with respect to changes in the base field. If $L/k$ is an arbitrary extension, then we claim that the diagram 
\[
    \begin{tikzcd}
        \ch_0(X) \arrow{r}{\Phi_r} \arrow{d}{\pi_{L/k}^*} & S_r(k; \underline{J_1}\times \cdots \times \underline{J_d}\times A) \arrow{d}{\res_{L/k}} \\
        \ch_0(X_L) \arrow{r}{\Phi^L_r} & S_r(L; \left(\underline{J_1}\times \cdots \times \underline{J_d}\times A\right)_L)
    \end{tikzcd}
\]
commutes. Indeed if $z \in X$ is a closed point, then 
\begin{align*}
    \Phi_r^L(\pi_{L/k}^*([z])) &= \Phi_r^L\left(\sum_{w \in z\times_k \Spec L}[w]\right) \\
    &= \sum_{w \in z\times_k \Spec L}\{(\varphi_X\times L)(w), \ldots, (\varphi_X\times L)(w)\}_{k(w)/L} \\
    &= \sum_{w \in z\times_k \Spec L}\{\res_{k(w)/k(z)}(\varphi_X(z)), \ldots, \res_{k(w)/k(z)}(\varphi_X(z))\}_{k(w)/L} \\
    &= \res_{L/k}\{\varphi_X(z), \ldots, \varphi_X(z)\}_{k(z)/k}\\
    &= \res_{L/k}\Phi_r([z]).
\end{align*}
Therefore, $\pi_{L/k}^*$ respects the filtration $F^r\ch_0(X)$. 

Similarly, by the defintion of $\Psi'_r$ and the functoriality of pushforward, we get that if $L/k$ is a finite extension, then the diagram
\[
    \begin{tikzcd}
        S_r(L; \left(\underline{J_1}\times \ldots \times \underline{J_d}\times A\right)_L) \arrow{d}{\Tr_{L/k}} \arrow{r}{\Psi^{L'}_r} & \ch_0(X_L) \arrow{d}{(\pi_{L/k})_*} \\
        S_r(k; \underline{J_1}\times \cdots \times \underline{J_d}\times A) \arrow{r}{\Psi_r'} & \ch_0(X)
    \end{tikzcd}
\]
commutes. Therefore, $(\pi_{L/k})_*$ respects the filtration $F^r_{\Psi}\ch_0(X)$. 

Now we are ready to prove the proposition.
\begin{proof}
    Let $x \in F^r\ch_0(X)$. Then let $\overline{x} \in F^r\ch_0(X_{\overline{k}})$ be its image under the pullback. Since $F^r\ch_0(X_{\overline{k}})\otimes \Q = F^r_{\Psi}\ch_0(X_{\overline{k}})\otimes \Q$, we can write $x = \sum_i q_ix_i$ for some $x_i \in F^r_\Psi\ch_0(X_{\overline{k}})$. There exists some finite extension $L$ such that all the $x_i$ are defined over $L$ and $\res_{L/k}(x) = \sum_iq_ix_i$ in $\ch_0(X_L)$. Then $[L:k]x = \Tr_{L/k}\circ \res_{L/k}(x) = \sum_i q_i\Tr_{L/k}(x_i)$. Thus $x \in F_{\Psi}^r\ch_0(X)\otimes \Q$. 
\end{proof}
\section{Example: Chow Group of a Genus Two Curve and its Jacobian}\label{section: Example computation}
In this section, we show how the filtration can be used to study the Albanese kernel and produce rational equivalences in a special case. 

Suppose $k = \overline{k}$ is algebraically closed for simplicity. Let $C$ be a curve of genus $2$ over $k$ and $J$ be its Jacobian. Let the fixed point $p_1 \in C$ be a Weierstrass point and denote the embedding determined by $p_1$ by $\iota: C \to J$.  We want to study the group $\ch_0(X)$ where $X = C\times J$. Consider the filtration 
\[
    \ch_0(X) = F^0\ch_0(X) \supset F^1\ch_0(X) \supset F^2\ch_0(X)\supset F^3\ch_0(X) \supset \ldots 
\]
Since $k$ is algebraically closed, $F^2(X)$ is torsion free which was proven by Roitman \cite{rojtman1980torsion} for the torsion prime to $\text{char}(k)$ and proven for $\text{char}(k)$ torsion by Milne \cite{milne1982zero}. Since $F^4\ch_0(X) \otimes \Q = 0$ by the previous section, we get that $F^4\ch_0(X) = 0$. Therefore, by Theorem \ref{thm: main}, we have
\[
    \Z[1/3!]\otimes F^3\ch_0(X) \cong \mathbb{Z}\left[1/3!\right]\otimes S_3(k; \underline{J}\times J).
\]
This $K$-group is generated by symbols of the form
\[
    \{([y_1]-[p_1], 0), (0,[y_2]-[p_1]),(0,[y_3]-[p_1])\}
\]
and 
\[
    \{(0, [y_1]-[p_1]),(0,[y_2]-[p_1]),(0,[y_3]-[p_1])\}.
\]
Applying $\Psi_3$ to symbols of the second form, we get 
\[
    \Psi_3(\{(0, [y_1]-[p_1]),(0,[y_2]-[p_1]),(0,[y_3]-[p_1])\}) = ([a_1]-[0])\odot([a_2]-[0])\odot ([a_3]-[0])
\]
where $a_i = [y_i]-[p_i]$. However, as mentioned in Section \ref{section: finiteness of filt}, $([a_1]-[0])\odot([a_2]-[0])\odot ([a_3]-[0]) = 0$ in $\ch_0(J)$ since $\dim(J) = 2$. 

Therefore, the group $\Z[1/3!]\otimes F^3\ch_0(X)$ is generated by cycles of the form
\begin{align*}
    &[(y_1, \iota(y_2)+\iota(y_3))]-[(y_1, \iota(y_2))]-[(y_1, \iota(y_3))]+[(y_1, 0)] \\
    &-[(p_1, \iota(y_2)+\iota(y_3))]+[(p_1, \iota(y_2))]+[(p_1, \iota(y_3))]-[(p_1, 0)]
\end{align*}
where $y_1,y_2, y_3 \in C(k)$. We also claim that if $y_i = y_j$ for any $i \ne j$, then cycles of the above form are $0$ in the Chow group. To see this, it suffices to show that 
\[
    \{([y_1]-[p_1], 0), (0,[y_2]-[p_1]),(0,[y_3]-[p_1])\} = 0
\]
in the group $\mathbb{Z}\left[1/3!\right]\otimes S_3(k; \underline{J}\times J)$ if $y_i = y_j$ for $i\ne j$. Let us assume that $y_1 = y_2$. Since $p_1$ is a Weierstrass point by assumption, $[y_1]+[x]-2[p_1]$ is the divisor of a function $f \in k(C)^\times$ where $x$ is the image of $y_1$ under the hyperelliptic involution. We define three maps $g_1,g_2,g_3 : J\times J$ as follows. The map $g_1$ is $\iota\times 0$, $g_2$ is $0 \times \iota$, and $g_3$ is $0$ on the first component and the constant map to $[y_3]-[p_1]$ on the second component. Then Weil reciprocity with these three maps and the divisor $[y_1]+[x]-2[p_1]$ gives the following relation in $S_3(k; \underline{J}\times J)$.
\begin{align*}
    0 &= \{([y_1]-[p_1], 0), (0,[y_1]-[p_1]),(0,[y_3]-[p_1])\} + \{([x]-[p_1], 0), (0,[x]-[p_1]),(0,[y_3]-[p_1])\}\\
    &= \{([y_1]-[p_1], 0), (0,[y_1]-[p_1]),(0,[y_3]-[p_1])\} + \{-([y_1]-[p_1], 0), -(0,[y_1]-[p_1]),(0,[y_3]-[p_1])\} \\
    &= 2\{([y_1]-[p_1], 0), (0,[y_1]-[p_1]),(0,[y_3]-[p_1])\}. 
\end{align*}
But since we have inverted $2$ in $\mathbb{Z}\left[1/3!\right]\otimes S_3(k; \underline{J}\times J)$, $2\{([y_1]-[p_1], 0), (0,[y_1]-[p_1]),(0,[y_3]-[p_1])\} = 0$
implies $\{([y_1]-[p_1], 0), (0,[y_1]-[p_1]),(0,[y_3]-[p_1])\} = 0$.

Now let us consider the next quotient which is $F^2\ch_0(X)/F^3\ch_0(X)$. By Theorem \ref{thm: main}, we have that 
\[
    \Z\left[1/2\right]\otimes \frac{F^2\ch_0(X)}{F^3\ch_0(X)} \cong \Z\left[1/2\right]\otimes S_2(k; \underline{J}\times J).
\]
The $K$-group is generated by symbols of the form 
\[
    \{([y_1]-[p_1], 0),(0, [y_2]-[p_1])\}
\]
and 
\[
    \{(0, [y_1]-[p_1]),(0, [y_2]-[p_1])\}
\]
for $y_1,y_2 \in C(k)$. Thus, $\Z\left[1/2\right]\otimes F^2\ch_0(X)/F^3\ch_0(X)$ is generated by cycles of the form 
\[
    [(y_1,\iota(y_2))] - [(y_1, 0)]-[(p_1, \iota(y_2))]+[(p_1, 0)]
\]
and 
\[
    [(p_1,\iota(y_1)+\iota(y_2))] - [(p_1, \iota(y_1)]-[(p_1, \iota(y_2))]+[(p_1, 0)].
\]
A similar computation to the one performed above shows that for $y_1 \in C(k)$, 
\[
    [(y_1,\iota(y_1))] - [(y_1, 0)]-[(p_1, \iota(y_1))]+[(p_1, 0)] = 0
\]
in $F^2\ch_0(X)/F^3\ch_0(X)$ and in fact this cycle is trivial in $F^2\ch_0(X)$ which can be seen by looking at its image under the map $\Phi_2$. 

We also note that by the definition of $S_2(k;\underline{J}\times J)$ (Definition \ref{def: main K-group def}), we have that 
\[
    S_2(k;\underline{J}\times J) \cong K(k; J, J) \oplus S_2(k; J).
\]
Raskind and Spiess show in \cite{raskind2000milnor} that $K(k; J,J) \cong F^2\ch_0(C\times C)$ and Gazaki shows in \cite{gazaki2015filtration} that $\Z[1/2]\otimes S_2(k;J) \cong \Z[1/2]\otimes F^2\ch_0(J)$ where we take $F^2$ to denote the Albanese kernel. Therefore, the above shows that 
\[
    \Z[1/2]\otimes F^2\ch_0(X)/F^3\ch_0(X) \cong \Z[1/2]\otimes \left(F^2\ch_0(C\times C)\oplus F^2\ch_0(J)\right).
\]

This example demonstrates that the filtration allows us to break the group $F^2\ch_0(X)$ into smaller pieces and study each of them separately. The $K$-groups allow us to easily determine generators for the successive quotients. Also, the relations coming from Weil Reciprocity in the $K$-groups can help us to write down relations in the Albanese kernel $F^2\ch_0(X)$ coming from rational equivalence. 

In order to attempt to fully understand these successive quotients, considerations about the base field must be taken into account. For example, over $\C$ we know by the work of Mumford \cite{mumford1969rational}, we know that $F^2\ch_0(C\times C)$ and $F^2\ch_0(J)$ are both huge. So $F^2\ch_0(X)/F^3\ch_0(X)$ will be as well by the above. However, over $\Q$ conjectures of Beilinson and Bloch \cite{beilinson2006height} \cite{bloch1984algebraic} predict that $F^2\ch_0(X)$ should be $0$. It is easy to check by the above that if $F^2\ch_0(C\times C)$ and $F^2\ch_0(J)$ are $0$, then so is $F^2\ch_0(X)$, but neither case is known. Gazaki and Love have studied the Albanese kernel for Abelian surfaces over $\overline{\Q}$ in \cite{gazaki2023hyperelliptic}. In particular, they are able to produce many rational equivalences coming from hyperelliptic curves mapping to the abelian surface when it is isomorphic to a product of elliptic curves $E\times E'$.

\bibliography{biblio}
\bibliographystyle{alpha}

\end{document}